\newtheorem{theorem}{Theorem}
\newtheorem{proposition}[theorem]{Proposition}
\newtheorem{lemma}{Lemma}
\newtheorem{cor}{Corollary}
\newtheorem{remark}{Remark}
\renewcommand{\epsilon}{\varepsilon}
\def\N{\mathbb{N}}
\def\Z{\mathbb{Z}}
\def\R{\mathbb{R}}
\def\om{\omega}
\DeclareMathOperator{\esssup}{esssup}
\DeclareMathOperator{\essinf}{essinf}
\DeclareMathOperator{\Cov}{Cov}
\DeclareMathOperator{\BV}{BV}
\author{D. Dragi\v cevi\' c \footnote{Department of Mathematics, University of Rijeka, Rijeka Croatia. {\tt E-mail: ddragicevic@math.uniri.hr}.}\and 
	Y. Hafouta\footnote{Department of Mathematics, The University of Florida, USA. {\tt E-mail: yeor.hafuta@ufl.edu}.}}
\begin{document}

\title{Iterated invariance principle for random dynamical systems}
\maketitle

\begin{abstract}
 We prove a weak iterated invariance principle for a large class of non-uniformly expanding random dynamical systems. In addition, we give a quenched homogenization result for fast-slow systems in the case when the fast component corresponds to a uniformly expanding random system. 
 Our techniques rely on the appropriate martingale decomposition. 
\end{abstract}
\section{Introduction}

A very important discovery made in the previous century is that many chaotic  deterministic dynamical systems satisfy the central limit theorem (CLT), where the chaoticity usually corresponds to some form of hyperbolicity (uniform, nonuniform or partial).  Since then many other classical results in probability theory were extended to deterministic dynamical systems, 
including  the weak invariance principle (WIP) which represents 
the functional version of the CLT. 

More recently, there has been a growing interest in the so-called iterated weak invariance principle, which concerns the asymptotic behaviour  of random functions of the form
$
\mathbb W_{k,n}(t)=n^{-k/2}\mathbb W_{k,[nt]},
$
where
$$
\mathbb W_{k,n}=\sum_{0\leq i_1\leq i_2<\ldots <i_k<n}X_{i_1}\otimes X_{i_1}\otimes\cdots \otimes X_{i_k}
$$
for several classes of zero-mean vector-valued stationary processes $(X_j)_{j\geq 0}$. Expressions of the form $\mathbb W_{k,n}$
 are a  special type of local statistics and they arose recently 
in works related to rough
path theory, data science and machine learning (see \cite{[11], [8], [20]}).
We refer to \cite{Friz-Kifer, KM, Kifer 1, Kifer 2} for results in this direction. Note that in \cite{KM} weak convergence was obtained, while in the other papers strong approximations were obtained. Namely they concern coupling of the iterated sums  $\mathbb W_{k,n}(t)$ with their corresponding limiting Gaussian  processes with almost sure  estimates on the error terms (or estimates in $L^p$ for an appropriate $p$).

In the context of deterministic dynamical systems, the primary interest for studying the iterated weak invariance principle comes from its role in the \emph{homogenization} results for multiscale fast-slow systems, which provide sufficient conditions under which solutions of such systems converge (in an appropriate sense) towards a solution of certain stochastic differential equations. It turns out that if the iterated weak invariance principle  (associated to the fast or chaotic component of the system) holds and under appropriate moment bounds, the machinery of rough path theory yields appropriate homogenization results. This program has been initiated by I. Melbourne and collaborators, and has so far produced a number of important results (see~\cite{CFKM, CFKMZ0, CFKMZ, EGK, GM, GM1, KM, KM2, KKM} and references therein). We would also like to to refer to \cite{Friz-Kifer} for corresponding  almost sure diffusion  approximations.


From a physical point of view stationary processes are less natural, since very often external forces (or noise) are involved.
In the setup of this paper this leads to non-stationary dynamical systems which are formed by compositions of different maps.
A random dynamical system is a special case of a non-autonomous system, where the noise is modeled by a probability preserving preserving system (i.e. a stationary processes). That is, let $(\Omega,\mathcal F,\mathbb P)$ be a probability space and let $\sigma:\Omega\to \Omega$ be an invertible ergodic probability preserving transformation. Then,  the random dynamics is formed by compositions of a family of maps $T_\om$, $\om\in\Omega$  along  $\sigma$-orbit of a point $\omega$ so that the $n$-th step iterate of the system is given by 
$$
T_\omega^{(n)}=T_{\sigma^{n-1}\omega}\circ\ldots\circ T_{\sigma\om}\circ T_{\omega}.
$$
Ergodic theory of random dynamical systems has been extensively studied in the past decades, with applications to economics,  statistical
physics and meteorology (see \cite{Kifer 1986}). In recent years, a major attention was devoted to limit theorems for random expanding (or hyperbolic) systems. In this context, the process 
 $(X_j)_j$ has the form $X_j=f_{\sigma^j\omega}\circ T_\omega^{(j)}$ for a fixed $\om\in\Omega$ which belongs to a set of full $\mathbb P$-probability and for wide classes of non-uniformly expanding random dynamical systems $(T_\omega)_{\omega \in \Omega}$.  Moreover, $(f_\omega)_\omega$ is a suitable class of observables. 
For uniformly expanding systems such results  include the CLT, CLT with rates, local limit theorems and almost sure invariance principles
for $\mathbb W_{1,n}(1)$ for many classes of random expanding or hyperbolic maps. We refer the readers to \cite{BB, DFGTV1, DFGTV2, DFGTV3, DH, DH1, DH2, DH3, HK, YH YT} for a partial list of such results.

The results described above hold true for uniformly expanding/hyperbolic random dynamical systems. The CLT and related results  for non-uniformly random i.i.d hyperbolic systems were studied in \cite{ANV, ALS}. Using the independence of the maps $(T_{\sigma^j\om})_{j\geq 0}$ these results rely on the spectral gap of the associated (deterministic) annealed operator. This approach fails when the maps are not independent since then the iterates of the annealed operator are no longer related to the stochastic behaviour of the random dynamical system.

For non-independent maps and non-uniformly expanding random systems the situation is more complicated. 
Recently two approaches were developed. In \cite{DHS, DS},  a scaling approach was introduced for general ergodic random environments. The scaling condition reads  as $\text{esssup}_{\om\in\Omega}(\|f_\om\|K(\om))<\infty$ for an appropriate tempered random variable $K(\om)$. However, in that generality the sufficient conditions (related to observables) for the CLT are harder to verify since $K(\om)$ comes from Oseledets multiplicative ergodic  theorem and it is not computable. In~\cite{DHS} it is showed that in such generality some scaling condition is needed.
Recently, in~\cite{YH} and~\cite{YH 2023} a different approach  was developed for random environments $(\Omega,\mathcal F,\mathbb P,\sigma)$ with some amount of weak dependence/mixing. In particular, in \cite{YH} the second author provided explicit examples where Kifer's inducing approach \cite{Kifer 1998} yields verifiable conditions for a variety of limit theorems.

The main objective of the present paper is to establish the  iterated weak invariance principle for $\mathbb W_{2,n}(t)$, when $X_j$ has the form $X_j(x)=f_{\sigma^j\omega}\circ T_\omega^{(j)}$ for a fixed $\om\in\Omega$ which belongs to a set of full $\mathbb P$-probability and for wide classes of non-uniformly expanding random dynamical systems. We stress that all the  results are new already in the  uniformly expanding case.
Our approach relies on the martingale method for establishing limit theorems and follows closely the arguments developed in~\cite{KM}. However, we stress that the nonuniformity of dynamics (with respect to the random parameter) requires nontrivial modifications of the approach in~\cite{KM}, starting essentially with the construction of appropriate martingale decomposition.  As an application of the iterated WIP we obtain appropriate homogenization result (discussed above) for uniformly expanding random maps. In the  non-uniform case at the present moment it is unclear how to apply rough path theory since the estimates of $\|\mathbb W_{k,n}(t)-\mathbb W_{k,n}(s)\|_{L^p}$ (for $k=1,2$) are not uniform in $\om$, and so standard tightness criteria in H\"older norms needed to apply rough path theory might fail, and our result only yields the convergence of the finite dimensional distributions. We refer to Remark~\ref{failure} for details.

\section{Preliminaries}\label{P}

We begin by introducing our setup. Let $(\Omega, \mathcal F, \mathbb P)$ be a  probability space and  $\sigma \colon \Omega \to \Omega$ an invertible transformation preserving $\mathbb P$ such that the system $(\Omega, \mathcal F, \mathbb P,\sigma)$ is ergodic. 

Moreover, let $M$ be a metric space equipped with the Borel $\sigma$-algebra $\mathcal B$.
In addition, let $T_\omega\colon M\to M$, $\omega \in \Omega$ be a family of non-singular maps on $M$. Note that in principle we can consider also maps $T_\omega:\mathcal E_\omega\to \mathcal E_{\sigma\omega}$ acting on random subspaces of $M$, but for the sake of simplicity we will focus on the case when all $\mathcal E_\omega$ coincide with $M$.
 For $\omega \in \Omega$ and $n\in \N$, set
\[
T_\omega^{(n)}:=T_{\sigma^{n-1}\omega}\circ \ldots \circ T_\omega. 
\]
We assume that there exists a family $(\mu_\om)_{\om \in \Omega}$ of Borel probability measures on $M$ which is equivariant, i.e.
\begin{equation}\label{equi}
(T_\om)_*\mu_\om=\mu_{\sigma\om}, \quad \text{for $\mathbb P$-a.e $\om \in \Omega$.}
\end{equation}
Let $L_\omega$ be the transfer operator of $T_\omega$, namely for a bounded function $g$, $L_\om g$ is the density of the measure $(T_\om)_*(gd\mu_\om)$ with respect to $\mu_{\sigma\omega}$. Then $L_\omega$ satisfies the following duality relation
\begin{equation}\label{Duality}
 \int_{M} f\cdot (g\circ T_\omega)\, d\mu_\omega=\int_{M}(L_\omega f)g\, d\mu_{\sigma\omega},   
\end{equation}
for all bounded and measurable functions $f,g$ on $M$.
We assume that for $\mathbb P$-a.e. $\omega \in \Omega$, $L_\omega$ acts as a bounded linear operator on a certain Banach space $(\mathcal H, \| \cdot \|_{\mathcal H})$ consisting of real-valued observables on $M$ with the properties that $\mathcal H$ contains constant functions on $M$ and that
\[
\|\varphi\|_{L^\infty(\mu_\om)}\le \|\varphi\|_{\mathcal H}, \quad \varphi \in \mathcal H.
\]
The above requirement ensures that $\|\cdot \|_{\mathcal H}$ dominates $\| \cdot \|_{L^p(\mu_\omega)}$ for every $\omega \in \Omega$ and $1\le p\le \infty$.
Assume further that there exist random variables $K\colon \Omega \to [1, \infty)$ and $A_n\colon \Omega \to (0,\infty)$  such that
\begin{equation}\label{NewExpConv}
\left \|L_\om^{(n)}\varphi-\int_M \varphi \, d\mu_\om \right \|_{L^\infty( \mu_{\sigma^n\om})}\leq K(\om)A_n(\om)\|\varphi \|_{\mathcal H} \quad \text{for $\mathbb P$-a.e. $\omega \in \Omega$, $\varphi \in \mathcal H$ and $n\in \N$,}
\end{equation}
where 
\[
L_\omega^{(n)}:=L_{\sigma^{n-1} \omega}\circ \ldots \circ L_\omega.
\]
In addition, we require that for some $q_0\ge 4$  we have that
\begin{equation}\label{rho norm}
\sum_{j=1}^\infty \|A_j\|_{L^{q_0}(\Omega, \mathcal F, \mathbb P)}<\infty.
\end{equation}

Let us note the following simple consequence of~\eqref{NewExpConv} which gives a quenched decay of correlation result.
\begin{lemma}\label{dec}
For $\mathbb P$-a.e. $\omega \in \Omega$, $n\in \N$, $\varphi \in \mathcal H$ and $\psi \in L^1(\mu_{\sigma^n \omega})$, we have that 
\[
\bigg{\lvert} \int_M \varphi\cdot( \psi \circ T_{\om}^{(n)}) \, d\mu_{\omega} -\int_M\varphi \, d\mu_\omega \cdot \int_M \psi \,  d\mu_{\sigma^n \omega} \bigg{\rvert} \le K(\omega)A_n(\omega) \lVert \psi \rVert_{L^1(\mu_{\sigma^n \omega})}\cdot \lVert \varphi \rVert_{\mathcal H}.
\]
\end{lemma}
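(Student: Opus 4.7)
The proof is a direct and essentially one-line application of the duality relation~\eqref{Duality} combined with the convergence estimate~\eqref{NewExpConv}, so I will just describe the calculation.

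First, I would iterate the duality relation~\eqref{Duality} $n$ times to obtain
\[
\int_M \varphi \cdot (\psi \circ T_\omega^{(n)}) \, d\mu_\omega = \int_M (L_\omega^{(n)} \varphi)\, \psi \, d\mu_{\sigma^n \omega},
\]
valid for every bounded measurable $\varphi$ and $\psi$, and then extended to $\psi \in L^1(\mu_{\sigma^n\omega})$ by a standard density/approximation argument (since $\varphi \in \mathcal H$ is bounded by the assumed domination $\|\varphi\|_{L^\infty(\mu_\omega)} \le \|\varphi\|_{\mathcal H}$, so $L_\omega^{(n)}\varphi$ is bounded in $L^\infty(\mu_{\sigma^n\omega})$ by~\eqref{NewExpConv} up to an additive constant).

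Next, I would rewrite the product of integrals as
\[
\int_M \varphi \, d\mu_\omega \cdot \int_M \psi \, d\mu_{\sigma^n\omega} = \int_M \left( \int_M \varphi \, d\mu_\omega \right) \psi \, d\mu_{\sigma^n\omega},
\]
treating the first factor as a constant with respect to the integration against $\mu_{\sigma^n\omega}$. Subtracting this from the previous display yields
\[
\int_M \varphi \cdot (\psi \circ T_\omega^{(n)}) \, d\mu_\omega - \int_M \varphi \, d\mu_\omega \cdot \int_M \psi \, d\mu_{\sigma^n\omega} = \int_M \left( L_\omega^{(n)}\varphi - \int_M \varphi \, d\mu_\omega \right) \psi \, d\mu_{\sigma^n\omega}.
\]

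Finally, I would bound the right-hand side by H\"older's inequality, pairing $L^\infty(\mu_{\sigma^n\omega})$ against $L^1(\mu_{\sigma^n\omega})$, and then invoke~\eqref{NewExpConv} on the first factor to get the claimed bound $K(\omega)A_n(\omega)\|\varphi\|_{\mathcal H} \|\psi\|_{L^1(\mu_{\sigma^n\omega})}$. There is no real obstacle: the only mildly delicate point is ensuring that the iterated duality identity makes sense for $\psi \in L^1$ rather than just bounded measurable $\psi$, but this is immediate once one observes that $L_\omega^{(n)}\varphi$ is uniformly bounded thanks to~\eqref{NewExpConv} and the fact that $\mathcal H$ contains constants.
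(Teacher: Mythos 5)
Your proof is correct and takes essentially the same route as the paper: iterate the duality relation to rewrite the left integral via $L_\omega^{(n)}\varphi$, subtract the constant $\int_M \varphi\,d\mu_\omega$ inside the integral, bound by H\"older $L^\infty$–$L^1$ pairing, and invoke~\eqref{NewExpConv}. The extra remark about extending the duality identity from bounded $\psi$ to $\psi\in L^1$ by density is a small additional clarification that the paper leaves implicit.
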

\begin{proof}
We have that 
\[
\begin{split}
&\bigg{\lvert} \int_M \varphi\cdot( \psi \circ T_{\om}^{(n)}) \, d\mu_{\omega} -\int_M \varphi \, d\mu_\omega \cdot \int_M \psi \,  d\mu_{\sigma^n \omega} \bigg{\rvert} \\
&=
\bigg{\lvert}\int_M ( L_\omega^{(n)}\varphi)\psi \, d\mu_{\sigma^n\omega}-\int_M\varphi \, d\mu_\omega \cdot \int_M \psi \,  d\mu_{\sigma^n \omega} \bigg{\rvert} \\
&=\bigg{\lvert}\int_M(L_\omega^{(n)}\varphi-\int_M \varphi\, d\mu_\omega)\psi \,  d\mu_{\sigma^n \omega} \bigg{\rvert} \\
&\le \left \|L_\omega^{(n)}\varphi-\int_M\varphi\, d\mu_\omega \right \|_{L^{\infty}(\mu_{\sigma^n\om})} \cdot \lVert \psi \rVert_{L^1(\mu_{\sigma^n \omega})}.
\end{split}
\]
The desired conclusion now follows readily from~\eqref{NewExpConv}.
\end{proof}

We will also consider the skew-product transformation $\tau \colon \Omega \times M\to \Omega \times M$ defined by
\begin{equation}\label{tau}
\tau(\omega, x)=(\sigma \omega, T_\omega (x)), \quad (\omega, x)\in \Omega \times M.
\end{equation}
Let $\mu$ be a measure on $\Omega \times M$ such that
\begin{equation}\label{egmu}
\mu(A\times B)=\int_A \mu_\omega (B)\, d\mathbb P(\omega), \quad \text{for $A\in \mathcal F$ and $B\in \mathcal B$.}
\end{equation}
Then, $\mu$ is a $\tau$-invariant probability measure. In the sequel, we will assume that $\mu$ is ergodic; for sufficient conditions that ensure this we refer to Proposition~\ref{ergodicity} given in the Appendix.
We can now establish the following annealed decay of correlation result.
\begin{lemma}\label{Skew DEC}
Let $p_1, p_2, r>0$ be  such that $\frac{1}{q_0}+\frac{1}{r}+\frac{1}{p_1}+\frac{1}{p_2}\le 1$ and 
suppose that $K\in L^{r}(\Omega,\mathcal F,\mathbb P)$. Let $\Phi, \Psi \colon \Omega \times M\to \R$ be measurable maps satisfying the following conditions:
\begin{itemize}
\item either $\int_M \Phi(\omega, \cdot) \, d\mu_\omega=0$ for $\mathbb P$-a.e. $\omega \in \Omega$ or $\int_M \Psi(\omega, \cdot) \, d\mu_\omega=0$ for $\mathbb P$-a.e. $\omega \in \Omega$;
\item $\Phi (\omega, \cdot)\in \mathcal H$ and $\Psi(\omega, \cdot)\in L^1(\mu_\omega)$ for $\mathbb P$-a.e. $\omega \in \Omega$;
\item $F\in L^{p_1}(\Omega, \mathcal F, \mathbb P)$ and $G\in L^{p_2}(\Omega, \mathcal F, \mathbb P)$, where
$F(\omega):=\|\Phi(\omega, \cdot)\|_{\mathcal H}$ and $G(\omega):=\|\Psi(\omega, \cdot)\|_{L^1(\mu_\omega)}$.
\end{itemize}
Then, for $n\in \N$ we have that 
$$
\left|\int_{\Omega \times M} \Phi \cdot (\Psi \circ \tau^n)d\mu\right|\leq \|K\|_{L^{r}(\Omega, \mathcal F, \mathbb P)}\cdot \|F\|_{L^{p_1}(\Omega, \mathcal F, \mathbb P)}\cdot \|G\|_{L^{p_2}(\Omega, \mathcal F, \mathbb P)} \cdot
\|A_n\|_{L^{q_0}(\Omega,\mathcal F, \mathbb P)}.
$$ 
\end{lemma}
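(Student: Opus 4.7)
The plan is to unfold the annealed integral fiber by fiber, apply the quenched decay bound from Lemma~\ref{dec} on each fiber, and then conclude by a four-term H\"older inequality together with the $\sigma$-invariance of $\mathbb P$.

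First, using~\eqref{egmu} together with the identity $\tau^n(\omega,x)=(\sigma^n\omega, T_\omega^{(n)}x)$, I would rewrite
\[
\int_{\Omega\times M}\Phi\cdot(\Psi\circ\tau^n)\,d\mu=\int_{\Omega}\left(\int_M \Phi(\omega,x)\,\Psi(\sigma^n\omega,T_\omega^{(n)}x)\,d\mu_\omega(x)\right)d\mathbb P(\omega).
\]
For the inner integral I would apply Lemma~\ref{dec} with $\varphi=\Phi(\omega,\cdot)\in\mathcal H$ and $\psi=\Psi(\sigma^n\omega,\cdot)\in L^1(\mu_{\sigma^n\omega})$. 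Under the hypothesis that either $\int_M\Phi(\omega,\cdot)\,d\mu_\omega=0$ or $\int_M\Psi(\sigma^n\omega,\cdot)\,d\mu_{\sigma^n\omega}=0$ for $\mathbb P$-a.e.\ $\omega$ (the latter being just the $\sigma$-shift of the hypothesis on $\Psi$), the product term in Lemma~\ref{dec} vanishes in both cases and we obtain
\[
\left|\int_M \Phi(\omega,x)\,\Psi(\sigma^n\omega,T_\omega^{(n)}x)\,d\mu_\omega(x)\right|\le K(\omega)A_n(\omega)F(\omega)\,G(\sigma^n\omega),
\]
where $F,G$ are as defined in the statement.

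Integrating over $\omega$ and applying the generalized H\"older inequality to the four factors $K$, $A_n$, $F$ and $G\circ\sigma^n$, with the exponents $r$, $q_0$, $p_1$, $p_2$ (which by assumption satisfy $1/r+1/q_0+1/p_1+1/p_2\le 1$), yields
\[
\left|\int_{\Omega\times M}\Phi\cdot(\Psi\circ\tau^n)\,d\mu\right|\le \|K\|_{L^r}\cdot\|A_n\|_{L^{q_0}}\cdot\|F\|_{L^{p_1}}\cdot\|G\circ\sigma^n\|_{L^{p_2}}.
\]
Finally, since $\sigma$ preserves $\mathbb P$, $\|G\circ\sigma^n\|_{L^{p_2}(\mathbb P)}=\|G\|_{L^{p_2}(\mathbb P)}$, giving the claimed bound.

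There is no real obstacle here beyond bookkeeping; the only subtlety is noticing that when $\int \Psi(\omega,\cdot)\,d\mu_\omega=0$ we need to carry the zero-mean property through the shift by $\sigma^n$ before invoking Lemma~\ref{dec}, after which the case reduces uniformly to a single application of that lemma.
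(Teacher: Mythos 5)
Your proposal is correct and follows essentially the same route as the paper: disintegrate the annealed integral over fibers, apply Lemma~\ref{dec} pointwise in $\omega$, and finish with a four-term H\"older inequality and the $\sigma$-invariance of $\mathbb P$. The one extra remark you make (about transporting the zero-mean condition on $\Psi$ through the shift $\sigma^n$) is a correct and worthwhile clarification of a step the paper leaves implicit.
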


\begin{proof}
By Lemma~\ref{dec}, we have that 
\[
\begin{split}
\left |\int_{\Omega \times M} \Phi \cdot (\Psi \circ \tau^n)\,d\mu \right | &= \left | \int_\Omega \left(\int_M \Phi(\om, \cdot) \cdot (\Psi(\sigma^n\om, \cdot)\circ T_\om^{(n)})\, d\mu_\om\right)\, d\mathbb P(\om)\right |\\
&\le \int_\Omega K(\omega)A_n(\omega)F(\omega)G(\sigma^n \omega)\, d\mathbb P(\omega).
\end{split}
\]
Hence, since $\sigma$ preserves $\mathbb P$, the desired conclusion follows by applying the H\"older inequality.
\end{proof}

\begin{remark}\label{REM}
Let $\mathcal H^e$ denote the space consisting of all $\varphi=(\varphi_1, \ldots, \varphi_e)\colon X\to \R^e$ such that $\varphi_i \in \mathcal H$ for $1\le i\le e$. Then, $\mathcal H^e$ is a Banach space with respect to the norm 
$\lVert \varphi \rVert=\max_{1\le i\le e}\lVert \varphi_i\rVert_{\mathcal H}$.  We can now extend each $L_\omega$ to the bounded operator on $\mathcal H^e$. More precisely, for $\varphi =(\varphi_1,\ldots, \varphi_e)\in \mathcal H^e$, we set
\[
L_\omega^e \varphi=(L_\omega \varphi_1, \ldots, L_\omega \varphi_e),
\]
for $\omega \in \Omega$. Then, \eqref{NewExpConv}  immediately extends to the compositions of $L_\omega^e$.
In order  to keep the notation as simple as possible, in the rest of the paper instead of $\mathcal H^e$ and $L_\omega^e$ we will write $\mathcal H$ and   $L_\omega$, respectively.
\end{remark}

\section{The asymptotic variance, the CLT and the law of iterated logarithm (LIL)}
We  first formulate sufficient conditions under which we have (quenched) central limit theorem (CLT)  and law of iterated logarithm  (LIL) for a suitable class of observables. The following result is of independent interest but it will also play an important role in our results devoted to the iterated weak invariance principle.

\begin{theorem}\label{CLT}
Let  \eqref{rho norm} hold with some $q_0\geq 4$   and suppose that $K\in L^r(\Omega,\mathcal F, \mathbb P)$ for some $r\geq \frac{2q_0}{q_0-2}$. 
Let $u\colon \Omega \times M\to \R^e$ be a measurable map such that $u_\omega \in \mathcal H$ and $\int_M u_\omega \, d\mu_\omega=0$ for $\mathbb P$-a.e. $\omega \in \Omega$, where $u_\omega:=u(\omega, \cdot)$. In addition, suppose that the random variable 
$\omega\to \|u_\omega\|_{\mathcal H}$ belongs to $L^p(\Omega,\mathcal F, \mathbb P)$ for some $p$ such that \[\frac{1}{p}+\frac{1}r+\frac{1}{q_0}\leq \frac12 \quad \text{and} \quad  \frac{2}p+\frac{1}r+\frac{1}{q_0}\leq 1.\] 
Consider the functions
$$
S_n^\omega u=\sum_{j=0}^{n-1}u_{\sigma^j\omega}\circ T_\omega^{(j)},
$$ 
as random variables on the probability space $(M,\mu_\omega)$.  Then, the following holds:

(i) there exists a positive semi-definite matrix $\Sigma^2$ so that for $\mathbb P$-a.e. $\omega \in \Omega$ we have that
$$
\Sigma^2=\lim_{n\to \infty}\frac{1}n\text{Cov}_{\mu_\omega}(S_n^\omega u).
$$
 Moreover, $\Sigma^2$ is not  positive definite if and only if there is a unit vector $v\in \R^e$ such that 
 $v\cdot u=q-q\circ \tau$  for some measurable function $q\colon \Omega \times M\to \R$ satisfying $q\in L^2(\Omega \times M, \mu)$, where $(v\cdot u)(\omega, x)=v\cdot u(\omega, x)$, $(\omega, x)\in \Omega\times M$ and $\cdot$ denotes the scalar product on $\R^e$;
 \vskip0.1cm
  (ii) for $\mathbb P$-a.e. $\omega \in \Omega$, the sequence $S_n^\om u$ obeys the CLT, i.e.
  the sequence of random variables 
  $n^{-1/2}S_n^\om u$  converges  in distribution (on the probability space $(M,\mathcal B,\mu_\omega)$) to a zero mean multivariate Gaussian distribution whose covariance matrix is $\Sigma$;  
\vskip0.1cm
(iii) for $\mathbb P$-a.e. $\omega \in \Omega$,
we have that 
$$
S_n^\om u=O(\sqrt{n \ln\ln n}),\quad \text{$\mu_\omega$-a.s.}
$$
\end{theorem}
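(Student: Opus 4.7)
The plan is to build a quenched Gordin-type martingale--coboundary decomposition adapted to the random environment and then apply reverse-martingale CLT and LIL fiberwise. Set
$$
\chi_\omega := \sum_{n=1}^{\infty} L_{\sigma^{-n}\omega}^{(n)}u_{\sigma^{-n}\omega}.
$$
Using \eqref{NewExpConv} on the centered observable $u_{\sigma^{-n}\omega}$, the $n$-th summand is bounded in $L^\infty(\mu_\omega)$ by $K(\sigma^{-n}\omega)A_n(\sigma^{-n}\omega)\lVert u_{\sigma^{-n}\omega}\rVert_{\mathcal H}$; the hypothesis $\tfrac{1}{p}+\tfrac{1}{r}+\tfrac{1}{q_0}\le\tfrac12$, combined with H\"older's inequality and \eqref{rho norm}, gives $\sum_n\|L_{\sigma^{-n}\cdot}^{(n)}u_{\sigma^{-n}\cdot}\|_{L^2(\mu)}<\infty$, so $\chi\in L^2(\mu)$ and the defining series converges $\mathbb P$-a.s.\ in $L^\infty(\mu_\omega)$. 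A direct rearrangement yields the cohomological identity $\chi_{\sigma\omega} = L_\omega u_\omega + L_\omega\chi_\omega$, so that on setting $m_\omega := u_\omega + \chi_\omega - \chi_{\sigma\omega}\circ T_\omega$ one obtains
$$
u_\omega = m_\omega + \chi_{\sigma\omega}\circ T_\omega - \chi_\omega, \qquad L_\omega m_\omega = 0 \text{ for $\mathbb P$-a.e.\ }\omega.
$$

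Combining $L_\omega m_\omega = 0$ with the conditional-expectation identity $\mathbb E_{\mu_\omega}[\psi\circ T_\omega^{(j)}\mid (T_\omega^{(j+1)})^{-1}\mathcal B] = (L_{\sigma^j\omega}\psi)\circ T_\omega^{(j+1)}$ shows that $(m_{\sigma^j\omega}\circ T_\omega^{(j)})_{j\ge 0}$ is a reverse martingale difference sequence on $(M,\mu_\omega)$ with respect to the decreasing filtration $\mathcal F^\omega_j := (T_\omega^{(j)})^{-1}\mathcal B$, and summing the decomposition gives
$$
S_n^\omega u = M_n^\omega + \chi_{\sigma^n\omega}\circ T_\omega^{(n)} - \chi_\omega, \qquad M_n^\omega := \sum_{j=0}^{n-1} m_{\sigma^j\omega}\circ T_\omega^{(j)}.
$$
For (i), orthogonality of martingale differences eliminates cross terms, while the coboundary remainder contributes a $o(n)$ correction in $L^2(\mu_\omega)$ since $\|\chi_{\sigma^n\omega}\|_{L^2(\mu_{\sigma^n\omega})}^2/n\to 0$ $\mathbb P$-a.s.\ by the SLLN applied to the $L^1(\mathbb P)$ function $\omega\mapsto \lVert\chi_\omega\rVert_{L^2(\mu_\omega)}^2$. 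Birkhoff for $(\tau,\mu)$ applied to $\omega\mapsto \int m_\omega\otimes m_\omega\, d\mu_\omega$ then identifies the limit as $\Sigma^2 = \int m\otimes m\, d\mu$ for $\mathbb P$-a.e.\ $\omega$. The degeneracy criterion follows from $v^\top\Sigma^2 v = \int(v\cdot m)^2 d\mu$: if $v\cdot m=0$ $\mu$-a.s., the decomposition rewrites $v\cdot u = q\circ\tau - q$ with $q = v\cdot\chi\in L^2(\mu)$; the converse is immediate from the coboundary form.

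For (ii) and (iii), apply Cram\'er--Wold and the reverse-martingale CLT and LIL to $M_n^\omega/\sqrt n$ on each fiber $(M,\mu_\omega)$. The conditional-variance stability $\tfrac1n\sum_{j=0}^{n-1}\mathbb E_{\mu_\omega}[(v\cdot m_{\sigma^j\omega}\circ T_\omega^{(j)})^2\mid\mathcal F^\omega_{j+1}]\to v^\top\Sigma^2 v$ holds $\mu_\omega$-a.s.\ for $\mathbb P$-a.e.\ $\omega$ by Birkhoff for $(\tau,\mu)$ applied to $(\omega,x)\mapsto L_{\sigma^{-1}\omega}((v\cdot m_{\sigma^{-1}\omega})^2)(x)$. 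Lindeberg is handled by truncating $m$ and using $m\in L^2(\mu)$. The coboundary term is negligible because $|\chi_{\sigma^n\omega}\circ T_\omega^{(n)}|/\sqrt n\to 0$ $\mu_\omega$-a.s., which follows from $\|\chi_{\sigma^n\omega}\|^2_{L^2(\mu_{\sigma^n\omega})}/n\to 0$ a.s., Markov and Borel--Cantelli on fibers; this is already strong enough to absorb the additional $\sqrt{\log\log n}$ needed for the LIL.

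The main obstacle is passing from the naturally annealed setting--where Birkhoff, Borel--Cantelli and the standard martingale limit theorems apply cleanly on $(\Omega\times M,\mu)$--to the quenched conclusions on a single fiber $(M,\mu_\omega)$ for $\mathbb P$-almost every $\omega$. This requires a careful interplay between Fubini and the ergodicity of $\tau$, together with handling the fact that $\|\chi_\omega\|_{L^\infty(\mu_\omega)}$ and $\|m_\omega\|_{L^\infty(\mu_\omega)}$ are a.s.\ finite but not uniformly bounded in $\omega$; the precise integrability budget in the hypotheses is what ensures $\chi$, $m$, and $u$ simultaneously lie in $L^2(\mu)$ with enough slack to execute the Lindeberg truncation in an $\omega$-dependent manner.
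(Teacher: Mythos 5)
Your proof takes a genuinely different route from the paper. The paper proves Theorem~\ref{CLT} by reducing to the scalar case via polarization and the Cram\'er--Wold device, and then invoking Kifer's~\cite[Theorem~2.3]{Kifer 1998} with the trivial inducing set $Q=\Omega$; the only work is checking three summability/integrability conditions (that $\omega\mapsto\|u_\omega\|_{\mathcal H}$ is in $L^2(\mathbb P)$, that $\sum_n|\mathbb E_{\mu_\omega}[u_\omega\cdot u_{\sigma^n\omega}\circ T_\omega^{(n)}]|$ has finite $L^1(\mathbb P)$-norm, and that $\sum_n\mathbb E_{\mu_\omega}(|L_{\sigma^{-n}\omega}^{(n)}u_{\sigma^{-n}\omega}|)$ has finite $L^2(\mathbb P)$-norm), each of which follows directly from Lemma~\ref{dec} and H\"older. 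You instead build the Gordin-type decomposition $u_\omega=m_\omega+\chi_{\sigma\omega}\circ T_\omega-\chi_\omega$ with $L_\omega m_\omega=0$, which is exactly the object the paper constructs later in Section~4 (equations~\eqref{chi}--\eqref{mdec}, Lemmas~\ref{chi bound}--\ref{m lemma}) but there for the iterated-WIP machinery, not for Theorem~\ref{CLT}. You then run the fiberwise reverse-martingale CLT and LIL from scratch. This is essentially unrolling what Kifer's theorem does internally: it buys self-containedness (no dependence on~\cite{Kifer 1998}) at the cost of having to justify the quenched Lindeberg condition and a reverse-martingale LIL under the integrability budget $s=\bigl(\tfrac1p+\tfrac1r+\tfrac1{q_0}\bigr)^{-1}\ge 2$.

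Two points are left implicit that you should spell out. For Lindeberg, the key observation is that with $g(\omega):=\|m_\omega\|_{L^\infty(\mu_\omega)}^2\in L^1(\mathbb P)$, Birkhoff gives $g(\sigma^n\omega)=o(n)$ a.s., so for every $\epsilon>0$ only finitely many terms in $\frac1n\sum_{j<n}\mathbb E_{\mu_\omega}\bigl[(m_{\sigma^j\omega}\circ T_\omega^{(j)})^2\mathbf 1_{\{\cdot>\epsilon\sqrt n\}}\bigr]$ are nonzero (and the Heyde--Scott type summability condition for the LIL is satisfied for the same reason); your phrase ``truncating $m$'' is too brief to convey this. Second, for the conditional-variance stability you write ``Birkhoff for $(\tau,\mu)$'' applied to $\omega\mapsto\int m_\omega\otimes m_\omega\, d\mu_\omega$ in part~(i): since that function depends only on $\omega$, it is Birkhoff for $(\sigma,\mathbb P)$; for the conditional variances in part~(ii) you do need $(\tau,\mu)$ and then a Fubini argument to pass from ``$\mu$-a.e.'' to ``for $\mathbb P$-a.e.\ $\omega$, $\mu_\omega$-a.e.'', which is the crux of the quenched statement and deserves to be stated explicitly rather than buried in a parenthetical. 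Beyond these expository gaps, the argument is sound and reaches the same conclusion as the paper by a different road.
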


\begin{proof}
We first claim that  it is enough to prove the theorem in the one-dimensional case when $e=1$. Indeed, suppose that the result holds for $e=1$. Then, the third assertion follows by applying it to $u^j$ for each $1\le j\le e$, where $u=(u^1, \ldots, u^e)$.  The proof of the first assertion also follows from the one-dimensional case. Indeed, for a real-valued function $\tilde u$ satisfying the conditions of the theorem,  set
$$
\Sigma^2(\tilde u)=\lim_{n\to\infty}\frac 1n\text{Var}(S_n^\omega \tilde u).
$$
Define $\Sigma_{i,j}^2=\frac12(\Sigma^2(u^{i}+u^{j})-\Sigma^2(u^{i})-\Sigma^2(u^{j}))$  for $1\le i, j \le e$ and 
let $\Sigma^2=(\Sigma_{i,j}^2)_{1\leq i,j\leq e}$  Then, for $\mathbb P$-a.e. $\omega \in \Omega$ we have that
$$
\Sigma^2=\lim_{n\to \infty}\frac{1}n\text{Cov}_{\mu_\omega}(S_n^\omega u),
$$
and in addition 
$$
v^t\Sigma^2 v=\lim_{n\to\infty}\text{Var}(S_n^\omega (v\cdot u)),
$$
for every $v\in \R^e$.
Thus $\Sigma^2$ is not  positive  definite if and only if  the function $v\cdot u$ is a coboundary for some unit vector $v$. This shows that the first assertion of the theorem follows from the scalar case. To derive the second assertion (CLT), it is enough to show that all linear combinations of the finite dimensional distributions  converge to a zero-mean normal random variable with variance  $v^t\Sigma v$. However, this follows from the CLT in the scalar case applied to the function $v\cdot u$.

Let us now prove the theorem in the case when $e=1$.
Our goal is to apply~\cite[Theorem 2.3]{Kifer 1998} with the trivial set $Q=\Omega$, namely when there is no actual inducing involved. This requires us to verify the following three conditions:

\begin{equation}\label{Ver1 2}
c\in L^2(\Omega, \mathcal F, \mathbb P), \quad \text{where $c(\omega):=\|u_\omega\|_{\mathcal H}$ for $\omega \in \Omega$,}
\end{equation}

\begin{equation}\label{Ver2 2}
\left\|\sum_{n=0}^\infty|\mathbb E_{\mu_\omega}[u_\om\cdot u_{\sigma^n\om}\circ T_\omega^{(n)}]|\right\|_{L^1(\Omega, \mathcal F,\mathbb P)}<\infty
\end{equation}
and
\begin{equation}\label{Ver3 2}
\left\|\sum_{n=0}^{\infty}\mathbb E_{\mu_\omega}(|L_{\sigma^{-n}\omega}^{(n)} u_{\sigma^{-n}\omega}|)\right\|_{L^2(\Omega, \mathcal F, \mathbb P)}<\infty.
\end{equation}   
Condition \eqref{Ver1 2} follows from the assumptions of the theorem. 
To show that condition \eqref{Ver2 2} is in force, let us fix some $n\in \N$. We simplify the notation by writing $\|\cdot \|$ instead of $\|\cdot\|_{\mathcal H}$. We first note that Lemma~\ref{dec} gives that 
$$
|\mathbb E_{\mu_\omega}[u_\omega\cdot u_{\sigma^n\omega}\circ T_\omega^{(n)}]|\leq  K(\omega)A_n(\omega)\|u_\omega\| \cdot \|u_{\sigma^n\omega}\|_{L^1(\mu_{\sigma^n\omega})}, \quad \text{{for $\mathbb P$-a.e. $\omega \in \Omega$.}}
$$
Therefore, by the H\"older inequality and the $\sigma$-invariance of $\mathbb P$,
$$
\left\||\mathbb E_{\mu_\omega}[u_\omega\cdot u_{\sigma^n\omega}\circ T_\omega^{(n)}]|\right\|_{L^1(\Omega, \mathcal F, \mathbb P)}\leq \|c\|_{L^p(\Omega, \mathcal F,\mathbb P)}^2\|K\|_{L^r(\Omega, \mathcal F, \mathbb P)}\left\|A_n\right\|_{L^{q_0}(\Omega, \mathcal F,\mathbb P)}.
$$
Thus, \eqref{rho norm} gives that 
$$
\left\|\sum_{n=1}^\infty|\mathbb E_{\mu_\omega}[u_\om\cdot  u_{\sigma^n\om}\circ T_\omega^n]|\right\|_{L^1(\Omega, \mathcal F, \mathbb P)}\leq 
\|c\|_{L^p(\Omega, \mathcal F, \mathbb P)}^2\|K\|_{L^r(\Omega, \mathcal F, \mathbb P)}\sum_{n= 1}^\infty \left\|A_n\right\|_{L^{q_0}(\Omega, \mathcal F, \mathbb P)}<\infty.
$$
Hence, \eqref{Ver2 2} holds.
Next, we verify \eqref{Ver3 2}. We have  (see~\eqref{NewExpConv}) that
$$
\left|\mathbb E_{\mu_\omega}(|L_{\sigma^{-n}\omega}^{(n)} u_{\sigma^{-n}\omega}|)\right|\leq A_n(\sigma^{-n}\omega)c(\sigma^{-n}\omega)K(\sigma^{-n}\om), \quad \text{for $\mathbb P$-a.e. $\omega \in \Omega$.}
$$
Thus, by the H\"older inequality  and the $\sigma$-invariance of $\mathbb P$,  we see that
$$
\left\|\sum_{n=1}^{\infty}\mathbb E_{\mu_\omega}(|L_{\sigma^{-n}\omega}^{(n)} u_{\sigma^{-n}\omega}|)\right\|_{L^2(\Omega, \mathcal F, \mathbb P)}\leq 
\|c\|_{L^p(\Omega, \mathcal F, \mathbb P)}\|K\|_{L^r(\Omega,\mathcal F,\mathbb P)}\sum_{n= 1}^\infty \|A_n\|_{L^{q_0}(\Omega, \mathcal F, \mathbb P)}<\infty,
$$
which yields~\eqref{Ver3 2}.
\end{proof}

\section{Martingale decomposition}
Let $v\colon \Omega \times M\to \R^e$ be a measurable map satisfying the following properties:
\begin{itemize}
\item for $\mathbb P$-a.e. $\omega \in \Omega$, \begin{equation}\label{spaceH} v_\omega:=v(\omega, \cdot)\in \mathcal H;\end{equation}
\item for $\mathbb P$-a.e. $\omega \in \Omega$, \begin{equation}\label{centering}
\int_M v_\omega \, d\mu_\omega=0.
\end{equation}
\end{itemize}
The first requirement says (see Remark~\ref{REM}) that the coordinate functions of $v_\omega$ belong to $\mathcal H$, while the second requirement implies that our observable $v$ is fiberwise centered with respect to the family $(\mu_\omega)_{\omega \in \Omega}$ of equivariant measures (see~\eqref{equi}).
For $\omega \in \Omega$, set
\begin{equation}\label{chi}
\chi_\omega:=\sum_{n=1}^\infty L_{\sigma^{-n} \omega}^{(n)}(v_{\sigma^{-n} \omega}),
\end{equation}
and
\begin{equation}\label{mdec}
m_\omega=v_\omega+\chi_\omega-\chi_{\sigma \omega}\circ T_\omega. 
\end{equation}
Provided that $\chi_\omega \in L^\infty (\mu_\omega)$ for $\mathbb P$-a.e. $\omega \in \Omega$, it follows  that $m_\omega \in L^\infty(\mu_\omega)$ for $\mathbb P$-a.e. $\omega \in \Omega$. In that case we can also consider $m, \chi \colon \Omega \times M\to \R^e$ given by $m(\omega, x)=m_\omega(x)$ and $\chi(\omega, x)=\chi_\omega (x)$ for $(\omega, x)\in \Omega \times M$.
Before formulating conditions that will ensure that $\chi_\omega$ is well-defined, we point out  few important observations.
\begin{lemma}
Suppose that $\chi_\omega \in L^\infty(\mu_\omega)$ for $\mathbb P$-a.e. $\omega \in \Omega$. Then, 
\begin{equation}\label{lm}
L_\omega (m_\omega)=0, \quad \text{for $\mathbb P$-a.e. $\omega \in \Omega$.}
\end{equation}
\end{lemma}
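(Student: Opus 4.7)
The plan is to apply $L_\omega$ directly to the definition of $m_\omega$ and to simplify each of the three resulting terms. First, I would handle the cocycle term: since the equivariance condition \eqref{equi} implies $L_\omega 1 = 1$ (set $f = 1$ in the duality \eqref{Duality} to see that $\int L_\omega(1) g \, d\mu_{\sigma\omega} = \int g \circ T_\omega \, d\mu_\omega = \int g \, d\mu_{\sigma\omega}$ for every bounded $g$), the general identity $L_\omega(\varphi \cdot (\psi \circ T_\omega)) = \psi \cdot L_\omega \varphi$ specialized to $\varphi = 1$ yields
\[
L_\omega(\chi_{\sigma\omega} \circ T_\omega) = \chi_{\sigma\omega}.
\]

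Next I would compute $L_\omega \chi_\omega$ by term-by-term application of $L_\omega$ to the series \eqref{chi}. Using the telescoping identity $L_\omega \circ L_{\sigma^{-n}\omega}^{(n)} = L_{\sigma^{-n}\omega}^{(n+1)}$ and reindexing $k = n+1$, one obtains
\[
L_\omega \chi_\omega = \sum_{n=1}^\infty L_{\sigma^{-n}\omega}^{(n+1)}(v_{\sigma^{-n}\omega}) = \sum_{k=2}^\infty L_{\sigma^{1-k}(\sigma\omega)}^{(k)}(v_{\sigma^{1-k}(\sigma\omega)}) = \chi_{\sigma\omega} - L_\omega v_\omega,
\]
where the last equality follows by comparing with the definition of $\chi_{\sigma\omega}$ and peeling off the $k=1$ term, which equals $L_\omega v_\omega$. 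Combining these two computations,
\[
L_\omega m_\omega = L_\omega v_\omega + L_\omega \chi_\omega - L_\omega(\chi_{\sigma\omega} \circ T_\omega) = L_\omega v_\omega + (\chi_{\sigma\omega} - L_\omega v_\omega) - \chi_{\sigma\omega} = 0,
\]
as desired.

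The only subtle point is the interchange of $L_\omega$ with the infinite sum defining $\chi_\omega$. This is justified because the series \eqref{chi} converges in $L^\infty(\mu_{\sigma^{-n}\omega})$-norm (or at least in $L^1$) by the hypothesis $\chi_\omega \in L^\infty(\mu_\omega)$ together with the exponential decay estimate \eqref{NewExpConv} and the assumption \eqref{centering}, which guarantees $\|L_{\sigma^{-n}\omega}^{(n)} v_{\sigma^{-n}\omega}\|_{L^\infty(\mu_\omega)} \le K(\sigma^{-n}\omega) A_n(\sigma^{-n}\omega) \|v_{\sigma^{-n}\omega}\|_{\mathcal H}$; since $L_\omega$ is a bounded linear operator (in fact, an $L^1$-contraction), it commutes with the sum. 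No other obstacle arises, and the identity \eqref{lm} follows for $\mathbb P$-a.e. $\omega$.
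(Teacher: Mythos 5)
Your proof is correct and follows essentially the same route as the paper: you apply $L_\omega$ to each term of~\eqref{mdec}, use the pullback identity $L_\omega(\psi\circ T_\omega)=\psi L_\omega 1=\psi$ (which the paper obtains by citing~\cite[Lemma 7]{DFGTV1}, whereas you derive it directly from duality and equivariance), and then telescope the series for $L_\omega\chi_\omega$ against $\chi_{\sigma\omega}$. The only blemish is a notational slip where you write $\sigma^{1-k}(\sigma\omega)$ instead of $\sigma^{1-k}\omega=\sigma^{-k}(\sigma\omega)$ after reindexing, but the resulting identity $L_\omega\chi_\omega=\chi_{\sigma\omega}-L_\omega v_\omega$ and the final cancellation are right.
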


\begin{proof}
By~\eqref{chi} and~\cite[Lemma 7]{DFGTV1}, we have that  
\[
\begin{split}
L_\omega(m_\omega) &=L_\omega (v_\omega)+L_\omega (\chi_\omega)-L_\omega (\chi_{\sigma \omega}\circ T_\omega)\\
&=L_\omega (v_\omega)+L_\omega (\chi_\omega)-\chi_{\sigma \omega}\\
&=L_\omega (v_\omega)+\sum_{n=1}^\infty L_{\sigma^{-n} \omega}^{(n+1)}(v_{\sigma^{-n} \omega})-\sum_{n=1}^\infty L_{\sigma^{-(n-1)} \omega}^{(n)}(v_{\sigma^{-(n-1)} \omega})\\
&=L_\omega (v_\omega)+\sum_{n=2}^\infty L_{\sigma^{-(n-1)} \omega}^{(n)}(v_{\sigma^{-(n-1)} \omega})-\sum_{n=1}^\infty L_{\sigma^{-(n-1)} \omega}^{(n)}(v_{\sigma^{-(n-1)} \omega})\\
&=0,
\end{split}
\]
for $\mathbb P$-a.e. $\omega \in \Omega$.

\end{proof}

\begin{lemma}\label{MD}
Suppose that $\chi_\omega \in L^\infty(\mu_\omega)$ for $\mathbb P$-a.e. $\omega \in \Omega$. Then, 
for $\mathbb P$-a.e. $\omega \in \Omega$ and $n\in \N$, we have that 
\begin{equation}\label{904}
\mathbb E_\omega [m_{\sigma^n \omega}\circ T_\omega^{(n)}\rvert ( T_\omega^{(n+1)})^{-1}(\mathcal B)]=0,
\end{equation}
where $\mathbb E_\omega[\psi\rvert \mathcal G]$ denotes the conditional expectation of $\psi$ with respect to the $\sigma$-algebra $\mathcal G$ and  measure $\mu_\omega$. 
\end{lemma}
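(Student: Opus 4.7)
The plan is to characterize the conditional expectation via its defining property: a random variable $\psi \in L^1(\mu_\omega)$ satisfies $\mathbb E_\omega[\psi \mid \mathcal G]=0$ if and only if $\int_M \psi\,\phi\, d\mu_\omega=0$ for every bounded $\mathcal G$-measurable test function $\phi$. Setting $\psi:=m_{\sigma^n\omega}\circ T_\omega^{(n)}$ and $\mathcal G:=(T_\omega^{(n+1)})^{-1}(\mathcal B)$, the Doob--Dynkin lemma tells us that every such $\phi$ has the form $\phi=g\circ T_\omega^{(n+1)}$ for some bounded measurable $g\colon M\to\R$. So the task reduces to showing
\[
\int_M \bigl(m_{\sigma^n\omega}\circ T_\omega^{(n)}\bigr)\cdot\bigl(g\circ T_\omega^{(n+1)}\bigr)\, d\mu_\omega=0
\]
for every such $g$.

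The calculation then proceeds by three standard moves, each already available to us. First, using $T_\omega^{(n+1)}=T_{\sigma^n\omega}\circ T_\omega^{(n)}$, the integrand can be written as $\bigl[m_{\sigma^n\omega}\cdot (g\circ T_{\sigma^n\omega})\bigr]\circ T_\omega^{(n)}$. Second, the iterated equivariance relation $(T_\omega^{(n)})_*\mu_\omega=\mu_{\sigma^n\omega}$, a direct consequence of~\eqref{equi}, converts the integral into
\[
\int_M m_{\sigma^n\omega}\cdot (g\circ T_{\sigma^n\omega})\, d\mu_{\sigma^n\omega}.
\]
Third, applying the duality~\eqref{Duality} at the base point $\sigma^n\omega$ rewrites this as $\int_M L_{\sigma^n\omega}(m_{\sigma^n\omega})\cdot g\, d\mu_{\sigma^{n+1}\omega}$, which vanishes by~\eqref{lm}.

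I do not expect a real obstacle here. The only mild subtlety is to ensure that the full-measure set on which~\eqref{lm} is invoked (at $\sigma^n\omega$ rather than at $\omega$) remains of full $\mathbb P$-measure for every $n\in\N$ simultaneously; this is immediate from the $\sigma$-invariance of $\mathbb P$ and the countability of $\N$. The conceptual content of the lemma is entirely contained in the martingale coboundary construction~\eqref{mdec} together with the identity $L_\omega(m_\omega)=0$ established just above.
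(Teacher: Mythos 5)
Your proof is correct, and it is essentially the same argument as the paper's, just unpacked. The paper invokes Lemma~6 of \cite{DFGTV1}, which states precisely the identity
\[
\mathbb E_\omega [m_{\sigma^n \omega}\circ T_\omega^{(n)}\mid ( T_\omega^{(n+1)})^{-1}(\mathcal B)]=L_{\sigma^n \omega}(m_{\sigma^n \omega})\circ T_\omega^{(n+1)},
\]
and then concludes by~\eqref{lm}. You instead re-derive that cited identity from scratch: characterize the conditional expectation by integration against bounded $\mathcal G$-measurable test functions, use the Doob--Dynkin lemma to write those as $g\circ T_\omega^{(n+1)}$, push forward along $T_\omega^{(n)}$ via the iterated equivariance relation, and finally apply the duality~\eqref{Duality} to expose $L_{\sigma^n\omega}(m_{\sigma^n\omega})$. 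This is the content hidden inside the cited lemma, so your route is more self-contained but conceptually identical; what it buys is independence from the external reference, at the cost of a few extra lines. Your remark about handling the full-measure exceptional sets uniformly over $n\in\N$ is a correct and worthwhile observation that the paper leaves tacit.
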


\begin{proof}
Using~\cite[Lemma 6]{DFGTV1}, we obtain that 
\[
\mathbb E_\omega [m_{\sigma^n \omega}\circ T_\omega^{(n)}\rvert ( T_\omega^{(n+1)})^{-1}(\mathcal B)]=L_{\sigma^n \omega}(m_{\sigma^n \omega})\circ T_\omega^{(n+1)},
\]
which in the view of~\eqref{lm} yields~\eqref{904}.
\end{proof}

\begin{remark}
Lemma \ref{MD} says that for $\mathbb P$-a.e. $\om\in \Omega$,  $(m_{\sigma^n \om}\circ T_\om^{(n)})_{n\in \N}$ is a reverse martingale difference 
with respect to the reverse filtration $(\mathcal T_\om^n)_{n\in \N}$, where $\mathcal T_\om^n=(T_\om^{(n)})^{-1}(\mathcal B)$ for $n\in \N$.
\end{remark}

We now formulate conditions  which in particular imply that $\chi_\omega$ given by~\eqref{chi} is well-defined for $\mathbb P$-a.e. $\omega \in \Omega$.
\begin{lemma}\label{chi bound}
Let $p, s, r>0$ be such that $\frac{1}{s}-\frac 1 r-\frac 1 p=\frac1 {q_0}$. Suppose that 
 $K\in L^{r}(\Omega,\mathcal F,\mathbb P)$ and $a\in L^{p}(\Omega,\mathcal F,\mathbb P)$, where $a(\omega)=\|v_\omega\|_{\mathcal H}$, $\omega \in \Omega$. Then, the random variable $\omega \mapsto
  \|\chi_\om\|_{L^\infty(\mu_\omega)}$ belongs to $L^{s}(\Omega,\mathcal F,\mathbb P)$. Moreover, for every $k\in\N$ we have 
  \begin{equation}\label{chi approx}
 \left\|\left\|\chi_\om-\sum_{j=1}^kL_{\sigma^{-j}\omega}^{(j)}(v_{\sigma^{-j} \omega})\right\|_{L^\infty(\mu_\omega)}\right\|_{L^s(\Omega,\mathcal F,\mathbb P)}\leq  \|K\|_{L^{r}(\Omega, \mathcal F, \mathbb P)}\|a\|_{L^{p}(\Omega, \mathcal F, \mathbb P)}\sum_{j=k+1}^{\infty}\|A_j\|_{L^{q_0}(\Omega, \mathcal F, \mathbb P)}. 
  \end{equation}
\end{lemma}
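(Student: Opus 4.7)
The proof proposal is as follows.

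The plan is to control the tail of the series \eqref{chi} termwise, using the centering of $v_\omega$ together with the quenched decay estimate \eqref{NewExpConv}, and then to take $L^s$-norms via Minkowski's inequality and H\"older's inequality. More precisely, since $\int_M v_{\sigma^{-j}\omega}\, d\mu_{\sigma^{-j}\omega}=0$ by \eqref{centering}, the estimate \eqref{NewExpConv} applied at the point $\sigma^{-j}\omega$ yields
\[
\bigl\|L_{\sigma^{-j}\omega}^{(j)}(v_{\sigma^{-j}\omega})\bigr\|_{L^\infty(\mu_\omega)}\le K(\sigma^{-j}\omega)A_j(\sigma^{-j}\omega)a(\sigma^{-j}\omega)
\]
for $\mathbb P$-a.e.\ $\omega$ and every $j\in\N$. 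This pointwise bound is the workhorse of the argument.

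Next, I apply the triangle inequality in $L^\infty(\mu_\omega)$ to the tail of the defining series, which gives
\[
\left\|\chi_\omega-\sum_{j=1}^{k}L_{\sigma^{-j}\omega}^{(j)}(v_{\sigma^{-j}\omega})\right\|_{L^\infty(\mu_\omega)}\le \sum_{j=k+1}^{\infty}K(\sigma^{-j}\omega)A_j(\sigma^{-j}\omega)a(\sigma^{-j}\omega).
\]
Taking the $L^s(\mathbb P)$-norm and using Minkowski's inequality reduces the problem to estimating $\|K(\sigma^{-j}\cdot)A_j(\sigma^{-j}\cdot)a(\sigma^{-j}\cdot)\|_{L^s(\mathbb P)}$ term by term. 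By the $\sigma$-invariance of $\mathbb P$ this equals $\|K\cdot A_j\cdot a\|_{L^s(\mathbb P)}$, and now I apply H\"older's inequality with exponents $r$, $q_0$, $p$, which is legitimate precisely because $\tfrac{1}{r}+\tfrac{1}{q_0}+\tfrac{1}{p}=\tfrac{1}{s}$. This gives
\[
\|K\cdot A_j\cdot a\|_{L^s(\mathbb P)}\le \|K\|_{L^r(\mathbb P)}\|a\|_{L^p(\mathbb P)}\|A_j\|_{L^{q_0}(\mathbb P)},
\]
and summing over $j\ge k+1$ yields the claimed estimate \eqref{chi approx}.

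The first assertion, that $\omega\mapsto\|\chi_\omega\|_{L^\infty(\mu_\omega)}$ belongs to $L^s(\Omega,\mathcal F,\mathbb P)$, follows by applying exactly the same chain of inequalities starting from the full series (i.e.\ taking $k=0$), and noting that the resulting bound $\|K\|_{L^r}\|a\|_{L^p}\sum_{j\ge 1}\|A_j\|_{L^{q_0}}$ is finite by the summability hypothesis \eqref{rho norm}; in particular this simultaneously shows $\chi_\omega\in L^\infty(\mu_\omega)$ for $\mathbb P$-a.e.\ $\omega$, so the series \eqref{chi} indeed converges in $L^\infty(\mu_\omega)$.

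I do not foresee a genuine obstacle here: the argument is essentially bookkeeping. The one point that requires minor care is verifying that the chosen H\"older exponents match the hypothesis $\tfrac{1}{s}-\tfrac{1}{r}-\tfrac{1}{p}=\tfrac{1}{q_0}$, and that Minkowski may be applied before H\"older (both are routine since all integrands are nonnegative). Everything else is a direct consequence of \eqref{NewExpConv}, \eqref{centering}, the $\sigma$-invariance of $\mathbb P$, and \eqref{rho norm}.
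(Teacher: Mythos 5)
Your proposal is correct and follows essentially the same route as the paper: apply \eqref{NewExpConv} together with the centering \eqref{centering} at the point $\sigma^{-j}\omega$ to get the termwise $L^\infty(\mu_\omega)$ bound, then take $L^s(\mathbb P)$-norms using Minkowski, $\sigma$-invariance, and H\"older with the exponent relation $\tfrac1r+\tfrac1{q_0}+\tfrac1p=\tfrac1s$. The only (shared, minor) caveat is that the triangle/Minkowski step implicitly uses $s\ge 1$, which is not stated in the hypotheses but holds in all applications in the paper.
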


\begin{proof}
By \eqref{NewExpConv}, \eqref{centering} and~\eqref{chi}, we have that 
$$
\|\chi_\om\|_{L^\infty(\mu_\om)} \leq \sum_{j=1}^{\infty}K(\sigma^{-j}\om)A_j(\sigma^{-j}\omega)\|v_{\sigma^{-j}\om}\|_{\mathcal H},
$$
for $\mathbb P$-a.e. $\omega \in \Omega$. 
Thus, by the H\"older inequality  and since  $\sigma$ preserves $\mathbb P$,  we have that 
$$
\left\|\|\chi_{\om}\|_{L^\infty(\mu_\omega)}\right\|_{L^{s}(\Omega, \mathcal F, \mathbb P)}\leq \|K\|_{L^{r}(\Omega, \mathcal F, \mathbb P)}\cdot \|a\|_{L^{p}(\Omega, \mathcal F, \mathbb P)}\sum_{j=1}^{\infty}\|A_j(\omega)\|_{L^{q_0}(\Omega, \mathcal F, \mathbb P)}.
$$
It follows from~\eqref{rho norm} that $\omega \mapsto \|\chi_\omega \|_{L^\infty(\mu_\omega)}$ belongs to $L^s(\Omega, \mathcal F, \mathbb P)$.
The proof of \eqref{chi approx} is analogous.
\end{proof}

\begin{cor}\label{cor m}
Suppose that the assumptions of Lemma \ref{chi bound} hold with $s\geq 2$. Then,  all the conclusions of Theorem \ref{CLT} hold true for $u=m$ if they hold  for  $u=v$.
\end{cor}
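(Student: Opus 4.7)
The plan is to exploit the telescoping structure built into the martingale decomposition~\eqref{mdec} in order to write $S_n^\omega m$ as $S_n^\omega v$ plus a boundary term whose contribution disappears after normalization by $\sqrt n$; each of the three conclusions of Theorem~\ref{CLT} for $u=m$ is then deduced from the corresponding conclusion for $u=v$. Concretely, iterating the identity $m_{\sigma^j\omega} = v_{\sigma^j\omega} + \chi_{\sigma^j\omega} - \chi_{\sigma^{j+1}\omega}\circ T_{\sigma^j\omega}$, composing with $T_\omega^{(j)}$, and summing over $0\le j\le n-1$ yields
\begin{equation*}
S_n^\omega m = S_n^\omega v + \chi_\omega - \chi_{\sigma^n\omega}\circ T_\omega^{(n)},
\end{equation*}
so that $\|S_n^\omega m - S_n^\omega v\|_{L^\infty(\mu_\omega)} \le K_\chi(\omega) + K_\chi(\sigma^n\omega)$, where $K_\chi(\omega):= \|\chi_\omega\|_{L^\infty(\mu_\omega)}$.

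Next I would invoke Lemma~\ref{chi bound} with $s\ge 2$ to get $K_\chi \in L^s(\Omega, \mathcal F, \mathbb P)\subset L^2(\Omega, \mathcal F, \mathbb P)$. Since $\sigma$ preserves $\mathbb P$, a standard Borel--Cantelli argument applied to the integrable random variable $K_\chi^2$ yields $K_\chi(\sigma^n\omega)/\sqrt n \to 0$ for $\mathbb P$-a.e.\ $\omega$, and hence $\|S_n^\omega m - S_n^\omega v\|_{L^\infty(\mu_\omega)} = o(\sqrt n)$ for such $\omega$. Slutsky's theorem then transports (ii) from $v$ to $m$, and (iii) follows at once because $o(\sqrt n)\subset o(\sqrt{n\ln\ln n})$. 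For the covariance limit in (i), expanding $\text{Cov}_{\mu_\omega}(S_n^\omega m)$ by bilinearity and controlling the cross terms by the Cauchy--Schwarz inequality together with $\text{Var}_{\mu_\omega}(S_n^\omega v)=O(n)$ shows that $\frac1n\text{Cov}_{\mu_\omega}(S_n^\omega m)$ and $\frac1n\text{Cov}_{\mu_\omega}(S_n^\omega v)$ share the same limit $\Sigma^2$.

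It remains to handle the coboundary characterization in (i): a unit vector $\mathbf v\in\R^e$ should make $\mathbf v\cdot v$ an $L^2(\mu)$-coboundary for $\tau$ iff it makes $\mathbf v\cdot m$ an $L^2(\mu)$-coboundary for $\tau$. This is where the assumption $s\ge 2$ is essential: by Fubini and Lemma~\ref{chi bound},
\begin{equation*}
\int_{\Omega\times M}|\chi|^2\, d\mu \le \int_\Omega K_\chi(\omega)^2\, d\mathbb P(\omega) <\infty,
\end{equation*}
so $\chi\in L^2(\Omega\times M,\mu)$, and the identity $m=v+\chi-\chi\circ\tau$ then sets up a bijective correspondence $q\leftrightarrow q+\mathbf v\cdot\chi$ between the relevant $L^2$-coboundary potentials. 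The main (and essentially only) subtle point is verifying this coboundary equivalence inside $L^2(\mu)$ rather than merely modulo a measurable function; every other step reduces to Slutsky's theorem, Cauchy--Schwarz, or a Borel--Cantelli argument, all of which are routine.
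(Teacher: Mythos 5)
Your proof is correct and follows the paper's approach: telescope $S_n^\omega m - S_n^\omega v = \chi_\omega - \chi_{\sigma^n\omega}\circ T_\omega^{(n)}$ and use the $L^2$ integrability of $\omega\mapsto\|\chi_\omega\|_{L^\infty(\mu_\omega)}$ (Lemma~\ref{chi bound} with $s\ge 2$) to make the boundary term $o(\sqrt n)$ almost surely. The paper gets the $o(\sqrt n)$ bound via Birkhoff rather than Borel--Cantelli and leaves the transfer of conclusions (i)--(iii) implicit behind ``and the corollary follows,'' but your spelled-out details --- in particular the $L^2(\mu)$-coboundary correspondence $q\mapsto q+\mathbf v\cdot\chi$ for part (i) --- are exactly what is needed and do not amount to a different argument.
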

\begin{proof}
By Lemma~\ref{chi bound} we have that 
 $\om\to \|\chi_\om\|_{L^\infty(\mu_\om)}$ belongs to $L^2(\Omega,\mathcal F,\mathbb P)$.  Hence,  Birkhoff's ergodic theorem implies that  $\|\chi_{\sigma^n\omega}\|_{L^\infty(\mu_{\sigma^n\omega})}=o(n^{1/2})$ for $\mathbb P$-a.e. $\omega \in \Omega$. Thus (see~\eqref{mdec}), 
 $$
\|S_n^\om v-S_n^\om m\|_{L^\infty(\mu_\om)}=\|\chi_\omega-\chi_{\sigma^n \omega}\circ T_\omega^{(n)}\|_{L^\infty(\mu_\om)}=
o(n^{1/2}), \quad  \text{$\mathbb P$-a.s.,}
 $$
 and the corollary follows.
\end{proof}

\begin{cor}\label{Skew chi cor}
Under the assumptions of Lemma~\ref{chi bound} with $s\ge 2$,
 for all $1\leq \gamma,\beta\leq e$ we have    that 
 $$
\lim_{n\to\infty}\int_{\Omega\times M} m^\beta \cdot (\chi^\gamma\circ\tau^n)\, d\mu=0.
 $$
\end{cor}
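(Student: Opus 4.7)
The plan is to show the much stronger statement that, for every $n\geq 1$, the integral $\int_{\Omega\times M} m^\beta \cdot (\chi^\gamma \circ \tau^n)\, d\mu$ is \emph{identically zero} rather than merely vanishing in the limit. The mechanism is the martingale identity~\eqref{lm}, namely $L_\omega(m_\omega)=0$, which after iterated duality kills the fiberwise correlations outright.

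First, I would verify that the integral is well-defined. Lemma~\ref{chi bound} gives $\omega \mapsto \|\chi_\omega\|_{L^\infty(\mu_\omega)} \in L^s(\Omega,\mathcal F,\mathbb P)$ with $s\geq 2$. Since $\|v_\omega\|_{L^\infty(\mu_\omega)} \leq \|v_\omega\|_{\mathcal H}$ lies in $L^p\subset L^2$, and since the equivariance~\eqref{equi} gives $\|\chi_{\sigma\omega}\circ T_\omega\|_{L^\infty(\mu_\omega)} \leq \|\chi_{\sigma\omega}\|_{L^\infty(\mu_{\sigma\omega})}$, the decomposition~\eqref{mdec} yields $\omega \mapsto \|m_\omega\|_{L^\infty(\mu_\omega)}\in L^2(\Omega,\mathcal F,\mathbb P)$. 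Hence $m^\beta, \chi^\gamma \in L^2(\Omega\times M,\mu)$, so the product is in $L^1(\mu)$ for every $n$.

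Next, I would disintegrate via~\eqref{egmu} to obtain
$$
\int_{\Omega\times M} m^\beta \cdot (\chi^\gamma \circ \tau^n)\, d\mu
= \int_\Omega \int_M m^\beta_\omega \cdot \bigl(\chi^\gamma_{\sigma^n\omega} \circ T_\omega^{(n)}\bigr)\, d\mu_\omega\, d\mathbb P(\omega),
$$
and iterate the duality relation~\eqref{Duality} $n$ times (valid since $m^\beta_\omega$ and $\chi^\gamma_{\sigma^n\omega}$ are both in $L^\infty$) to transfer the composition across:
$$
\int_M m^\beta_\omega \cdot \bigl(\chi^\gamma_{\sigma^n\omega} \circ T_\omega^{(n)}\bigr)\, d\mu_\omega
= \int_M L_\omega^{(n)}(m^\beta_\omega)\cdot \chi^\gamma_{\sigma^n\omega}\, d\mu_{\sigma^n\omega}.
$$
Because $L_\omega$ acts coordinate-wise on $\mathcal H^e$ (Remark~\ref{REM}), the identity~\eqref{lm} gives $L_\omega(m^\beta_\omega)=0$ for $\mathbb P$-a.e.\ $\omega$, and therefore $L_\omega^{(n)}(m^\beta_\omega)=0$ for every $n\geq 1$. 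Consequently the inner integral—and hence the whole integral over $\Omega\times M$—is exactly zero for every $n\geq 1$, which gives the stated limit trivially.

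There is no substantial obstacle; the only step requiring care is the integrability bookkeeping that legitimizes the fiberwise iterated duality, and that is supplied directly by Lemma~\ref{chi bound}. The proof does not need Lemma~\ref{Skew DEC} (which would in any case be awkward here, since $\chi_\omega$ is not asserted to lie in $\mathcal H$), and it highlights that the corollary is really a structural consequence of the martingale decomposition rather than a quantitative decay-of-correlations statement.
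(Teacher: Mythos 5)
Your proof is correct, and it establishes more than the corollary asserts: the integral $\int_{\Omega\times M} m^\beta \cdot (\chi^\gamma\circ\tau^n)\, d\mu$ is \emph{identically zero} for every $n\geq 1$, not just vanishing in the limit. The mechanism is sound: after disintegrating via~\eqref{egmu} and iterating the duality~\eqref{Duality} $n$ times (which is legitimate for $m_\omega^\beta\in L^\infty(\mu_\omega)$ and $\chi^\gamma_{\sigma^n\omega}\in L^\infty(\mu_{\sigma^n\omega})$, since the transfer operator is defined on $L^1$ as the density of the pushforward), one gets
\[
\int_M m^\beta_\omega \cdot \bigl(\chi^\gamma_{\sigma^n\omega}\circ T_\omega^{(n)}\bigr)\, d\mu_\omega
= \int_M \bigl(L_\omega^{(n)} m^\beta_\omega\bigr)\, \chi^\gamma_{\sigma^n\omega}\, d\mu_{\sigma^n\omega},
\]
and~\eqref{lm} gives $L_\omega m^\beta_\omega = 0$ $\mathbb P$-a.s., hence $L_\omega^{(n)} m^\beta_\omega = L_{\sigma^{n-1}\omega}\circ\cdots\circ L_{\sigma\omega}(0) = 0$. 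Your integrability check (so that the disintegration and iterated duality are legal) is also in order: Lemma~\ref{chi bound} with $s\geq 2$ puts both $\|\chi_\omega\|_{L^\infty(\mu_\omega)}$ and $\|m_\omega\|_{L^\infty(\mu_\omega)}$ in $L^2(\Omega,\mathcal F,\mathbb P)$.

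This is a genuinely different route from the paper's. The paper's proof substitutes $m^\beta = v^\beta + \chi^\beta - \chi^\beta\circ\tau$, reduces (via Lemma~\ref{Skew DEC} and $\tau$-invariance of $\mu$) to showing $\int \chi^\beta (\chi^\gamma\circ\tau^n)\,d\mu\to 0$, and then approximates $\chi$ by the truncation $E_k = \sum_{j=1}^k L_{\sigma^{-j}\omega}^{(j)} v_{\sigma^{-j}\omega}$, splitting the integral into a tail term controlled by~\eqref{chi approx} and a finite-rank term handled again by Lemma~\ref{Skew DEC}. That route treats the statement as a quantitative decay-of-correlations fact; yours exposes it as a structural consequence of the reverse-martingale property built into~\eqref{mdec}. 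Your argument is shorter, avoids the diagonal $k,n$ bookkeeping entirely, and even streamlines the step in the proof of Theorem~\ref{T1} where the corollary is invoked: the term $\int m^\beta(\chi^\gamma\circ\tau^{n+1})\,d\mu$ in $L_n$ vanishes identically, so only $\int \chi^\beta(v^\gamma\circ\tau^n)\,d\mu$ needs Lemma~\ref{Skew DEC}.
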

\begin{proof}
    Firstly, we note that  by \eqref{mdec} and Lemma \ref{Skew DEC}, it is enough to prove that 
     $$
\lim_{n\to\infty}\int_{\Omega\times M} \chi^\beta \cdot (\chi^\gamma\circ\tau^n )\, d\mu=0.
 $$
 Let us define 
 $$
E_{k}(\omega,\cdot)=\sum_{j=1}^kL_{\sigma^{-j}\om}^{(j)}(v_{\sigma^{-j}\omega}).
 $$
 Then,
 $$
\int_{\Omega\times M} \chi^\beta \cdot (\chi^\gamma\circ\tau^n )\, d\mu=
 \int_{\Omega\times M} (\chi^\beta- E_k^\beta)\cdot (\chi^\gamma\circ\tau^n)\, d\mu+
  \int_{\Omega\times M}E_k^\beta\cdot (\chi^\gamma\circ\tau^n )\, d\mu=:I_1(k,n)+I_2(k,n).
 $$
 In order to estimate $I_1(k,n)$,  note that 
 $$
I_1(k,n)=\int_{\Omega}\int_M
(\chi^\beta_\om-E_k^\beta(\omega,\cdot)) \cdot (\chi^\gamma_{\sigma^n\omega}\circ T_\omega^{(n)})\, d\mu_\omega \, d\mathbb P(\omega),
 $$
 and therefore \eqref{chi approx} and the H\"older inequality imply that 
 \[
 \begin{split}
|I_1(k,n)| &\leq \left \| \|\chi_\omega^\beta-E_k^\beta(\omega, \cdot)\|_{L^\infty(\mu_\omega)}\right \|_{L^s(\Omega, \mathcal F, \mathbb P)} \cdot \left \| \|\chi_\omega^\gamma\|_{L^\infty (\mu_\omega)}\right \|_{L^s(\Omega, \mathcal F, \mathbb P)} \\
&\le C\sum_{j>k}\|A_j\|_{L^{q_0}(\Omega,\mathcal F,\mathbb P)},
\end{split}
\]
for some constant $C>0$. Therefore, it follows from~\eqref{rho norm} that $\sup_{n}|I_1(k,n)|\to 0$ as $k\to \infty$.
Thus, it is enough to show that for every fixed $k\in \N$ we have that 
$$
\lim_{n\to\infty}I_2(k,n)=0.
$$
However, using \eqref{Duality} we see that
\[
\begin{split}
I_2(k,n) &=\int_{\Omega}\int_ME_k^\beta(\omega, \cdot) \cdot (\chi_{\sigma^n \omega}^\gamma \circ T_\omega^{(n)})\, d\mu_\omega \, d\mathbb P(\omega)\\
&=\sum_{j=1}^k\int_{\Omega}\int_M v_{\sigma^{-j}\omega}^\beta \cdot (\chi_{\sigma^n \omega}^\gamma \circ T_{\sigma^{-j}\omega}^{(n+j)})\, d\mu_{\sigma^{-j}\omega}\, d\mathbb P(\omega)\\
&=\sum_{j=1}^k\int_{\Omega\times M}v^\beta(\chi^\gamma\circ \tau^{j+n})d\mu.
\end{split}
\]
By Lemma~\ref{Skew DEC} each one of the above summands converges to $0$ as $n\to \infty$, and the proof of the corollary is complete.
 \end{proof}

In the course of the proof of our main result, we will also need the following lemma.
\begin{lemma}\label{m lemma}
 Let  $a(\cdot)$ be as in the statement of Lemma~\ref{chi bound} and 
let $p,r>0$ be such that  $K\in L^{r}(\Omega,\mathcal F, \mathbb P)$,  $a\in L^p(\Omega, \mathcal F, \mathbb P)$. 
 Then, the random variable $\omega \mapsto \|m_\om\|_{L^\infty(\mu_\omega)}$ belongs to $L^{p'}(\Omega,\mathcal F, \mathbb P)$, where $p'$ is defined by $\frac 1{p'}=\frac 1r+\frac 1p+\frac 1{q_0}.$
 \end{lemma}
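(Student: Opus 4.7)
The plan is to bound $\|m_\omega\|_{L^\infty(\mu_\omega)}$ by a sum of three quantities coming from the decomposition $m_\omega=v_\omega+\chi_\omega-\chi_{\sigma\omega}\circ T_\omega$, and to show each is controlled in $L^{p'}(\Omega,\mathcal F,\mathbb P)$. I would first invoke the triangle inequality for the $L^\infty(\mu_\omega)$ norm:
\[
\|m_\omega\|_{L^\infty(\mu_\omega)}\le \|v_\omega\|_{L^\infty(\mu_\omega)}+\|\chi_\omega\|_{L^\infty(\mu_\omega)}+\|\chi_{\sigma\omega}\circ T_\omega\|_{L^\infty(\mu_\omega)}.
\]

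For the first term, the hypothesis $\|\varphi\|_{L^\infty(\mu_\omega)}\le\|\varphi\|_{\mathcal H}$ gives $\|v_\omega\|_{L^\infty(\mu_\omega)}\le a(\omega)$, which lies in $L^p(\Omega,\mathcal F,\mathbb P)$. Since $\mathbb P$ is a probability measure and, from the definition of $p'$, we have $\tfrac{1}{p'}=\tfrac1r+\tfrac1p+\tfrac1{q_0}\ge\tfrac1p$, so $p'\le p$ and $L^p\hookrightarrow L^{p'}$; thus the first term is in $L^{p'}$.

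For the second term, I would apply Lemma~\ref{chi bound} with the choice $s=p'$, for which the relation $\tfrac1s-\tfrac1r-\tfrac1p=\tfrac1{q_0}$ is exactly the defining relation for $p'$. This immediately yields $\omega\mapsto\|\chi_\omega\|_{L^\infty(\mu_\omega)}\in L^{p'}(\Omega,\mathcal F,\mathbb P)$. For the third term, the equivariance~\eqref{equi} gives $(T_\omega)_*\mu_\omega=\mu_{\sigma\omega}$, which implies
\[
\|\chi_{\sigma\omega}\circ T_\omega\|_{L^\infty(\mu_\omega)}=\|\chi_{\sigma\omega}\|_{L^\infty(\mu_{\sigma\omega})}.
\]
Combining this with the $\sigma$-invariance of $\mathbb P$, the random variable $\omega\mapsto\|\chi_{\sigma\omega}\|_{L^\infty(\mu_{\sigma\omega})}$ has the same $L^{p'}$-norm as $\omega\mapsto\|\chi_\omega\|_{L^\infty(\mu_\omega)}$, and hence also lies in $L^{p'}(\Omega,\mathcal F,\mathbb P)$.

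There is no real obstacle here since the argument is a direct combination of Lemma~\ref{chi bound}, the equivariance of the measures $(\mu_\omega)_{\omega\in\Omega}$, and the inclusion $L^p\subseteq L^{p'}$; the only step requiring a moment of care is matching the exponents $s$ and $p'$ and verifying that $p'\le p$, both of which follow immediately from the definition $\tfrac{1}{p'}=\tfrac1r+\tfrac1p+\tfrac1{q_0}$.
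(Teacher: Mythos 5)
Your proof is correct and follows essentially the same route as the paper: decompose $m_\omega$ via~\eqref{mdec}, bound $\|v_\omega\|_{L^\infty(\mu_\omega)}$ by $a(\omega)$, and apply Lemma~\ref{chi bound} with $s=p'$ to handle the $\chi$-terms. You merely spell out a few details the paper leaves implicit (the embedding $L^p\hookrightarrow L^{p'}$ and the equivariance/$\sigma$-invariance step for the third term), so there is no substantive difference.
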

\begin{proof}
By \eqref{mdec}, we have that 
$$
\|m_\om\|_{L^\infty(\mu_\om)}\leq a(\omega)+\|\chi_{\om}\|_{L^\infty(\mu_\omega)}+
\|\chi_{\sigma\omega}\|_{L^\infty(\mu_{\sigma\om})}.
$$ 
Now, it follows from Lemma~\ref{chi bound} that the random variable  $\om \mapsto \|\chi_{\om}\|_{L^\infty(\mu_\omega)}$ belongs to $L^{p'}(\Omega, \mathcal F, \mathbb P)$, which yields the desired conclusion. 
\end{proof}

\section{Iterated weak invariance principle}
For a measurable map $v\colon \Omega \times M\to \R^e$, we  consider c\`{a}dl\`{a}g processes $W_{\omega, n} \in D([0, \infty), \R^e)$ and $\mathbb{W}_{\omega, n} \in D([0, \infty), \R^{e\times e})$  defined by
\begin{equation}\label{W}
W_{\omega, n}(t):=\frac{1}{\sqrt n}\sum_{j=0}^{[nt]-1}v_{\sigma^j \omega}\circ T_\omega^{(j)}
\end{equation}
and 
\begin{equation}\label{mathbW}
\mathbb W_{\omega, n}^{\beta \gamma}(t):=\frac{1}{n}\sum_{0\le i<j\le [nt]-1}v_{\sigma^i \omega}^\beta \circ T_\omega^{(i)} \cdot v_{\sigma^j \omega}^\gamma \circ T_\omega^{(j)},
\end{equation}
for $\beta, \gamma \in \{1, \ldots, e\}$. Here, $v_\omega$ denotes $v(\omega, \cdot)$ and $D([0, \infty), \R^e)$ is the Skorokhod space.

\subsection{Preliminaries}
The following result can be regarded as  a random version of~\cite[Theorem 3.1]{KM}. 
\begin{theorem}\label{T1}
Let $p>0$, $r\geq \frac{2q_0}{q_0-2}$ and $s>2$ be such that \begin{equation}\label{wer}\frac 1 s=\frac 1 r+\frac 1 p+\frac{1}{q_0},\end{equation} and suppose that $K\in L^r(\Omega, \mathcal F, \mathbb P)$. Furthermore, assume that $v\colon \Omega \times M\to \R^e$ is a measurable map satisfying the following conditions:
\begin{itemize}
\item \eqref{spaceH} and~\eqref{centering} hold for $\mathbb P$-a.e. $\omega \in \Omega$;
\item \begin{equation}\label{b4471}
\omega \mapsto \lVert v_\omega \rVert_{\mathcal H}\in L^p(\Omega, \mathcal F, \mathbb P).
\end{equation}
\end{itemize}
Let $m\colon \Omega \times M\to \R^e$ be given by~\eqref{mdec} and take
$1\le \beta, \gamma \le e$. Then, the limit 
\[
\lim_{n\to \infty}\sum_{j=1}^n \int_{\Omega \times M} (v^\beta v^\gamma \circ \tau^j -m^\beta m^\gamma \circ \tau^j)\, d\mu
\]
exists and  for $\mathbb P$-a.e. $\omega \in \Omega$, 
\[
\mathbb W_{\omega, n}^{\beta \gamma}(t)-\mathbb M_{\omega, n}^{\beta \gamma} (t) \to t\sum_{j=1}^\infty \int_{\Omega \times M} (v^\beta v^\gamma \circ \tau^j -m^\beta m^\gamma \circ \tau^j)\, d\mu \quad \text{$\mu_\omega$-a.e.,}
\]
as $n\to \infty$, uniformly on compact subsets in $[0, \infty)$. Here, $\mathbb W_{\omega, n}$ and $\mathbb M_{\omega, n}$ are given by~\eqref{mathbW} for $v$ and $m$, respectively. 
In particular, for $\mathbb P$-a.e. $\omega \in \Omega$, the weak limits of the processes
\[
\mathbb W_{\omega, n}^{\beta \gamma}(t)-t\sum_{j=1}^n \int_{\Omega \times M} v^\beta v^\gamma \circ \tau^j \, d\mu, \quad \mathbb M_{\omega, n}^{\beta \gamma}(t)-t\sum_{j=1}^n \int_{\Omega \times M} m^\beta m^\gamma \circ \tau^j\, d\mu
\]
coincide (in the sense that if one limit exists, then so does the other and they are equal).
\end{theorem}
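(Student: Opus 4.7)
My plan is to follow the pattern of~\cite{KM} after substituting the random martingale decomposition $m_\om=v_\om+\chi_\om-\chi_{\sigma\om}\circ T_\om$. Writing $V_i:=v\circ\tau^i$, $M_i:=m\circ\tau^i$ and $X_i:=\chi\circ\tau^i$, so that $V_i=M_i+X_{i+1}-X_i$, the starting point is the algebraic identity
\begin{equation*}
V_i^\beta V_j^\gamma-M_i^\beta M_j^\gamma=M_i^\beta(X_{j+1}^\gamma-X_j^\gamma)+(X_{i+1}^\beta-X_i^\beta)M_j^\gamma+(X_{i+1}^\beta-X_i^\beta)(X_{j+1}^\gamma-X_j^\gamma).
\end{equation*}
Before anything else, I would verify that the series in the statement makes sense. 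Since $L_\om m_\om=0$ by~\eqref{lm}, the duality~\eqref{Duality} yields $\int m^\beta\cdot m^\gamma\circ\tau^k\,d\mu=0$ for every $k\ge 1$, so the claimed series collapses to $\sum_{j\ge 1}\int v^\beta\cdot v^\gamma\circ\tau^j\,d\mu$. Lemma~\ref{Skew DEC} applied with $\Phi=v^\beta$ and $\Psi=v^\gamma$ bounds the $j$-th summand by a constant multiple of $\|A_j\|_{L^{q_0}}$, and~\eqref{rho norm} yields absolute convergence.

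\textbf{Summation by parts.} Writing $N:=[nt]$ and summing the identity above over $0\le i<j\le N-1$, the inner $j$-sums $\sum_{j>i}(X_{j+1}^\gamma-X_j^\gamma)$ telescope to $X_N^\gamma-X_{i+1}^\gamma$, and similarly the inner $i$-sums telescope to $X_j^\beta-X_0^\beta$. After dividing by $n$, the expression separates into three \emph{bulk} Birkhoff averages associated with the functions
\begin{equation*}
m^\beta\cdot\chi^\gamma\circ\tau,\qquad \chi^\beta\cdot m^\gamma,\qquad (\chi^\beta\chi^\gamma)\circ\tau-\chi^\beta\cdot\chi^\gamma\circ\tau,
\end{equation*}
all of which lie in $L^1(\mu)$ by Lemma~\ref{chi bound} and Lemma~\ref{m lemma}, plus several \emph{boundary} remainders, each of them of the shape $n^{-1}X_N^\bullet\cdot R_N^\bullet$, where $R_N^\bullet$ is either another factor $X_N^\bullet$, or a partial sum $\sum_{k=0}^{N-1}M_k^\bullet$, or the telescoped difference $X_N^\bullet-X_0^\bullet$.

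\textbf{Passing to the limit.} For the bulk terms, Birkhoff's theorem on the ergodic skew product $(\Omega\times M,\mu,\tau)$ yields fiberwise $\mu_\om$-a.e.\ convergence to $t$ times the corresponding integral; the elementary upgrade to uniform convergence on $[0,T]$ follows from the standard observation that if $S_N/N\to c$ a.e.~then $\max_{k\le nT}|S_k-kc|/n\to 0$ a.e. For the boundary terms, the strict inequality $s>2$ is precisely what is needed. Lemma~\ref{chi bound} gives $\om\mapsto\|\chi_\om\|_{L^\infty(\mu_\om)}\in L^s(\mathbb P)$, so a Borel--Cantelli argument combined with $\sigma$-invariance of $\mathbb P$ forces $\sup_{N\le nT}\|\chi_{\sigma^N\om}\|_{L^\infty(\mu_{\sigma^N\om})}=o(n^{1/s})$ $\mathbb P$-a.s., and thus $\sup_{N\le nT}|X_N|=o(n^{1/s})$ $\mu_\om$-a.s.; combined with the LIL $\max_{k\le nT}|\sum_{j<k}M_j^\bullet|=O(\sqrt{n\log\log n})$ supplied by Corollary~\ref{cor m}, each boundary term is $O(n^{1/s-1/2}\sqrt{\log\log n})=o(1)$, uniformly in $t\in[0,T]$.

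\textbf{Identification of the limit and main obstacle.} Collecting the three bulk contributions yields
\begin{equation*}
t\left(-\int m^\beta\cdot\chi^\gamma\circ\tau\,d\mu+\int\chi^\beta\cdot m^\gamma\,d\mu+\int(\chi^\beta\cdot\chi^\gamma\circ\tau-\chi^\beta\chi^\gamma)\,d\mu\right).
\end{equation*}
To verify that this equals $t\sum_{j\ge 1}\int v^\beta\cdot v^\gamma\circ\tau^j\,d\mu$, I substitute $v=m+\chi\circ\tau-\chi$ inside the series, expand, and telescope each resulting sum using $\tau$-invariance of $\mu$; the tail contributions $\int m^\beta\cdot\chi^\gamma\circ\tau^N\,d\mu$ and $\int\chi^\beta\cdot\chi^\gamma\circ\tau^N\,d\mu$ vanish in the limit by Corollary~\ref{Skew chi cor} (the analogue with $m$ and $\chi$ interchanged being proved in exactly the same way). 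This gives the desired identification, and the last assertion about the weak limits of the two recentered processes is then immediate since they differ by a deterministic affine function of $t$ that tends to $0$ uniformly on compacts. I expect the main obstacle to be the boundary term $n^{-1}X_N^\gamma\sum_{k=0}^{N-1}M_k^\beta$: an $L^2$ estimate for $\chi$ alone would leave an unbounded $\sqrt{\log\log n}$ factor coming from the LIL, which is why the strict inequality $s>2$ is indispensable exactly at this step.
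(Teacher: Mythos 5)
Your proposal is correct and follows essentially the same route as the paper: introduce $m$ via the martingale decomposition, telescope the double sum into Birkhoff averages plus boundary terms, handle the boundary terms with the LIL from Corollary~\ref{cor m} and the $o(n^{1/s})$ growth of $\|\chi_{\sigma^n\omega}\|_{L^\infty}$ (which is where $s>2$ enters), and identify the limiting constant by telescoping the series and killing the tails with Corollary~\ref{Skew chi cor}. The only difference is cosmetic — you expand $v^\beta v^\gamma-m^\beta m^\gamma$ into the three symmetric pieces $m^\beta a^\gamma+a^\beta m^\gamma+a^\beta a^\gamma$ while the paper groups two of them as $a^\beta v^\gamma$, which shortens the bookkeeping (your third bulk function as written has the wrong sign, but your final identification recovers the correct value).
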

\begin{proof}
Firstly, we observe that 
\begin{equation}\label{constants}
\frac 1 p+\frac 1 r+\frac{1}{q_0}=\frac 1 s<\frac 1 2 \quad \text{and} \quad \frac 2 p+\frac 1 r+\frac{1}{q_0}=\frac 1 p+\frac 1 s\le \frac 2 s <1.
\end{equation}
Secondly, Lemmas~\ref{chi bound} and~\ref{m lemma} together with~\eqref{b4471} give that $\omega \mapsto \|\chi_\omega \|_{L^\infty(\mu_\omega)}$ and 
$\omega \mapsto \|m_\omega\|_{L^\infty (\mu_\omega)}$ belong to $L^s(\Omega, \mathcal F, \mathbb P)$. Write $v=m+a$, where $a:=\chi \circ \tau-\chi$. We have that
\[
\begin{split}
\mathbb W_{\omega, n}^{\beta \gamma}(t)-\mathbb M_{\omega, n}^{\beta \gamma}(t) &=\frac{1}{n}\sum_{0\le i<j\le [nt]-1}v_{\sigma^i \omega}^\beta \circ T_\omega^{(i)} \cdot v_{\sigma^j \omega}^\gamma \circ T_\omega^{(j)} \\
&\phantom{=}-\frac{1}{n}\sum_{0\le i<j\le [nt]-1}m_{\sigma^i \omega}^\beta \circ T_\omega^{(i)} \cdot m_{\sigma^j \omega}^\gamma \circ T_\omega^{(j)} \\
&=\frac 1 n \sum_{0\le i<j\le [nt]-1} a_{\sigma^i \omega}^\beta \circ T_\omega^{(i)} \cdot v_{\sigma^j \omega}^\gamma \circ T_\omega^{(j)} \\
&\phantom{=}+\frac 1 n \sum_{0\le i<j\le [nt]-1} m_{\sigma^i \omega}^\beta \circ T_\omega^{(i)} \cdot v_{\sigma^j \omega}^\gamma \circ T_\omega^{(j)} \\
&\phantom{=}-\frac 1 n \sum_{0\le i<j\le [nt]-1}m_{\sigma^i \omega}^\beta \circ T_\omega^{(i)} \cdot  v_{\sigma^j \omega}^\gamma \circ T_\omega^{(j)} \\
&\phantom{=}+\frac 1 n \sum_{0\le i<j\le [nt]-1}m_{\sigma^i \omega}^\beta \circ T_\omega^{(i)} \cdot  a_{\sigma^j \omega}^\gamma \circ T_\omega^{(j)} \\
&=I_{\omega, n}(t)+II_{\omega, n}(t), 
\end{split}
\]
where 
\[
I_{\omega, n}(t)=\frac 1 n \sum_{0\le i<j\le [nt]-1} a_{\sigma^i \omega}^\beta \circ T_\omega^{(i)} \cdot v_{\sigma^j \omega}^\gamma \circ T_\omega^{(j)}
\]
and
\[
II_{\omega, n}(t)=\frac 1 n \sum_{0\le i<j\le [nt]-1}m_{\sigma^i \omega}^\beta \circ T_\omega^{(i)} \cdot  a_{\sigma^j \omega}^\gamma \circ T_\omega^{(j)}.
\]
Observe that 
\[
\begin{split}
I_{\omega, n}(t) &=\frac 1 n \sum_{j=1}^{[nt]-1}\sum_{i=0}^{j-1}(\chi_{\sigma^{i+1}\omega}^\beta \circ T_\omega^{(i+1)}-\chi_{\sigma^i \omega}^\beta \circ T_\omega^{(i)}) v_{\sigma^j \omega}^\gamma \circ T_\omega^{(j)} \\
&=\frac 1 n \sum_{j=1}^{[nt]-1}(\chi_{\sigma^j \omega}^\beta \circ T_\omega^{(j)}-\chi_\omega^\beta)v_{\sigma^j \omega}^\gamma \circ T_\omega^{(j)} \\
&=\frac 1 n \sum_{j=1}^{[nt]-1} (\chi_{\sigma^j \omega}^\beta \cdot v_{\sigma^j \omega}^\gamma)\circ T_\omega^{(j)}-\frac 1 n \chi_\omega^\beta  \sum_{j=1}^{[nt]-1}v_{\sigma^j \omega}^\gamma \circ T_\omega^{(j)}.
\end{split}
\]
Then, it follows from Birkhoff's ergodic theorem that 
\[
\frac{1}{nt}\sum_{j=1}^{[nt]-1} (\chi^\beta \cdot v^\gamma)\circ \tau^j  \to \int_{\Omega \times M}\chi^\beta \cdot v^\gamma \, d\mu \quad \text{$\mu$-a.e.},
\]
and thus for $\mathbb P$-a.e. $\omega \in \Omega$, 
\[
\frac 1 n \sum_{j=1}^{[nt]-1} (\chi_{\sigma^j \omega}^\beta \cdot v_{\sigma^j \omega}^\gamma)\circ T_\omega^{(j)} \to t\int_{\Omega \times M}\chi^\beta \cdot v^\gamma \, d\mu, \quad \text{$\mu_\omega$-a.e.}
\]
By using Birkhoff's theorem again, for $\mathbb P$-a.e. $\omega \in \Omega$, we have that 
\[
\frac 1 n \sum_{j=1}^{[nt]-1}v_{\sigma^j \omega}^\gamma \circ T_\omega^{(j)} \to t\int_{\Omega \times M} v^\gamma \, d\mu=0, \quad \text{$\mu_\omega$-a.e.}
\]
Hence, it follows from the last two equalities that for $\mathbb P$-a.e. $\omega \in \Omega$,
\begin{equation}\label{225}
I_{\omega, n}(t)\to t\int_{\Omega \times M}\chi^\beta \cdot v^\gamma \, d\mu, \quad \text{$\mu_\omega$-a.e.}
\end{equation}
Similarly,  we have that 
\[
\begin{split}
II_{\omega, n}(t) &=\frac 1 n \sum_{i=0}^{[nt]-2}\sum_{j=i+1}^{[nt]-1}m_{\sigma^ i \omega}^\beta \circ T_\omega^{(i)} \big (\chi_{\sigma^{{j+1}}\omega}^\gamma \circ T_\omega^{(j+1)}-\chi_{\sigma^j \omega}^\gamma\circ T_\omega^{(j)}\big ) \\
&=\frac{1}{n}\sum_{i=0}^{[nt]-2}m_{\sigma^ i \omega}^\beta \circ T_\omega^{(i)} \big (\chi_{\sigma^{[nt]} \omega}^\gamma \circ T_\omega^{([nt])}-\chi_{\sigma^{i+1}\omega}^\gamma \circ T_\omega^{(i+1)} \big ) \\
&=\chi_{\sigma^{[nt]} \omega}^\gamma \circ T_\omega^{([nt])}\frac{1}{n}\sum_{i=0}^{[nt]-2}m_{\sigma^ i \omega}^\beta \circ T_\omega^{(i)} \\
&\phantom{=}-\frac{1}{n}\sum_{i=0}^{[nt]-2}(m_{\sigma^ i \omega}^\beta \circ T_\omega^{(i)}) \cdot ( \chi_{\sigma^{i+1}\omega}^\gamma \circ T_\omega^{(i+1)}).
\end{split}
\]
It follows from Birkhoff's ergodic theorem that for $\mathbb P$-a.e. $\omega \in \Omega$, 
\[
\frac{1}{n}\sum_{i=0}^{[nt]-2}(m_{\sigma^ i \omega}^\beta \circ T_\omega^{(i)}) \cdot (\chi_{\sigma^{i+1}\omega}^\gamma \circ T_\omega^{(i+1)} )\to t \int_{\Omega \times M}m^\beta \chi^\gamma \circ \tau \, d\mu, \quad \text{$\mu_\omega$-a.e.}
\]
We now claim that for $\mathbb P$-a.e. $\omega \in \Omega$, 
\begin{equation}\label{4-0-9}
\chi_{\sigma^{[nt]} \omega}^\gamma \circ T_\omega^{([nt])}\frac{1}{n}\sum_{i=0}^{[nt]-2}m_{\sigma^ i \omega}^\beta \circ T_\omega^{(i)} \to 0, \quad \text{$\mu_\omega$-a.e.}
\end{equation}
 Since $\omega \mapsto \| \chi_\omega \|_{L^\infty(\mu_\omega)}\in L^s(\Omega, \mathcal F, \mathbb P)$,  \eqref{4-0-9} follows directly from Birkhoff's ergodic theorem in the case when $s=\infty$. Next, we consider the case $s<\infty$.
By Theorem \ref{CLT}(iii) (which can be applied due to~\eqref{constants}), for  $\mathbb P$-a.e. $\omega \in \Omega$ the process $( v_{\sigma^n\omega}^\beta \circ T_\omega^{(n)})_{n\in \N}$ satisfies the law of iterated logarithm. By Corollary~\ref{cor m}, we see that for  $\mathbb P$-a.e. $\omega \in \Omega$ the process $(m_{\sigma^n\omega}^\beta \circ T_\omega^{(n)})_{n\in \N}$ satisfies the law of iterated logarithm. Thus, for $\mathbb P$-a.e. $\omega \in \Omega$,
$$
\sum_{i=0}^{n-1}m_{\sigma^ i \omega}^\beta \circ T_\omega^{(i)}=O(n^{1/2}\sqrt {\ln \ln n}),\,\,\mu_\om\text{-a.e.}
$$
Since $\omega \mapsto \| \chi_\omega \|_{L^\infty (\mu_\omega)}\in L^s(\Omega, \mathcal F, \mathbb P)$, it follows from Birkhoff's ergodic theorem that $
\|\chi_{\sigma^{n} \omega}^\gamma\|_{L^\infty(\mu_{\sigma^n\om})}=o(n^{1/s})$ for $\mathbb P$-a.e. $\omega \in \Omega$. 
Indeed,   let $\Psi(\omega):=\|\chi_\omega^\gamma\|_{L^\infty(\mu_\omega)}^s$ for $\omega \in \Omega$. Then, $\Psi \in L^1(\Omega, \mathcal F, \mathbb P)$. By Birkhoff's ergodic theorem, 
\[
\lim_{n\to \infty}\frac{1}{n}\sum_{i=0}^{n-1}\Psi(\sigma^i \omega)=\int_\Omega \Psi \, d\mathbb P \quad \text{for $\mathbb P$-a.e. $\omega \in \Omega$, }
\]
which gives that $\lim\limits_{n\to \infty}\frac{1}{n}\Psi(\sigma^n \omega)=0$ for $\mathbb P$-a.e. $\omega \in \Omega$. Hence, 
$\lim\limits_{n\to \infty}\frac{\|\chi_{\sigma^n\omega}^\gamma\|_{L^\infty(\mu_{\sigma^n \omega})}}{n^{1/s}}=0$ for $\mathbb P$-a.e. $\omega \in \Omega$, yielding the desired conclusion.
Since $s>2$ we conclude that~\eqref{4-0-9} holds in this case as well.

Therefore, for $\mathbb P$-a.e. $\omega \in \Omega$, we have that 
\begin{equation}\label{226}
II_{\omega, n}(t) \to -t \int_{\Omega \times M}m^\beta \chi^\gamma \circ \tau \, d\mu, \quad \text{$\mu_\omega$-a.e.}
\end{equation}
By~\eqref{225} and~\eqref{226}, we conclude that for $\mathbb P$-a.e. $\omega \in \Omega$,
\begin{equation}\label{356}
\mathbb W_{\omega, n}^{\beta \gamma}(t)-\mathbb M_{\omega, n}^{\beta \gamma}(t)  \to t\bigg{(}\int_{\Omega \times M}\chi^\beta v^\gamma \, d\mu -\int_{\Omega \times M}m^\beta \chi^\gamma \circ \tau \, d\mu\bigg{)}, \ \text{$\mu_\omega$-a.e.}
\end{equation}
On the other hand, we have that 
\[
v^\beta \cdot(v^\gamma \circ \tau^j) -m^\beta \cdot(m^\gamma \circ \tau^j)=(\chi^\beta \circ \tau-\chi^\beta)v^\gamma \circ \tau^j+m^\beta (\chi^\gamma \circ \tau-\chi^\gamma)\circ \tau^j.
\]
Hence, using that $\tau$ preserves $\mu$ we have that
\[
\begin{split}
& \sum_{j=1}^n \int_{\Omega \times M}v^\beta\cdot(v^\gamma \circ \tau^j) \, d\mu -\sum_{j=1}^n \int_{\Omega \times M} m^\beta \cdot(m^\gamma \circ \tau^j)\, d\mu \\
&=\sum_{j=1}^n \int_{\Omega \times M}\bigg{\{}(\chi^\beta \circ \tau-\chi^\beta)v^\gamma \circ \tau^j+m^\beta (\chi^\gamma \circ \tau-\chi^\gamma)\circ \tau^j \bigg{\}}\, d\mu \\
&=\sum_{j=1}^n \int_{\Omega \times M}\bigg{\{} (\chi^\beta \circ \tau^{n-j+1} -\chi^\beta \circ \tau^{n-j})v^\gamma \circ \tau^n \\
&\phantom{=\sum_{j=1}^n \int_{\Omega \times M}\bigg{\{}}+m^\beta (\chi^\gamma \circ \tau^{j+1}-\chi^\gamma \circ \tau^j ) \bigg{\}}\, d\mu \\
&=\int_{\Omega \times M}\chi^\beta v^\gamma \, d\mu-\int_{\Omega \times M}m^\beta \cdot(\chi^\gamma \circ \tau) \, d\mu) +L_n, 
\end{split}
\]
where 
\[
L_n:=\int_{\Omega \times M}(m^\beta\cdot(\chi^\gamma \circ \tau^{n+1})-\chi^\beta\cdot(v^\gamma \circ \tau^n))\, d\mu.
\]
Now,  $L_n \to 0$ as $n\to \infty$ by Corollary \ref{Skew chi cor}.
The conclusion of the theorem now follows directly from~\eqref{356}.
\end{proof}

$$
$$
As a direct consequence of the previous theorem, we obtain the following corollary. 
\begin{cor}\label{co}
Let the assumptions of Theorem~\ref{T1} hold, $\hat W\in D([0, \infty), \mathbb R^e)$ and $\hat{\mathbb W}\in D([0, \infty), \mathbb R^{e\times e})$. 
Furthermore,  suppose that for $\mathbb P$-a.e. $\omega \in \Omega$, $(M_{\omega, n}, \mathbb M_{\omega, n})\to_w (\hat W, \hat{\mathbb W})$ in $D([0, \infty), \R^e \times \R^{e\times e})$, where $M_{\omega, n}$ and $\mathbb M_{\omega, n}$ are given by~\eqref{W} and~\eqref{mathbW} by replacing $v$ by $m$.
Then, for $\mathbb P$-a.e. $\omega \in \Omega$,  $(W_{\omega, n}, \mathbb W_{\omega, n})\to_w (W, \mathbb W)$ in $D([0, \infty), \R^e \times \R^{e\times e})$, where $W=\hat W$ and 
\[
\mathbb W^{\beta \gamma}(t)=\mathbb M^{\beta \gamma} (t)+t\sum_{j=1}^\infty \int_{\Omega \times M} (v^\beta v^\gamma \circ \tau^j -m^\beta m^\gamma \circ \tau^j)\, d\mu.
\]
\end{cor}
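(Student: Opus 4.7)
The plan is to write both $W_{\omega,n}$ and $\mathbb W_{\omega,n}$ as the corresponding martingale process plus an explicit perturbation, show that each perturbation converges $\mu_\omega$-a.s., uniformly on compact subsets of $[0,\infty)$, to a continuous deterministic element of the Skorokhod space, and then invoke a Slutsky-type argument to transfer the weak convergence hypothesis from $(M_{\omega,n},\mathbb M_{\omega,n})$ to $(W_{\omega,n},\mathbb W_{\omega,n})$.

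First, I would telescope the first coordinate using the martingale decomposition~\eqref{mdec}, which gives
\[
W_{\omega,n}(t)-M_{\omega,n}(t)=\frac{1}{\sqrt n}\bigl(\chi_{\sigma^{[nt]}\omega}\circ T_\omega^{([nt])}-\chi_\omega\bigr).
\]
Lemma~\ref{chi bound} places $\omega\mapsto\|\chi_\omega\|_{L^\infty(\mu_\omega)}$ in $L^s(\Omega,\mathcal F,\mathbb P)$ with $s>2$ (the exponent $s$ from~\eqref{wer} in Theorem~\ref{T1}). Birkhoff's ergodic theorem applied to $\sigma$ then yields $\|\chi_{\sigma^n\omega}\|_{L^\infty(\mu_{\sigma^n\omega})}=o(n^{1/s})=o(n^{1/2})$ for $\mathbb P$-a.e.\ $\omega$, so the right-hand side above tends to $0$ uniformly in $t$ over any compact subset of $[0,\infty)$, $\mu_\omega$-almost surely.

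Second, for the iterated coordinate, Theorem~\ref{T1} applied entrywise gives, for $\mathbb P$-a.e.\ $\omega$ and all $\beta,\gamma\in\{1,\ldots,e\}$,
\[
\mathbb W_{\omega,n}^{\beta\gamma}(t)-\mathbb M_{\omega,n}^{\beta\gamma}(t)\to tC^{\beta\gamma}\quad\text{$\mu_\omega$-a.e.,}
\]
uniformly on compact subsets of $[0,\infty)$, where $C^{\beta\gamma}:=\sum_{j=1}^\infty\int_{\Omega\times M}(v^\beta v^\gamma\circ\tau^j-m^\beta m^\gamma\circ\tau^j)\,d\mu$. Combining the two steps, the perturbation pair $(W_{\omega,n}-M_{\omega,n},\,\mathbb W_{\omega,n}-\mathbb M_{\omega,n})$ converges $\mu_\omega$-almost surely, uniformly on compacts, to the deterministic continuous element $\bigl(0,\;t\mapsto tC\bigr)$ of $D([0,\infty),\R^e\times\R^{e\times e})$, and hence converges in probability in the Skorokhod $J_1$ topology.

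Finally, because the limit of the perturbation is a continuous deterministic function, addition is continuous at the limit point in the Skorokhod topology; a Slutsky-type argument (e.g.\ via the extended continuous mapping theorem) therefore combines the hypothesis $(M_{\omega,n},\mathbb M_{\omega,n})\to_w(\hat W,\hat{\mathbb W})$ with the in-probability convergence of the perturbation to yield $(W_{\omega,n},\mathbb W_{\omega,n})\to_w\bigl(\hat W,\;\hat{\mathbb W}+(t\mapsto tC)\bigr)$, which is exactly the claimed limit. The main obstacle is ensuring that the first-coordinate perturbation, which carries only a $1/\sqrt n$ factor, is genuinely negligible; this is precisely where the strict inequality $s>2$ built into the hypotheses of Theorem~\ref{T1} is decisive, and once it is in place the remainder amounts to standard bookkeeping in the Skorokhod space.
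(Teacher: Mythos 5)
Your argument is correct and, as far as one can tell, is exactly what the authors have in mind: the paper gives no explicit proof, asserting Corollary~\ref{co} as ``a direct consequence of the previous theorem,'' and the implicit steps are precisely the ones you supply. The second coordinate is handled by the $\mu_\omega$-a.e.\ uniform-on-compacts limit in Theorem~\ref{T1}, and the first coordinate by the telescoping identity $W_{\omega,n}(t)-M_{\omega,n}(t)=n^{-1/2}\bigl(\chi_{\sigma^{[nt]}\omega}\circ T_\omega^{([nt])}-\chi_\omega\bigr)$ together with $\|\chi_{\sigma^k\omega}\|_{L^\infty(\mu_{\sigma^k\omega})}=o(k^{1/s})=o(k^{1/2})$ from Lemma~\ref{chi bound} and Birkhoff (the same computation appears in the paper's proof of Corollary~\ref{cor m}, at $t=1$). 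Concluding via a Slutsky/converging-together argument, justified because the perturbation converges $\mu_\omega$-a.s.\ uniformly on compacts to a deterministic continuous element of the Skorokhod space, is the standard and intended route. One could quibble that you should make explicit that the supremum over $0\le k\le nT$ of $\|\chi_{\sigma^k\omega}\|_{L^\infty(\mu_{\sigma^k\omega})}$ is $o(n^{1/2})$ (not merely each term), but this is immediate from $o(k^{1/2})$ along the orbit and is clearly what you intend.
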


\subsection{Iterated weak invariance principle for martingales}
Throughout this subsection we consider a measurable $m\colon \Omega \times M\to \R^e$ with the property that $m_\omega=m(\omega, \cdot)\in L^\infty(\mu_\omega)$ for $\mathbb P$-a.e. $\omega \in \Omega$. Moreover, we require that  \eqref{lm} holds for  $\mathbb P$-a.e. $\omega \in \Omega$.
 Let $M_{\omega, n}\in D([0, \infty), \R^e)$ and $\mathbb {M}_{\omega, n}\in D([0, \infty), \R^{e\times e})$ be defined as~\eqref{W} and~\eqref{mathbW}, by replacing $v$ with $m$.

In order to establish the weak invariance principle for $(M_{\omega, n}, \mathbb M_{\omega, n})$, we need several auxiliary results. For $\omega \in \Omega$, set 
\[
\hat{M}_\omega:=\{\mathbf x=(x_n)_{n\in \Z}\subset M: T_{\sigma^n \omega}(x_n)=x_{n+1}, \  \forall n\in \Z\}.
\]
Moreover, let $\hat{T}_\omega \colon \hat{M}_\omega \to \hat{M}_{\sigma \omega}$ be given by
\[
(\hat{T}_\omega (\mathbf x))_n=x_{n+1}=T_{\sigma^n \omega}(x_n), \quad \mathbf x=(x_n)_{n\in \Z}\in \hat{M}_\omega, \ n\in \Z.
\]
Observe that $\hat T_\omega$ is an invertible transformation and that its inverse is given by 
\[
((\hat{T}_\omega)^{-1} (\mathbf x))_n=x_{n-1},  \quad \mathbf x=(x_n)_{n\in \Z}\in \hat{M}_{\sigma \omega}, \ n\in \Z.
\]
For $j\in \N$ and $\omega \in \Omega$, set
\[
\hat T_\omega^{(j)}=\hat{T}_{\sigma^{j-1} \omega} \circ \ldots \circ \hat T_\omega \quad \text{and} \quad \hat T_\omega^{(-j)}=(\hat T_{\sigma^{-j} \omega}^{(j)})^{-1}.
\]
In addition, we consider  canonical projections $i_\omega \colon \hat{M}_\omega \to M$ defined by 
\[
i_\omega(\mathbf x)=x_0, \quad \mathbf x=(x_n)_{n\in \Z}\in \hat{M}_\omega. 
\]
One can easily verify that
\begin{equation}\label{iT}
i_{\sigma \omega}\circ \hat T_\omega=T_\omega \circ i_\omega, \quad \text{for $\omega \in \Omega$.}
\end{equation}
For $\omega \in \Omega$, set \[\hat{\mathcal B}_\omega=\{i_\omega^{-1}(B): B\in \mathcal B\}.\]
Then, $\hat{\mathcal B}_\omega$ is a $\sigma$-algebra on $\hat{M}_\omega$.
For $\omega \in \Omega$, we define the measure $\hat{\mu}_\omega$ on $(\hat{M}_\omega, \hat{\mathcal B}_\omega)$ by 
\[
\hat{\mu}_\omega (i_\omega^{-1}(B))=\mu_\omega(B), \quad  \text{for $B\in \mathcal B$.}
\]
We make the following simple observation.
\begin{lemma}
For $\mathbb P$-a.e. $\omega \in \Omega$,
\[
(\hat{T}_\omega)^*\hat{\mu}_\omega=\hat{\mu}_{\sigma \omega}.
\]
\end{lemma}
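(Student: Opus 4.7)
The plan is to unwind the definitions and reduce the identity $(\hat{T}_\omega)_*\hat{\mu}_\omega=\hat{\mu}_{\sigma\omega}$ directly to the equivariance relation~\eqref{equi}, using the intertwining $i_{\sigma\omega}\circ \hat T_\omega=T_\omega\circ i_\omega$ from~\eqref{iT} as the crucial link between the extended and the original system.

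Concretely, I would fix an arbitrary element $\hat B\in\hat{\mathcal B}_{\sigma\omega}$, which by definition has the form $\hat B=i_{\sigma\omega}^{-1}(B)$ for some $B\in\mathcal B$. The first step is to compute the preimage $\hat T_\omega^{-1}(\hat B)$ and to check that it lies in $\hat{\mathcal B}_\omega$, so that $\hat{\mu}_\omega(\hat T_\omega^{-1}(\hat B))$ makes sense. Applying~\eqref{iT} I get
\[
\hat T_\omega^{-1}(\hat B)=\hat T_\omega^{-1}(i_{\sigma\omega}^{-1}(B))=(i_{\sigma\omega}\circ\hat T_\omega)^{-1}(B)=(T_\omega\circ i_\omega)^{-1}(B)=i_\omega^{-1}(T_\omega^{-1}(B)),
\]
which is manifestly in $\hat{\mathcal B}_\omega$ since $T_\omega$ is measurable and hence $T_\omega^{-1}(B)\in\mathcal B$.

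The second step is to evaluate both sides of the required identity using the definition $\hat{\mu}_\omega(i_\omega^{-1}(\cdot))=\mu_\omega(\cdot)$. On the left I obtain $\hat{\mu}_\omega(i_\omega^{-1}(T_\omega^{-1}(B)))=\mu_\omega(T_\omega^{-1}(B))$, while on the right I get $\hat{\mu}_{\sigma\omega}(i_{\sigma\omega}^{-1}(B))=\mu_{\sigma\omega}(B)$. The equivariance relation~\eqref{equi}, valid for $\mathbb P$-a.e.\ $\omega$, then yields $\mu_\omega(T_\omega^{-1}(B))=((T_\omega)_*\mu_\omega)(B)=\mu_{\sigma\omega}(B)$, closing the computation.

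I do not foresee any genuine obstacle: the statement is essentially bookkeeping, and the only point deserving slight care is the measurability of the map $\hat T_\omega\colon(\hat M_\omega,\hat{\mathcal B}_\omega)\to(\hat M_{\sigma\omega},\hat{\mathcal B}_{\sigma\omega})$, which however is guaranteed by the same computation $\hat T_\omega^{-1}(i_{\sigma\omega}^{-1}(B))=i_\omega^{-1}(T_\omega^{-1}(B))$ applied to generators of $\hat{\mathcal B}_{\sigma\omega}$. Since the relation is checked on arbitrary generators, it extends automatically to the full $\sigma$-algebra and yields the claimed pushforward identity on the $\mathbb P$-full measure set where~\eqref{equi} holds.
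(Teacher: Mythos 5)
Your proposal is correct and follows essentially the same computation as the paper's proof: pull back a generator $i_{\sigma\omega}^{-1}(B)$ through $\hat T_\omega$ using the intertwining relation~\eqref{iT}, reduce both sides via the definition of $\hat\mu_\omega$, and close with the equivariance~\eqref{equi}. The only difference is that you make explicit the (routine) measurability and generator-class remarks that the paper leaves implicit.
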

\begin{proof}
It follows from~\eqref{equi} and~\eqref{iT} that 
\[
\begin{split}
(\hat{T}_\omega)^*\hat{\mu}_\omega (i_{\sigma \omega}^{-1}(B)) &=\hat{\mu}_\omega ((i_{\sigma \omega}\circ \hat{T}_\omega)^{-1}(B)) \\
&=\hat{\mu}_\omega (i_\omega^{-1}(T_\omega^{-1}(B))) \\
&=\mu_\omega (T_\omega^{-1}(B))) \\
&=T_\omega^*\mu_\omega (B)\\
&=\mu_{\sigma \omega}(B)\\
&=\hat{\mu}_{\sigma \omega}(i_{\sigma \omega}^{-1}(B)),
\end{split}
\]
for $\mathbb P$-a.e. $\omega \in \Omega$ and $B\in \mathcal B$. The proof of the lemma is completed. 
\end{proof}
Similarly to~\eqref{tau}, we consider the skew-product transformation $\hat{\tau}$ given by
\[
\hat{\tau}(\omega, \mathbf x)=(\sigma \omega, \hat{T}_\omega (\mathbf x)), \quad \text{for $\omega \in \Omega$ and $\mathbf x\in \hat{M}_\omega$.}
\]
Observe that $\hat{\tau}$ is invertible and that its inverse is given by
\[
(\hat{\tau})^{-1}(\omega, \mathbf x)=(\sigma^{-1} \omega, \hat{T}_\omega^{(-1)}(\mathbf x)), \quad \text{for $\omega \in \Omega$ and $\mathbf x\in \hat{M}_\omega$.}
\]
Observe that $\hat{\tau}$ (and hence also its inverse) preserve the measure $\hat{\mu}$ given by 
\[
\hat{\mu}(C)=\int_\Omega  \hat{\mu}_\omega (C_\omega)\, d\mathbb P(\omega), 
\]
where $C\subset
\{(\omega, \mathbf x): \omega \in \Omega, \ \mathbf x\in \hat{M}_\omega \}$ is measurable and $C_\omega=\{\mathbf x\in \hat{M}_\omega: (\omega, \mathbf x)\in C\}$. In addition, 
the ergodicity of $\mu$ implies that $\hat{\mu}$ is also ergodic. In what  follows, $\hat{\mathbb E}_\omega [\psi \rvert \mathcal G]$ will denote the conditional expectation of $\psi$ with respect to the measure $\hat{\mu}_\omega$  and a $\sigma$-algebra $\mathcal G$. For $\omega \in \Omega$, set 
\[
\tilde m_\omega :=m_\omega \circ i_\omega. 
\]
\begin{lemma}\label{martingale}
For $\mathbb P$-a.e. $\omega \in \Omega$ and $n\in \N$, we have that
\[
\hat{\mathbb E}_\omega [\tilde m_{\sigma^{-n} \omega}\circ \hat T_\omega^{(-n)} \rvert \hat T_{\sigma^{-(n-1)}\omega}^{(n-1)}\hat {\mathcal B}_{\sigma^{-(n-1)}\omega}]=0.
\]
\end{lemma}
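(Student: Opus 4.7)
The plan is to descend from the natural extension $\hat M_\omega$ to $M$ via the coordinate projection $\pi_{-n}\colon \hat M_\omega\to M$, $\pi_{-n}(\mathbf x)=x_{-n}$, and then invoke~\eqref{lm}. Since $\hat T_\omega^{(-n)}$ shifts the indexing by $-n$, the zeroth coordinate of $\hat T_\omega^{(-n)}\mathbf x$ is $x_{-n}$, and therefore
\[
\tilde m_{\sigma^{-n}\omega}\circ \hat T_\omega^{(-n)}\;=\;m_{\sigma^{-n}\omega}\circ \pi_{-n}.
\]
Analogously, using $(\hat T_{\sigma^{-(n-1)}\omega}^{(n-1)})^{-1}=\hat T_\omega^{(-(n-1))}$ and that $\hat{\mathcal B}_{\sigma^{-(n-1)}\omega}$ is generated by the zeroth coordinate, the conditioning $\sigma$-algebra $\hat T_{\sigma^{-(n-1)}\omega}^{(n-1)}\hat{\mathcal B}_{\sigma^{-(n-1)}\omega}$ on $\hat M_\omega$ is the one generated by the map $\mathbf x\mapsto x_{1-n}$. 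Since $x_{1-n}=T_{\sigma^{-n}\omega}(x_{-n})$, this $\sigma$-algebra coincides with $\pi_{-n}^{-1}(T_{\sigma^{-n}\omega}^{-1}(\mathcal B))$.

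Next I would verify that $\pi_{-n}$ pushes $\hat\mu_\omega$ forward to $\mu_{\sigma^{-n}\omega}$. Iterating $x_{k+1}=T_{\sigma^k\omega}(x_k)$ gives $i_\omega=T_{\sigma^{-n}\omega}^{(n)}\circ \pi_{-n}$; combining this with equivariance~\eqref{equi} and the definition $\hat\mu_\omega\circ i_\omega^{-1}=\mu_\omega$ yields the pushforward claim, which is a standard property of the natural extension. Because both the integrand $m_{\sigma^{-n}\omega}\circ\pi_{-n}$ and the conditioning $\sigma$-algebra $\pi_{-n}^{-1}(T_{\sigma^{-n}\omega}^{-1}(\mathcal B))$ are pullbacks through $\pi_{-n}$, the conditional expectation on the natural extension reduces to the corresponding conditional expectation on $M$, pulled back by $\pi_{-n}$:
\[
\hat{\mathbb E}_\omega\big[\tilde m_{\sigma^{-n}\omega}\circ \hat T_\omega^{(-n)}\,\big|\,\hat T_{\sigma^{-(n-1)}\omega}^{(n-1)}\hat{\mathcal B}_{\sigma^{-(n-1)}\omega}\big]=\mathbb E_{\mu_{\sigma^{-n}\omega}}\big[m_{\sigma^{-n}\omega}\,\big|\, T_{\sigma^{-n}\omega}^{-1}(\mathcal B)\big]\circ\pi_{-n}.
\]
By the transfer operator formula (\cite[Lemma 6]{DFGTV1}, already used in the proof of Lemma~\ref{MD}), the inner conditional expectation equals $L_{\sigma^{-n}\omega}(m_{\sigma^{-n}\omega})\circ T_{\sigma^{-n}\omega}$. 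This vanishes by~\eqref{lm} applied at the point $\sigma^{-n}\omega$, which completes the argument.

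The only real obstacle is the careful identification of the conditioning $\sigma$-algebra $\hat T_{\sigma^{-(n-1)}\omega}^{(n-1)}\hat{\mathcal B}_{\sigma^{-(n-1)}\omega}$ as the pullback of $T_{\sigma^{-n}\omega}^{-1}(\mathcal B)$ along $\pi_{-n}$, together with the identification of the pushforward of $\hat\mu_\omega$ under $\pi_{-n}$; once that bookkeeping is in hand, the statement reduces directly to the base-space identity~\eqref{lm}, in complete analogy with Lemma~\ref{MD}.
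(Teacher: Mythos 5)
Your proposal takes essentially the same route as the paper's proof: both reduce the identity on $\hat M_\omega$ to the base-space identity $\mathbb E_{\sigma^{-n}\omega}[m_{\sigma^{-n}\omega}\mid T_{\sigma^{-n}\omega}^{-1}(\mathcal B)]=0$ (which is~\eqref{lm} via~\cite[Lemma 6]{DFGTV1}, or equivalently~\eqref{904} at $n=0$); the paper writes out the integral computation step by step, whereas you package it conceptually as a pullback through $\pi_{-n}$, which is a clean reframing of the same calculation.

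There is, however, one flaw in the justification of the pushforward $(\pi_{-n})_*\hat\mu_\omega=\mu_{\sigma^{-n}\omega}$. You argue from $i_\omega=T_{\sigma^{-n}\omega}^{(n)}\circ\pi_{-n}$, equivariance, and $(i_\omega)_*\hat\mu_\omega=\mu_\omega$. That chain only shows that $(\pi_{-n})_*\hat\mu_\omega$ and $\mu_{\sigma^{-n}\omega}$ have the same pushforward under $T_{\sigma^{-n}\omega}^{(n)}$; since $T_{\sigma^{-n}\omega}^{(n)}$ is typically non-injective, this does not by itself force the two measures to coincide. The claim is nonetheless true, and the correct short argument uses the measure-preserving lemma established just before this one: write $\pi_{-n}=i_{\sigma^{-n}\omega}\circ\hat T_\omega^{(-n)}$, so $(\pi_{-n})_*\hat\mu_\omega=(i_{\sigma^{-n}\omega})_*\big((\hat T_\omega^{(-n)})_*\hat\mu_\omega\big)=(i_{\sigma^{-n}\omega})_*\hat\mu_{\sigma^{-n}\omega}=\mu_{\sigma^{-n}\omega}$. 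With that fix, the rest of your identifications (integrand and conditioning $\sigma$-algebra both pulled back through $\pi_{-n}$, reduction of the conditional expectation to the base, vanishing by~\eqref{lm}) are all correct, and the argument is complete.
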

\begin{proof}
For $B\in \mathcal B$, writing  $A=(i_{\sigma^{-(n-1)} \omega})^{-1}(B)$ we have that
\[
\begin{split}
&\int_{\hat T_{\sigma^{-(n-1)}\omega}^{(n-1)}(A) } \tilde m_{\sigma^{-n} \omega}\circ \hat T_\omega^{(-n)}\, d \hat{\mu}_\omega  \\
&=\int_{\hat M_\omega}\tilde m_{\sigma^{-n} \omega}\circ \hat T_\omega^{(-n)} \cdot \textbf{1}_A \circ (\hat T_{\sigma^{-(n-1)}\omega}^{(n-1)})^{-1}\, d\hat \mu_\omega \\
&=\int_{\hat M_{\sigma^{-n} \omega}}\tilde m_{\sigma^{-n} \omega} \cdot (\textbf{1}_A \circ \hat T_{\sigma^{-n} \omega})\,  d\hat \mu_{\sigma^{-n} \omega}\\
&=\int_{\hat M_{\sigma^{-n} \omega}} (m_{\sigma^{-n} \omega}  \circ i_{\sigma^{-n} \omega})\cdot (\textbf{1}_B \circ i_{\sigma^{-(n-1)}\omega} \circ \hat T_{\sigma^{-n} \omega})\,  d\hat \mu_{\sigma^{-n} \omega}\\
&=\int_{\hat M_{\sigma^{-n} \omega}} (m_{\sigma^{-n} \omega}  \circ i_{\sigma^{-n} \omega})\cdot (\textbf{1}_B \circ  T_{\sigma^{-n} \omega} \circ i_{\sigma^{-n} \omega})\,  d\hat \mu_{\sigma^{-n} \omega}\\
&=\int_M m_{\sigma^{-n} \omega} \cdot (\textbf{1}_B \circ  T_{\sigma^{-n} \omega}) \, d\mu_{\sigma^{-n} \omega}\\
&=\int_{(T_{\sigma^{-n}\omega})^{-1}(B)}m_{\sigma^{-n} \omega} \, d\mu_{\sigma^{-n} \omega}\\
&=\int_{(T_{\sigma^{-n}\omega})^{-1}(B)} \mathbb E_{\sigma^{-n} \omega} [m_{\sigma^{-n} \omega} \rvert (T_{\sigma^{-n}\omega})^{-1}(\mathcal B)]  \, d\mu_{\sigma^{-n} \omega} \\
&=0,
\end{split}
\]
where in the last step we used~\eqref{904}.
The proof of the lemma is completed. 
\end{proof}
For $\omega \in \Omega$ and $n\in \N$, set
\[
\tilde M_{\omega, n}^{-}(t):=\frac{1}{\sqrt n} \sum_{j=-[nt]}^{-1} \tilde m_{\sigma^j \omega}\circ \hat  T_\omega^{(j)}, \quad t\ge 0.
\]
Furthermore, for $1\le \beta, \gamma \le e$, we define
\[
\tilde{\mathbb M}_{\omega, n}^{\beta \gamma, -}(t):=\frac 1 n \sum_{-[nt] \le j <i \le -1} \tilde m_{\sigma^i \omega}^\beta \circ \hat T_\omega^{(i)} \cdot \tilde m_{\sigma^j \omega}^\gamma \circ \hat T_\omega^{(j)}, \quad t\ge 0.
\]
\begin{lemma}\label{tl}
Suppose $\omega \mapsto \|m_\om\|_{L^\infty(\mu_\omega)}\in L^3(\Omega,\mathcal F,\mathbb P)$. Then 
for $\mathbb P$-a.e. $\omega \in \Omega$, we have that $\tilde M_{\omega, n}^- \to_w W$ in $D([0, \infty), \R^e)$, where $W$ is the $e$-dimensional Brownian motion with the covariance matrix $\Cov (W(1))=\int_{\Omega \times M}mm^T \, d\mu$.
\end{lemma}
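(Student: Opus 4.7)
The plan is to apply a functional central limit theorem for reverse martingale difference arrays, with the underlying sequence identified by the preceding lemma. For each $\omega$ and $k\ge 1$, I set $X_k^\omega := \tilde m_{\sigma^{-k}\omega} \circ \hat T_\omega^{(-k)}$ on $(\hat M_\omega, \hat \mu_\omega)$; after the reindexing $k=-j$ the process of interest takes the form $\tilde M_{\omega,n}^-(t) = \frac{1}{\sqrt n}\sum_{k=1}^{[nt]} X_k^\omega$, and by the preceding lemma $(X_k^\omega)_{k\ge 1}$ is a reverse martingale difference sequence with respect to the decreasing filtration $\mathcal G_k^\omega := \hat T_{\sigma^{-(k-1)}\omega}^{(k-1)}\hat{\mathcal B}_{\sigma^{-(k-1)}\omega}$. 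The natural strategy is then to verify the two classical hypotheses of the martingale FCLT---quadratic variation convergence and a Lindeberg-type condition---fiberwise for $\mathbb P$-a.e. $\omega$.

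For the quadratic variation I would observe that $X_k^\omega(x) = \tilde m(\hat \tau^{-k}(\omega,x))$, where $\tilde m(\omega,x)=m_\omega(i_\omega(x))$. Since $\hat \mu$ is $\hat \tau$-invariant (hence $\hat \tau^{-1}$-invariant) and ergodic, Birkhoff's ergodic theorem on $(\Omega\times\hat M,\hat\mu,\hat\tau^{-1})$ will yield
\[
\frac{1}{n}\sum_{k=1}^{[nt]} X_k^\omega (X_k^\omega)^T \longrightarrow t\int_{\Omega\times\hat M}\tilde m\tilde m^T \, d\hat\mu = t\int_{\Omega\times M}mm^T \, d\mu,
\]
$\hat\mu$-a.e., and hence by disintegration for $\mathbb P$-a.e. $\omega$ and $\hat\mu_\omega$-a.e. $x$. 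For the Lindeberg condition, I would note that $\|X_k^\omega\|_{L^\infty(\hat\mu_\omega)} \le \|m_{\sigma^{-k}\omega}\|_{L^\infty(\mu_{\sigma^{-k}\omega})}$; the hypothesis $\omega\mapsto \|m_\omega\|_{L^\infty(\mu_\omega)}\in L^3(\mathbb P)$, together with $\sigma$-invariance of $\mathbb P$ and a Borel--Cantelli argument (using $\sum_k k^{-3/2}<\infty$), gives $\max_{1\le k\le n}\|X_k^\omega\|_{L^\infty(\hat\mu_\omega)} = o(\sqrt n)$ for $\mathbb P$-a.e. $\omega$, which forces the conditional Lindeberg sums to vanish in the limit.

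With both hypotheses in place, the desired functional convergence will follow from a standard martingale FCLT for triangular arrays, applied either directly to the reverse martingale or via time-reversal to an ordinary forward martingale array (e.g.\ Brown's theorem, or the reverse-martingale version in Hall--Heyde); the limit is the $e$-dimensional Brownian motion with covariance $\int mm^T \, d\mu$. The main obstacle will be the passage from an annealed-type statement to quenched convergence for $\mathbb P$-a.e.\ $\omega$: it is crucial that the two hypotheses above be verified fiberwise for $\mathbb P$-a.e.\ $\omega$, and that the limiting covariance be deterministic (independent of $\omega$), so that the FCLT can be invoked separately on each fiber $(\hat M_\omega,\hat\mu_\omega)$ with the same limiting law.
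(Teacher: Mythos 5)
Your overall strategy---identify the reverse martingale difference structure, verify variance and Lindeberg conditions via Birkhoff, and invoke a martingale FCLT---is the same as the paper's, but there is a genuine gap in the variance step. You apply Birkhoff's ergodic theorem to the observable $\tilde m\tilde m^T$ itself, which establishes convergence of the \emph{quadratic variation} $\frac{1}{n}\sum_{k=1}^{[nt]}X_k^\omega(X_k^\omega)^T$. Brown's Theorem 2, which you cite and which the paper actually uses, requires instead the convergence of the \emph{conditional variance process} $\frac{1}{n}\sum_{k=1}^{n}\hat{\mathbb E}_\omega[X_k^\omega(X_k^\omega)^T\mid\mathcal G_{k+1}^\omega]$ (together with the convergence of its $\hat\mu_\omega$-expectation). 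These are different objects, and the Birkhoff argument for one does not automatically transfer to the other. The paper deals with this by applying Birkhoff to the observable $\psi(\omega,\mathbf x):=\hat{\mathbb E}_\omega[\tilde m_\omega^\beta\tilde m_\omega^\gamma\mid(\hat T_\omega)^{-1}\hat{\mathcal B}_{\sigma\omega}](\mathbf x)$ along $(\hat\tau)^{-1}$, noting that the tower property gives $\int\psi\,d\hat\mu=\int m^\beta m^\gamma\,d\mu$, and by a separate Birkhoff application to the unconditional variances $\sigma_{\omega,n}^{\beta\gamma}$; both are needed for Brown's normalisation condition $V_{\omega,n}/\sigma_{\omega,n}\to 1$.

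Your quadratic variation estimate can be salvaged, but only by adding a step. Either invoke a quadratic-variation form of the martingale FCLT (McLeish-type) instead of Brown's Theorem 2, or bridge the two by noting that $\sum_{k\le n}\bigl[(X_k^\omega)^2-\hat{\mathbb E}_\omega[(X_k^\omega)^2\mid\mathcal G_{k+1}^\omega]\bigr]$ is itself a reverse martingale whose $L^2(\hat\mu_\omega)$-norm squared is controlled by $\sum_{k\le n}\|X_k^\omega\|_{L^\infty(\hat\mu_\omega)}^4\le\bigl(\max_{k\le n}\|X_k^\omega\|_{L^\infty(\hat\mu_\omega)}\bigr)\sum_{k\le n}\|X_k^\omega\|_{L^\infty(\hat\mu_\omega)}^3=o(\sqrt n)\cdot O(n)=o(n^{3/2})$, so that the difference is $o(n)$ in $L^2(\hat\mu_\omega)$ and the conditional variance inherits the quadratic variation's limit. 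Neither step appears in your write-up. On the positive side, your Lindeberg argument is correct and in fact cleaner than the paper's: the Borel--Cantelli estimate deduced from $\|m_\omega\|_{L^\infty(\mu_\omega)}\in L^3(\mathbb P)$ gives $\max_{k\le n}\|X_k^\omega\|_{L^\infty(\hat\mu_\omega)}=o(\sqrt n)$ a.s., so the Lindeberg sums are eventually identically zero, whereas the paper only shows they are $O(n^{-1/4})$ via H\"older and Markov.
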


\begin{proof}
Fix $1\le \beta, \gamma \le e$ and 
set
\[
V_{\omega, n}^{\beta\gamma}:=\sum_{j=-n}^{-1}\hat{\mathbb E}_\omega  [(\tilde m_{\sigma^j \omega}^\beta  \tilde m_{\sigma^j \omega}^\gamma)\circ  \hat{T}_\omega^{(j)}\rvert \hat{T}_{\sigma^{j+1}\omega}^{(-j-1)}\hat{\mathcal B}_{\sigma^{j+1}\omega}].
\]
Note that
\[
\hat{\mathbb E}_\omega  [(\tilde m_{\sigma^j \omega}^\beta  \tilde m_{\sigma^j \omega}^\gamma)\circ  \hat{T}_\omega^{(j)}\rvert \hat{T}_{\sigma^{j+1}\omega}^{(-j-1)}\hat{\mathcal B}_{\sigma^{j+1}\omega}]=
\hat{\mathbb E}_{\sigma^j \omega}[\tilde m_{\sigma^j \omega}^\beta  \tilde m_{\sigma^j \omega}^\gamma\rvert (\hat{T}_{\sigma^j \omega})^{-1}\hat{\mathcal B}_{\sigma^{j+1}\omega}]\circ \hat{T}_\omega^{(j)},
\]
and thus
\[
V_{\omega, n}^{\beta \gamma}=\sum_{j=-n}^{-1}\hat{\mathbb E}_{\sigma^j \omega}[\tilde m_{\sigma^j \omega}^\beta  \tilde m_{\sigma^j \omega}^\gamma\rvert (\hat{T}_{\sigma^j \omega})^{-1}\hat{\mathcal B}_{\sigma^{j+1}\omega}]\circ \hat{T}_\omega^{(j)}.
\]
We define an observable $\psi$ by
\[
\psi (\omega,  \mathbf x)=\hat{\mathbb E}_{ \omega}[\tilde m_\omega^\beta \tilde m_\omega^\gamma \rvert (\hat T_\omega)^{-1}\hat{\mathcal B}_{\sigma \omega}](\mathbf x), \quad \text{for $\omega \in \Omega$ and $\mathbf x\in \hat M_\omega$.}
\]
By applying Birkhoff's ergodic theorem for $\psi$, $(\hat{\tau})^{-1}$ and $\hat \mu$, we conclude that for $\mathbb P$-a.e. $\omega \in \Omega$, 
\begin{equation}\label{959}
\frac{V_{\omega, n}^{\beta \gamma}}{n} \to \int  \tilde m^\beta  \tilde m^\gamma \, d{\hat \mu}=\int m^\beta m^\gamma \, d\mu, \quad \text{$\hat \mu_\omega$-a.e.}
\end{equation}
Let us now set 
\[
\sigma_{\omega, n}^{\beta \gamma}:=\sum_{j=-n}^{-1} \int ( \tilde m_{\sigma^j \omega}^\beta \tilde m_{\sigma^j \omega}^\gamma) \circ \hat    T_\omega^{(j)}\, d \hat \mu_\omega=\sum_{j=-n}^{-1} \int  \tilde m_{\sigma^j \omega}^\beta  \tilde m_{\sigma^j \omega}^\gamma \, d \hat \mu_{\sigma^j \omega}.
\]
By Birkhoff's ergodic theorem, we have that for $\mathbb P$-a.e. $\omega \in \Omega$, 
\begin{equation}\label{960}
\frac{\sigma_{\omega, n}^{\beta \gamma}}{n} \to \int \tilde m^\beta \tilde m^\gamma \, d\hat \mu=\int  m^\beta m^\gamma\, d\mu.
\end{equation}

On the other hand, using H\"{o}lder's and Markov's inequalities, for an arbitrary $\epsilon >0$ and  $\mathbb P$-a.e. $\omega \in \Omega$ we see that, 
\[
\begin{split}
&\frac{1}{ n}\sum_{j=-n}^{-1}\int_{\hat M_\omega}( \tilde m_{\sigma^j \omega}^\beta  )^2\circ   \hat T_\omega^{(j)}\textbf{1}_{ \{\lvert (\tilde  m_{\sigma^j \omega}^\beta )^2\circ  \hat T_\omega^{(j)} \rvert \ge \epsilon \sqrt{n} \}}\, d\hat \mu_\omega \\
&\le \frac{1}{ n}\sum_{j=-n}^{-1}\lVert (\tilde  m_{\sigma^j \omega}^\beta  )^2 \circ   \hat T_\omega^{(j)}\rVert_{L^2(\hat \mu_\omega)} \cdot (\hat \mu_\omega \{\lvert ( \tilde m_{\sigma^j \omega}^\beta )^2\circ  \hat T_\omega^{(j)} \rvert \ge \epsilon \sqrt n\})^{1/2}\\
&\le \frac{1}{ n}\sum_{j=-n}^{-1}\lVert ( \tilde m_{\sigma^j \omega}^\beta )^2\circ \hat T_\omega^{(j)}\rVert_{L^2(\hat \mu_\omega)}\cdot \left (\frac{1}{\epsilon \sqrt n} \|(\tilde  m_{\sigma^j \omega}^\beta)^2\circ   \hat T_\omega^{(j)}\rVert_{L^1(\hat \mu_\omega)}\right )^{1/2}\\
&=\frac{1}{\sqrt{\epsilon} n^{5/4}}\sum_{j=-n}^{-1}\| (\tilde m_{\sigma^j \omega}^\beta)^2\|_{L^2(\hat \mu_{\sigma^j \omega})} \cdot \| (\tilde m_{\sigma^j \omega}^\beta)^2\rVert_{L^1(\hat \mu_{\sigma^j \omega})}^{1/2}\\
&\le \frac{1}{\sqrt{\epsilon} n^{5/4}}\sum_{j=-n}^{-1}\| (\tilde m_{\sigma^j \omega}^\beta )^2\|_{L^2(\hat \mu_{\sigma^j \omega})}^{3/2}\\
&\le  \frac{1}{\sqrt{\epsilon} n^{5/4}} \sum_{j=-n}^{-1}  \|m_{\sigma^j \omega}\|_{L^\infty(\mu_{\sigma^j\omega})}^3.
\end{split}
\]
Now, using Birkhoff's ergodic theorem we have $\sum_{j=-n}^{-1}\|m_{\sigma^j \omega}\|_{L^\infty(\mu_{\sigma^j\omega})}^3=O(n)$ 
and thus,
\begin{equation}\label{c2}
 \frac{1}{ n}\sum_{j=-n}^{-1}\int_{\hat M_\omega}( \tilde m_{\sigma^j \omega}^\beta  )^2\circ   \hat T_\omega^{(j)}\textbf{1}_{ \{\lvert (\tilde  m_{\sigma^j \omega}^\beta )^2\circ  \hat T_\omega^{(j)} \rvert \ge \epsilon \sqrt n \}}\, d\hat \mu_\omega\to 0,
\end{equation}
for $\mathbb P$-a.e. $\omega \in \Omega$. 
 The conclusion of the lemma follows from~\eqref{959}, \eqref{960}, \eqref{c2} and~\cite[Theorem 2]{Brown}.
\end{proof}

\begin{lemma}\label{l143}
Suppose $\omega \mapsto \|m_\om\|_{L^\infty(\mu_\om)}\in L^3(\Omega,\mathcal F,\mathbb P)$. Then 
for $\mathbb P$-a.e. $\omega \in \Omega$, we have that $(\tilde M_{\omega, n}^{-}, \tilde{ \mathbb{M}}_{\omega, n}^{-})\to_w (W, J)$ in $D([0, \infty), \R^e \times \R^{e\times e})$,
where $W$ is as in the statement of Lemma~\ref{tl} and $J^{\beta \gamma}(t)=\int_0^t W^\beta \, d W^\gamma$.
\end{lemma}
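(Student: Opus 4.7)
The plan is to upgrade the one-dimensional convergence of Lemma~\ref{tl} to the desired joint convergence via the Kurtz--Protter stability theorem for stochastic integrals driven by semimartingales, following the strategy adopted in~\cite{KM} in the deterministic setting. The key observation is that, after the time-reversal $k=-j$, the lemma preceding Lemma~\ref{tl} implies that $\xi_k := \tilde m_{\sigma^{-k}\omega}\circ \hat T_\omega^{(-k)}$, $k \ge 1$, is a (forward) martingale difference sequence on $(\hat M_\omega, \hat \mu_\omega)$ with respect to a naturally associated increasing filtration. Consequently, $\tilde M_{\omega, n}^-(t) = n^{-1/2}\sum_{k=1}^{[nt]}\xi_k$ is a c\`adl\`ag martingale, and regrouping the double sum defining $\tilde{\mathbb M}_{\omega, n}^{\beta\gamma, -}$ by its outer index yields the discrete It\^o representation
\begin{equation*}
\tilde{\mathbb M}_{\omega, n}^{\beta\gamma, -}(t) = \int_0^t \tilde M_{\omega, n}^{\beta, -}(s-)\, d\tilde M_{\omega, n}^{\gamma, -}(s),
\end{equation*}
interpreted as a Riemann--Stieltjes sum against the piecewise constant integrator.

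With this representation in hand, the Kurtz--Protter continuous mapping theorem for stochastic integrals guarantees that, provided the martingales $\tilde M_{\omega, n}^-$ converge weakly to a continuous limit $W$ (furnished by Lemma~\ref{tl}) and satisfy the Uniformly Controlled Variations (UCV) condition, the joint weak convergence
\begin{equation*}
(\tilde M_{\omega, n}^-, \tilde{\mathbb M}_{\omega, n}^-) \to_w \Bigl(W, \int_0^\cdot W^\beta(s)\, dW^\gamma(s)\Bigr)
\end{equation*}
holds in $D([0,\infty), \R^e \times \R^{e\times e})$. Since the limit process $W$ is a.s.\ continuous, $W(s-) = W(s)$ and the right-hand side coincides with $(W, I)$, which is the conclusion we seek.

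The main obstacle is the verification of UCV for the martingales $\tilde M_{\omega, n}^-$. For $L^2$-martingales, this condition reduces to tightness of the predictable quadratic variations and asymptotic negligibility of the maximal jump size. The hypothesis $\omega \mapsto \|m_\omega\|_{L^\infty(\mu_\omega)} \in L^3(\Omega,\mathcal F,\mathbb P)$ furnishes both: on the one hand, the predictable quadratic variation at time $t$ is dominated pathwise by $\tfrac 1 n \sum_{j=-[nt]}^{-1}\|m_{\sigma^j\omega}\|_{L^\infty(\mu_{\sigma^j \omega})}^2$, which by Birkhoff's ergodic theorem converges $\mathbb P$-a.s., and is therefore tight; on the other hand, the maximal jump of $\tilde M_{\omega, n}^-$ on $[0,T]$ is bounded by $n^{-1/2}\max_{-[nT] \le j \le -1}\|m_{\sigma^j\omega}\|_{L^\infty(\mu_{\sigma^j \omega})}$, and a standard Borel--Cantelli argument using the $L^3$-moment yields $\max_{-n \le j \le -1}\|m_{\sigma^j \omega}\|_{L^\infty(\mu_{\sigma^j \omega})} = o(n^{1/3})$ $\mathbb P$-a.s., so the maximal jump vanishes a.s.\ as $n\to\infty$. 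These two estimates together establish UCV and close the argument.
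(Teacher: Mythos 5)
Your proposal is correct and follows essentially the same route as the paper: both recognize $\tilde M_{\omega,n}^{-}$ as a c\`adl\`ag martingale (via the lemma preceding Lemma~\ref{tl}), represent $\tilde{\mathbb M}_{\omega,n}^{\beta\gamma,-}$ as the discrete stochastic integral $\int_0^t \tilde M_{\omega,n}^{\beta,-}(s-)\,d\tilde M_{\omega,n}^{\gamma,-}(s)$, and invoke the Kurtz--Protter stability theorem~\cite{KP}. The only cosmetic difference is in how the good-integrator hypothesis is checked: you verify UCV by combining tightness of the predictable quadratic variation with negligibility of the maximal jump (both via Birkhoff and the $L^3$ hypothesis), while the paper verifies the simpler sufficient condition of~\cite[Theorem 2.2 and Remark 2.3]{KP} with $\delta=\infty$, $A_n^\delta=0$, by showing $\sup_n \int(\tilde M_{\omega,n}^{\gamma,-}(t))^2\,d\hat\mu_\omega<\infty$ directly from orthogonality of the martingale increments and Birkhoff's theorem.
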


\begin{proof}
For $t\ge 0$ and $1\le \gamma \le e$, we have that
\[
\begin{split}
\int (\tilde M_{\omega, n}^{\gamma, -}(t))^2 \, d\hat{\mu}_\omega &=\frac 1 n \int \bigg ( \sum_{j=-[nt]}^{-1}\tilde m_{\sigma^j \omega}^\gamma \circ \hat{T}_\omega^{(j)} \bigg{)}^2\, d\hat{\mu}_\omega  \\
&=\frac 1 n \sum_{j=-[nt]}^{-1}\int  (\tilde m_{\sigma^j \omega}^\gamma)^2 \circ \hat{T}_\omega^{(j)}\, d\hat{\mu}_\omega \\
&=\frac{1}{n}\sum_{j=-[nt]}^{-1}\int  (\tilde m_{\sigma^j \omega}^\gamma)^2 \, d\hat{\mu}_{\sigma^j \omega},
\end{split}
\]
and thus it follows from Birkhoff's ergodic theorem (as in~\eqref{960}) that 
\[
\int (\tilde M_{\omega, n}^{\gamma, -}(t))^2 \, d\hat{\mu}_\omega \to t\int (\tilde m^\gamma )^2\, d\hat{\mu},
\]
for $\mathbb P$-a.e. $\omega \in \Omega$.  In particular, for $\mathbb P$-a.e. $\omega \in \Omega$, $\sup_{n}\int (\tilde M_{\omega, n}^{\gamma, -}(t))^2 \, d\hat{\mu}_\omega<+\infty$. 
The conclusion of the lemma now follows from Lemma~\ref{martingale} (which ensures that $\tilde M^-_{\omega, n}$ is a martingale) and~\cite[Theorem 2.2 and Remark 2.3]{KP} applied for $\delta=\infty$ and $A_n^{\delta}=0$. We note that above we verified that condition~\cite[(C2.2)(iii)]{KP} is in force.
\end{proof}
We now wish to relate the convergence of $(\tilde M_{\omega, n}^{-}, \tilde{ \mathbb{M}}_{\omega, n}^{-})$ and $(\tilde M_{\omega, n}, \tilde{ \mathbb{M}}_{\omega, n})$, where
$\tilde M_{\omega, n}(t)$ and $\tilde{\mathbb M}_{\omega, n}^{\beta \gamma}(t)$ are given by~\eqref{W} and~\eqref{mathbW} by replacing $v$ with $\tilde m$.
The following is a version of~\cite[Lemma 4.8]{KM}. 
\begin{lemma}\label{l142}
Let $T\in \N$ and
suppose that  $\omega \mapsto \|m_\om\|_{L^\infty(\mu_\omega)}\in L^{4}(\Omega,\mathcal F,\mathbb P)$.
Let $g(u)(t)=u(T)-u(T-t)$ and $h(v)(t)=v(T-t)(v(T)-v(T-t))$, Furthermore, let $\ast$ denote matrix transpose in $\R^{e\times e}$. Then, for $\mathbb P$-a.e. $\omega \in \Omega$ and every $n\in \N$ we have that
\[
\sup_{t\in[0,T]}\left|(\tilde M_{\omega, n}, \tilde{\mathbb M}_{\omega, n})(t)\circ \hat T_{\sigma^{nT}\om}^{(-nT)}-\bigg{(}g( \tilde M^{-}_{\sigma^{nT}\omega, n}), \big ( g(\tilde{\mathbb M}_{\sigma^{nT}\omega, n}^- )-h (\tilde M_{\sigma^{nT} \omega, n}^-) \big )^*(t)\right|
\]
$$
\leq F_{\om,n}\circ  \hat T_{\sigma^{nT}\om}^{(-nT)},
$$
where ($\mathbb P$-a.s.)
$$
\lim_{n\to\infty}F_{\om,n}=0, \,\,\hat\mu_\om\text{-a.s.}
$$
\end{lemma}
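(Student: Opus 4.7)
The strategy is to first establish the stated identity \emph{exactly} at the lattice times $t = j/n$, $j \in \{0,1,\ldots,nT\}$, and then control the interpolation error on each interval $(j/n,(j+1)/n)$ by a function $F_{\omega,n}$ that combines a maximum of $\|m_{\sigma^j\omega}\|_{L^\infty}$ with a running supremum of $\tilde M_{\omega,n}$. For the identification, write $\mathbf y := \hat T^{(-nT)}_{\sigma^{nT}\omega}\mathbf x\in\hat M_\omega$ and use the pointwise identity
\[
\tilde m_{\sigma^k\omega}\circ \hat T^{(k-nT)}_{\sigma^{nT}\omega}(\mathbf x) \;=\; m_{\sigma^k\omega}(y_k) \;=\; \tilde m\circ\hat\tau^k(\omega,\mathbf y), \qquad k = 0, \ldots, nT.
\]
Under the substitution $\ell = k - nT$ the backward processes become forward processes with base $\omega$: setting $K := nT - [n(T-t)]$, one finds $g(\tilde M^-_{\sigma^{nT}\omega,n})(t)(\mathbf x) = \tfrac 1{\sqrt n}\sum_{k=0}^{K-1}\tilde m\circ\hat\tau^k(\omega,\mathbf y)$, whereas $\tilde M_{\omega,n}(t)\circ\hat T^{(-nT)}_{\sigma^{nT}\omega}(\mathbf x) = \tfrac 1{\sqrt n}\sum_{k=0}^{[nt]-1}\tilde m\circ\hat\tau^k(\omega,\mathbf y)$. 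Since $K = [nt]$ precisely when $nt\in\Z$, the linear identity holds at $t=j/n$. For the iterated sum, the index set $\{0\le k<\ell\le nT-1,\,k<K\}$ defining $g(\tilde{\mathbb M}^-)^{\beta\gamma}(t)$ splits as $\{k<\ell<K\}\sqcup\{k<K\le\ell\}$; the second rectangle factors as $\tilde M^{-,\beta}_{\sigma^{nT}\omega,n}(T-t)\cdot g(\tilde M^-)^\gamma(t) = h(\tilde M^-)^{\beta\gamma}(t)$, so the first triangle equals $g(\tilde{\mathbb M}^-)^{\beta\gamma}-h(\tilde M^-)^{\beta\gamma}$. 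Applying the matrix transpose (which exchanges $\beta,\gamma$) converts this triangle into $\tilde{\mathbb M}^{\beta\gamma}_{\omega,n}(t)\circ\hat T^{(-nT)}_{\sigma^{nT}\omega}$ exactly when $K = [nt]$.

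For $nt\notin\Z$ one has $K = [nt]+1$, and the identities above pick up exactly one boundary term: a single summand for the linear part and a single row (indexed by $\ell = [nt]$) for the iterated part. Rewriting the row-sum via $\sum_{i=0}^{[nt]-1}\tilde m^\beta\circ\hat\tau^i(\omega,\mathbf y) = \sqrt n\,\tilde M^\beta_{\omega,n}([nt]/n)(\mathbf y)$ gives the pointwise bounds
\[
\bigl|\tilde M_{\omega,n}(t) - g(\tilde M^-_{\sigma^{nT}\omega,n})(t)\bigr|(\mathbf x) \;\le\; \tfrac{1}{\sqrt n}\,\|m_{\sigma^{[nt]}\omega}\|_{L^\infty(\mu_{\sigma^{[nt]}\omega})}
\]
and
\[
\bigl|\tilde{\mathbb M}_{\omega,n}^{\beta\gamma}(t) - \bigl((g(\tilde{\mathbb M}^-)-h(\tilde M^-))^{*}\bigr)^{\beta\gamma}(t)\bigr|(\mathbf x) \;\le\; \tfrac{1}{\sqrt n}\,\|m_{\sigma^{[nt]}\omega}\|_{L^\infty}\,\bigl|\tilde M^\beta_{\omega,n}([nt]/n)(\mathbf y)\bigr|.
\]
Setting
\[
F_{\omega,n}(\mathbf y) \;:=\; \frac{C_e}{\sqrt n}\Bigl(\max_{0\le j\le nT}\|m_{\sigma^j\omega}\|_{L^\infty(\mu_{\sigma^j\omega})}\Bigr)\Bigl(1 + \max_{1\le\beta\le e}\sup_{s\in[0,T]}|\tilde M^\beta_{\omega,n}(s)(\mathbf y)|\Bigr),
\]
with $C_e$ depending only on $e$, yields $\sup_{t\in[0,T]}|\cdots|(\mathbf x) \le F_{\omega,n}\circ\hat T^{(-nT)}_{\sigma^{nT}\omega}(\mathbf x)$.

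It remains to verify $F_{\omega,n}\to 0$ $\hat\mu_\omega$-a.s. Since $\omega\mapsto\|m_\omega\|_{L^\infty}\in L^4(\mathbb P)$, the standard maximal ergodic argument (Kingman--Steele, or $L^4$-Borel--Cantelli applied to the $\sigma$-stationary sequence $\{\|m_{\sigma^j\omega}\|_{L^\infty}\}$) gives $\max_{0\le j\le nT}\|m_{\sigma^j\omega}\|_{L^\infty}=o(n^{1/4})$ $\mathbb P$-a.s. For the running supremum, Lemma~\ref{MD} identifies $\{\tilde m^\beta_{\sigma^i\omega}\circ\hat T^{(i)}_\omega\}_{i\ge 0}$ as a reverse-martingale-difference sequence on $(\hat M_\omega,\hat\mu_\omega)$, so Doob's $L^2$ maximal inequality combined with Birkhoff for $\|m_{\sigma^i\omega}\|_{L^2(\mu_{\sigma^i\omega})}^2\le\|m_{\sigma^i\omega}\|_{L^\infty}^2$ yields
\[
\hat{\mathbb E}_\omega\Bigl[\sup_{s\in[0,T]}|\tilde M^\beta_{\omega,n}(s)|^2\Bigr] \;\le\; \frac{4}{n}\sum_{i=0}^{nT-1}\|m_{\sigma^i\omega}\|^2_{L^\infty} \;=\; O(1), \qquad \mathbb P\text{-a.e.\ }\omega.
\]
Chebyshev and Borel--Cantelli along the dyadic subsequence $n_k = 2^k$, together with the monotonicity in $n$ of the partial-sum maxima, upgrade this to $\sup_{s\in[0,T]}|\tilde M^\beta_{\omega,n}(s)(\mathbf y)| = O(n^\varepsilon)$ for every $\varepsilon > 0$, $\hat\mu_\omega$-a.s. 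Hence $F_{\omega,n} = o(n^{-1/4+\varepsilon})\to 0$ for any $\varepsilon\in(0,1/4)$.

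The main obstacle is the combinatorial identification of the first step: one must carefully match the forward processes $(\tilde M_{\omega,n},\tilde{\mathbb M}^{\beta\gamma}_{\omega,n})$ pulled back by $\hat T^{(-nT)}$ with the reversed processes $(\tilde M^-_{\sigma^{nT}\omega,n},\tilde{\mathbb M}^{\beta\gamma,-}_{\sigma^{nT}\omega,n})$, tracking how the matrix transpose and the quadratic-covariation correction $h(\tilde M^-)$ emerge from the triangular decomposition of the reversed double sum. The bounds of Steps~2--3 are then routine, with one subtlety worth flagging: a naive absolute-value estimate of the boundary row grows like $n$ and spoils the argument, so it is essential to recognize the row-sum as $\sqrt n$ times the running martingale $\tilde M^\beta_{\omega,n}([nt]/n)$ and invoke Doob's maximal inequality to obtain the a.s.\ $O(n^\varepsilon)$ control.
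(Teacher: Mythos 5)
Your proof is correct and follows the same overall route as the paper: translate the forward processes under $\hat T^{(-nT)}_{\sigma^{nT}\omega}$ into reversed processes with base $\sigma^{nT}\omega$ via the index shift $k\mapsto k-nT$, split the reversed double sum into a triangle (matching $\tilde{\mathbb M}_{\omega,n}$ after transpose) and a rectangle (giving $h(\tilde M^-)$), and control the remaining boundary terms using the reverse-martingale structure together with the Birkhoff $o(n^{1/4})$ control on $\|m_{\sigma^j\omega}\|_{L^\infty}$. There are, however, two genuine simplifications in your version worth recording. First, the bookkeeping: you observe directly that when $K:=nT-[n(T-t)]$ equals $[nt]$ the linear and iterated identities are exact, and when $K=[nt]+1$ the discrepancy is a single summand (linear part) and a single row $\ell=[nt]$ (iterated part), which you rewrite as $n^{-1/2}\tilde m^{\gamma}_{\sigma^{[nt]}\omega}\cdot\tilde M^{\beta}_{\omega,n}([nt]/n)$. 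The paper instead introduces two correction terms $G^{2}_{\omega,n}$ and $F^{3}_{\omega,n}$, which are in fact the same expression and cancel in the decomposition; the paper's bound for $G^2_{\omega,n}$ (via weak convergence of $\tilde M_{\omega,n}$) only gives control in probability, but this is saved by the separate a.s.\ estimate of $F^3_{\omega,n}$. Your version avoids this redundancy. Second, the maximal estimate: you use Doob's $L^2$ inequality combined with orthogonality of the reverse-martingale differences, Chebyshev, Borel--Cantelli along dyadics, and monotonicity of the running maxima, to get $\sup_{s\le T}|\tilde M_{\omega,n}(s)|=O(n^{\epsilon})$ a.s.\ for every $\epsilon>0$. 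The paper instead invokes Doob in $L^{4+\epsilon}$ together with Burkholder and a Borel--Cantelli argument over all $n$, obtaining the weaker (but sufficient) bound $O(n^{1/4})$. Both approaches exploit the same ingredient (Lemma~\ref{MD} lifted to $\hat M_\omega$, giving a reverse-martingale-difference structure for $\{\tilde m^\beta_{\sigma^i\omega}\circ\hat T^{(i)}_\omega\}$), but your $L^2$ route is lighter and equally valid under the $L^4$ assumption, which is still needed for the $o(n^{1/4})$ control on $\max_{j\le nT}\|m_{\sigma^j\omega}\|_{L^\infty}$.
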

\begin{proof}
We have that
\begin{equation}\label{h1a}
\begin{split}
\tilde M_{\omega, n}(t)\circ \hat T_{\sigma^{nT} \omega}^{(-nT)} &=\frac{1}{\sqrt n}\sum_{j=0}^{[nt]-1}\tilde m_{\sigma^ j \omega}\circ \hat T_\omega^{(j)}\circ \hat T_{\sigma^{nT} \omega}^{(-nT)} \\
&=\frac{1}{\sqrt n}\sum_{j=0}^{[nt]-1}\tilde m_{\sigma^ j \omega} \circ \hat T_{\sigma^{nT}\omega}^{(-nT+j)}\\
&=\frac{1}{\sqrt n}\sum_{j=-nT}^{[nt]-1-nT}\tilde m_{\sigma^j (\sigma^{nT} \omega)} \circ \hat T_{\sigma^{nT} \omega}^{(j)}.
\end{split}
\end{equation}
Hence,
\begin{equation}\label{h1b}
\tilde M_{\omega, n}(t) \circ \hat T_{\sigma^{nT} \omega}^{(-nT)}=\tilde M_{\sigma^{nT}\omega, n}^-(T)-\tilde M_{\sigma^{nT}\omega, n}^-(T-t)+G_{\omega, n}^0 (t), 
\end{equation}
where $G_{\omega, n}^0 (t)$ consists of at most one term and  
\[
\sup_{t\in [0, T]}|G_{\om, n}^0(t)| \le \frac{1}{\sqrt n}\sup_{t\in [0, T]}\lvert \tilde m_{\sigma^{[nt]-1}\omega}\circ \hat T_{\sigma^{nT} \omega}^{([nt]-1-nT)}\rvert \le F_{\omega, n}^0\circ \hat T_{\sigma^{nT}\omega}^{(-nT)} \quad \text{$\hat \mu_\omega$-a.e.},
\]
with \[F_{\omega, n}^0=\frac{1}{\sqrt n}\sup_{t\in [0, T]}\|m_{\sigma^{[nt]-1}\omega}\|_{L^\infty(\mu_{\sigma^{[nt]-1}\om})}.\]
By  Birkhoff's ergodic theorem, we have that  $\frac 1 n \|m_{\sigma^{n-1} \omega}\|_{L^\infty(\mu_{\sigma^{n-1}\om})}^2 \to 0$ for $\mathbb P$-a.e. $\omega \in \Omega$.
This readily implies that $ F_{\omega, n}^0 \to 0$,\, $\hat \mu_\omega$-a.e.
On the other hand, we have that 
\[
\begin{split}
& \tilde{\mathbb M}_{\omega, n}^{\beta \gamma} (t)\circ \hat{T}_{\sigma^{nT} \omega}^{(-nT)} \\
 &= \frac 1 n \bigg{(}\sum_{j=1}^{[nt]-1}\sum_{i=0}^{j-1} \tilde m_{\sigma^i \omega}^\beta \circ \hat T_\omega^{(i)} \cdot \tilde m_{\sigma^j \omega}^\gamma \circ \hat T_\omega^{(j)} \bigg )\circ \hat{T}_{\sigma^{nT} \omega}^{(-nT)}\\
&=\frac 1 n \sum_{j=-nT+1}^{[nt]-1-nT}\sum_{i=-nT}^{j-1} \tilde m_{\sigma^i (\sigma^{nT} \omega)}^\beta \circ \hat T_{\sigma^{nT}\omega}^{(i)} \cdot \tilde m_{\sigma^j (\sigma^{nT}\omega)}^\gamma \circ \hat T_{\sigma^{nT}\omega}^{(j)} \\
&=\frac 1n \bigg (\sum_{j=-nT+1}^{-1}-\sum_{j=[nt]-nT+1}^{-1}-\sum_{j=[nt]-nT}^{[nt]-nT} \bigg )\sum_{i=-nT}^{j-1} \tilde m_{\sigma^i (\sigma^{nT} \omega)}^\beta \circ \hat T_{\sigma^{nT}\omega}^{(i)} \cdot \tilde m_{\sigma^j (\sigma^{nT}\omega)}^\gamma \circ \hat T_{\sigma^{nT}\omega}^{(j)} \\
&=\tilde{\mathbb M}_{\sigma^{nT}\omega, n}^{\gamma \beta, -}(T)-E_{\omega, n}(t)-G_{\omega, n}^2(t), 
\end{split}
\]
where 
\[
G_{\omega, n}^2(t)=\bigg{(}\frac{1}{\sqrt n}\sum_{i=-nT}^{[nt]-nT-1}\tilde m_{\sigma^i (\sigma^{nT} \omega)}^\beta \circ \hat T_{\sigma^{nT}\omega}^{(i)} \bigg )\cdot \bigg (\frac{1}{\sqrt n}\tilde m_{\sigma^{[nt]}\omega}^\gamma \circ \hat T_{\sigma^{nT}\omega}^{([nt]-nT)}\bigg ),
\]
and
\[
E_{\omega, n}(t)=\frac 1 n \sum_{j=[nt]-nT+1}^{-1} \sum_{i=-nT}^{j-1} \tilde m_{\sigma^i (\sigma^{nT} \omega)}^\beta \circ \hat T_{\sigma^{nT}\omega}^{(i)}\cdot  \tilde m_{\sigma^j (\sigma^{nT}\omega)}^\gamma \circ \hat T_{\sigma^{nT} \omega}^{(j)}.
\]
Next, note   that 
\[
\frac{1}{\sqrt n}\sum_{i=-nT}^{[nt]-nT-1}\tilde m_{\sigma^i (\sigma^{nT} \omega)}^\beta \circ \hat T_{\sigma^{nT}\omega}^{(i)}=\tilde M_{\omega, n}^\beta(t)\circ \hat T_{\sigma^{nT}\omega}^{(-nT)}.
\]
Hence,
\[
\sup_{t\in [0, T]}|G_{\omega, n}^2(t)| \le F_{\omega, n}^2 \circ T_{\sigma^{nT}\om}^{(-nT)},
\]
where 
\[
F_{\omega, n}^2=\frac{1}{\sqrt n}\sup_{t\in [0, T]}(|\tilde M_{\omega, n}^\beta(t)| \cdot \|m_{\sigma^{[nt]}\om}\|_{L^\infty (\mu_{\sigma^{[nt]}\om})}).
\]
One can show that $\tilde M_{\omega, n}(t)$ converges to $W$ weakly. Indeed, this can be proved by arguing as in the first part of the proof of the current lemma (\eqref{h1a}, \eqref{h1b} and the arguments following it),  Corollary~\ref{corr} and Lemma~\ref{411} by ignoring the second component of the process. This now easily implies that $F_{\omega, n}^2\to 0$.
Let us now estimate $E_{\omega,n}(t)$. Notice first that since $T\in\N$, either $nT-[nt]=[n(T-t)]$ or $nT-[nt]=1+[n(T-t)]$. For the sake of simplicity let us estimate first 
$E_{\omega, n}(t)$ when $nT-[nt]=[n(T-t)]$. In this case we have
\[
\begin{split}
E_{\omega, n}(t) &=\frac 1 n \sum_{j=[nt]-nT+1}^{-1} \bigg (\sum_{i=-nT}^{-nT+[nt]-1}+\sum_{i=-nT+[nt]}^{j-1} \bigg )\tilde m_{\sigma^i (\sigma^{nT} \omega)}^\beta \circ \hat T_{\sigma^{nT}\omega}^{(i)}\cdot \tilde m_{\sigma^j (\sigma^{nT}\omega)}^\gamma \circ \hat T_{\sigma^{nT} \omega}^{(j)} \\
&=H_{\omega, n}(t)+\tilde{\mathbb M}_{\sigma^{nT} \omega, n}^{\gamma \beta, -}(T-t)-F_{\omega, n}^3(t), 
\end{split}
\]
where
\[
\begin{split}
H_{\omega, n}(t) &= \bigg ( \frac{1}{\sqrt n}\sum_{j=[nt]-nT}^{-1}\tilde m_{\sigma^j (\sigma^{nT}\omega)}^\gamma \circ \hat T_{\sigma^{nT} \omega}^{(j)} \bigg ) \bigg (\frac{1}{\sqrt n}\sum_{i=-nT}^{-nT+[nt]-1}\tilde m_{\sigma^i (\sigma^{nT} \omega)}^\beta \circ \hat T_{\sigma^{nT}\omega}^{(i)} \bigg ) \\
&=\tilde M_{\sigma^{nT}\omega, n}^{\gamma, -}(T-t) \bigg (\tilde M_{\sigma^{nT} \omega, n}^{\beta, -}(T)-\tilde M_{\sigma^{nT} \omega, n}^{\beta, -}(T-t) \bigg ),
\end{split}
\]
and 
\[
\begin{split}
F_{\omega, n}^3(t) &=\frac 1 n \sum_{i=-nT}^{-nT+[nt]-1}\tilde m_{\sigma^i (\sigma^{nT} \omega)}^\beta \circ \hat T_{\sigma^{nT}\omega}^{(i)} \cdot \tilde m_{\sigma^{[nt]}\omega }^\gamma \circ \hat T_{\sigma^{nT} \omega}^{([nt]-nT)} \\
&=\frac 1 n\tilde m_{\sigma^{[nt]}\omega }^\gamma\circ \hat T_{\sigma^{nT}\omega}^{([nt]-nT)}\left(\sum_{k=0}^{[nt]-1}\tilde m^\beta_{\sigma^k\om}\circ \hat T_{\sigma^{nT}\om}^{(k-nT)}\right).
\end{split}
\]
Taking into account that $(\hat T_\omega^{(nT)})_*\hat \mu_{\omega}=\hat \mu_{\sigma^{nT}\omega}$ 
and that $\hat T_{\omega}^{(nT)}=(\hat T_{\sigma^{nT}\omega}^{(-nT)})^{-1}$, in order to complete the proof of the lemma it is enough to show that
\begin{equation}\label{F30}
\sup_{t\in [0, T]}\lvert F_{\omega, n}^3(t)\circ \hat T_{\om}^{(nT)}\rvert  \to 0, \,\hat\mu_\om \text{ a.s.}.   
\end{equation}
To prove \eqref{F30} let 
$$
\mathcal M_{\omega, n}(t)=\sum_{k=0}^{[nt]-1}\tilde m^\beta_{\sigma^k\om}\circ \hat T_{\sigma^{nT}\om}^{(k-nT)}.
$$
Then $\mathcal M_{\omega, n}(t)\circ \hat T_\om^{(nT)}$
is a martingale on the space $(\hat M_\omega, \hat{\mathcal B}_\omega, \hat\mu_\omega)$. 
Denote 
$$
S_{\omega,n}=\sup_{t\in[0,T]}|\mathcal M_{\omega,n}(t)\circ \hat T_\om^{(nT)}|,
$$
which for every $n$ and $\omega$ is a function on $\hat M_\omega$. Let $p>4$. 
Then by Doob's martingale inequality, 
there is a constant $C_p>0$ which depends only on $p$ such that
$$
\left\|S_{\omega,n}\right\|_{L^p(\hat \mu_{\omega})}\leq C_p\|\mathcal M_{\omega,n}(T)\circ \hat T_\om^{(nT)}\|_{L^p(\hat\mu_{\omega})}.
$$
Next, by Burkholder's inequality, 
for each fixed $\om$ we have 
\[
\begin{split}
\|\mathcal M_{\om,n}(T)\|_{L^p(\hat \mu_{\sigma^{nT}\omega})}&\leq\left\|\sum_{k=0}^{nT-1}(\tilde m^\beta_{\sigma^k\om}\circ \hat T_{\sigma^{nT}\om}^{(k-nT)})^2\right\|_{L^{p/2}(\hat \mu_{\sigma^{nT}\omega})}^{1/2}\\
&\leq\left(\sum_{k=0}^{nT-1}\|(\tilde m^\beta_{\sigma^k\om})^2\|_{L^{p/2}(\hat \mu_{\sigma^k \omega})}\right)^{1/2}.
\end{split}
\]
Now, let us write $p=4+\epsilon$ for some $\epsilon>0$. 
Since $\|m_\omega \|_{L^\infty(\mu_\om)}\in L^2(\Omega, \mathcal F, \mathbb P)$ and $\|(m_\om^\beta)^2\|_{L^{p/2}(\mu_\om)}\le \|m_\om^\beta \|_{L^\infty(\mu_\om)}^2$,
for $\mathbb P$-a.e. $\omega \in \Omega$ we have
$$
\sum_{k=0}^{n-1}\|(m^\beta_{\sigma^k\omega})^2\|_{L^{p/2}( \mu_{\sigma^k \omega})}=O(n) .
$$
Thus, for $\mathbb P$ a.e. $\omega$ we have
$$
\|\mathcal S_{\omega,n}\|_{L^p(\hat\mu_{\omega})}=O(\sqrt n)
$$
and so by the Markov inequality,
$$
\hat\mu_{\omega}(|\mathcal S_{\omega,n}|\geq n^{3/4})=
\hat\mu_{\omega}(|\mathcal S_{\omega, n}|^p\geq n^{3p/4})\leq C n^{-p/4}=Cn^{-1-\epsilon/4}.
$$
Thus, by the Borel-Cantelli lemma applied with respect to the probability measure $\hat \mu_\omega$ for $\mathbb P$ a.e. $\omega$ we have
$$
\mathcal S_{\om,n}=O(n^{3/4}), \quad \hat\mu_\om \,\text{ a.s.}
$$
Finally,
$$
\sup_{t\in[0,T]}\left|\tilde m_{\sigma^{[nt]}\om}^\gamma\circ \hat T_{\om}^{[nt]}\right|\leq 
\sup_{t\in[0,T]}\|\tilde m^\gamma_{\sigma^{[nt]}\omega}\|_{L^\infty(\hat\mu_{\sigma^{[nt]}\omega})}
$$
$$
=\sup_{t\in[0,T]}\| m^\gamma_{\sigma^{[nt]}\omega}\|_{L^\infty(\mu_{\sigma^{[nt]}\omega})}=o(n^{1/4}),
$$
where the last step follows since $\omega\to \| m^\gamma_\om\|_{L^\infty(\mu_\omega)}$ belongs to $L^4(\Omega,\mathcal F,\mathbb P)$. 
Combining the last two estimates and using second expression for $F_{\om,n}^3(t)$ we obtain \eqref{F30}.

To estimate $E_{\om,t}(t)$ by $\tilde{\mathbb M}_{\sigma^{nT} \omega, n}^{\gamma \beta, -}(T-t)$ in the case when $[n(T-t)]=Tn-[tn]-1$ we first note that we need to add to the previous expression for  $E_{\om,t}(t)-\tilde{\mathbb M}_{\sigma^{nT} \omega, n}^{\gamma \beta, -}(T-t)$  the following term 
\[
\begin{split}
R_{\omega, n}&=\frac 1 n\sum_{j=[nt]-nT+2}\tilde m_{\sigma^j(\sigma^{nT}\om)}^\gamma \circ \hat T_{\sigma^{nT}\omega}^{(j)}\tilde m^\beta_{\sigma^{[nt]} \om} \circ T_{\sigma^{nT}\om}^{([nt]-NT)}\\
&\phantom{=}+\frac 1 n \tilde m^\beta_{\sigma^{[nt]}\om}\circ \hat T_{\sigma^{nT}\om}^{([nt]-nT)}\cdot \tilde m^\gamma_{\sigma^{[nt]+1}\om}\circ \hat T_{\sigma^{nT}\om}^{([nt]-nT+1)}.
\end{split}
\]
Arguing like in the previous parts of the proof it will follow that 
\[
\sup_{t\in [0, T]}\lvert R_{\omega, n}(t)\circ \hat T_{\om}^{([nT])}\rvert  \to 0, \,\hat\mu_\om \text{ a.s.}
\]
Moreover,  note that when $[n(T-t)]=Tn-[tn]-1$ there is a similar correction term $\mathcal R_{\om,n}(t)$ in the second formula for $H_{\om,t}(t)$, which will also satisfy 
\[
\sup_{t\in [0, T]}\lvert \mathcal R_{\omega, n}(t)\circ \hat T_{\om}^{([nT])}\rvert  \to 0, \,\hat\mu_\om \text{ a.s.}
\]
\end{proof}

\begin{cor}\label{corr}
In the circumstances of both Lemma~\ref{l143} and Lemma \ref{l142}, 
for $\mathbb P$-a.e. $\omega \in \Omega$, 
\[
(\tilde M_{\omega, n}, \tilde{\mathbb M}_{\omega, n}) \to_w (g(W), (g(J)-h(W))^*) \quad \text{in $D([0, T], \R^e \times \R^{e\times e})$ when $n\to \infty$.}
\]
\end{cor}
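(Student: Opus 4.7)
My plan is to derive the corollary from the two lemmas by reducing the law of $(\tilde M_{\omega,n},\tilde{\mathbb M}_{\omega,n})$ under $\hat\mu_\omega$ to the law of a continuous functional of the backward processes $(\tilde M^-_{\sigma^{nT}\omega,n},\tilde{\mathbb M}^-_{\sigma^{nT}\omega,n})$ under $\hat\mu_{\sigma^{nT}\omega}$, and then invoking Lemma~\ref{l143} (at the shifted base point) together with the continuous mapping theorem.

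First I would use the fact that $(\hat T_\omega^{(nT)})_*\hat\mu_\omega=\hat\mu_{\sigma^{nT}\omega}$ to rewrite the distribution: the law of $(\tilde M_{\omega,n},\tilde{\mathbb M}_{\omega,n})$ under $\hat\mu_\omega$ equals the law of $(\tilde M_{\omega,n},\tilde{\mathbb M}_{\omega,n})\circ\hat T_{\sigma^{nT}\omega}^{(-nT)}$ under $\hat\mu_{\sigma^{nT}\omega}$. By Lemma~\ref{l142} the latter process is within $F_{\omega,n}\circ\hat T_{\sigma^{nT}\omega}^{(-nT)}$ in the uniform norm on $[0,T]$ of
\[
\Phi_n := \bigl(g(\tilde M^-_{\sigma^{nT}\omega,n}),\ (g(\tilde{\mathbb M}^-_{\sigma^{nT}\omega,n})-h(\tilde M^-_{\sigma^{nT}\omega,n}))^*\bigr).
\]
Since $F_{\omega,n}\to 0$ $\hat\mu_\omega$-a.s., its pushforward to $\hat M_{\sigma^{nT}\omega}$ converges to $0$ in $\hat\mu_{\sigma^{nT}\omega}$-probability (because the pushforward preserves the measure of the superlevel sets $\{|F_{\omega,n}|>\epsilon\}$), so by a Slutsky-type argument it suffices to identify the weak limit of the law of $\Phi_n$ under $\hat\mu_{\sigma^{nT}\omega}$.

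The essential step is to verify that, for $\mathbb P$-a.e.\ $\omega$, the pair $(\tilde M^-_{\sigma^{nT}\omega,n},\tilde{\mathbb M}^-_{\sigma^{nT}\omega,n})$ converges weakly to $(W,I)$ under $\hat\mu_{\sigma^{nT}\omega}$. Lemma~\ref{l143} supplies the analogous statement at a fixed base point, so one must argue that its proof is robust under the diagonal substitution $\omega\mapsto\sigma^{nT}\omega$. The proof of Lemma~\ref{l143} (via Lemma~\ref{tl}) rests on Birkhoff's ergodic theorem applied to observables on $(\Omega\times\hat M,\hat\mu,\hat\tau^{-1})$, supplemented by a Lindeberg-type bound. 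When evaluated at the shifted base point and pulled back by $\hat T_\omega^{(nT)}$, each Birkhoff average takes the form of a block average $n^{-1}\sum_{k=n(T-1)}^{nT-1}\psi(\hat\tau^k(\omega,\mathbf x))$; writing this as $n^{-1}(S_{nT}-S_{n(T-1)})$ of the forward ergodic sums $S_k$ and applying Birkhoff twice (at times $nT$ and $n(T-1)$), the block average converges to $\int\psi\,d\hat\mu$ for $\hat\mu_\omega$-a.e.\ $\mathbf x$. An analogous telescoping argument handles the Lindeberg truncation, so all ingredients of Lemma~\ref{l143} survive, and the desired weak convergence of the backward pair holds along $\sigma^{nT}\omega$.

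To conclude, I would apply the continuous mapping theorem to the map $(u,v)\mapsto (g(u),(g(v)-h(u))^*)$, which is continuous at any path continuous at $T-t$ for every $t\in[0,T]$; since $(W,I)$ has continuous sample paths, the set of continuity points has full measure under the limit law. Therefore $\Phi_n$ converges weakly to $(g(W),(g(I)-h(W))^*)$ under $\hat\mu_{\sigma^{nT}\omega}$, and combined with the asymptotically negligible error from Lemma~\ref{l142} and the change-of-variable identification at the start, this yields the claimed convergence for $(\tilde M_{\omega,n},\tilde{\mathbb M}_{\omega,n})$ under $\hat\mu_\omega$. The main obstacle is the ergodic-theoretic verification that Lemma~\ref{l143} remains valid along the diagonal sequence $\sigma^{nT}\omega$; once this is dispatched, the remainder is a clean application of Slutsky's theorem and the continuous mapping theorem.
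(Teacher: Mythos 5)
Your argument is correct and is essentially the unwinding of the paper's one-line reference to the proof of \cite[Corollary 4.10]{KM}: change variables via the measure isomorphism $\hat T_\omega^{(nT)}$, apply Lemma~\ref{l142} for the negligible correction term (noting that a.s.\ convergence of $F_{\omega,n}$ to $0$ under $\hat\mu_\omega$ transfers to convergence in $\hat\mu_{\sigma^{nT}\omega}$-probability under the pushforward), invoke the backward WIP, and conclude via the continuous mapping theorem. You correctly flag a genuine subtlety that the paper glosses over in the random (non-stationary) setting: Lemma~\ref{l143} is stated at a fixed base point $\omega$, whereas the corollary requires convergence of $(\tilde M^-_{\sigma^{nT}\omega,n},\tilde{\mathbb M}^-_{\sigma^{nT}\omega,n})$ under $\hat\mu_{\sigma^{nT}\omega}$ along the diagonal $\omega\mapsto\sigma^{nT}\omega$. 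Your resolution --- pulling back by $\hat T_\omega^{(nT)}$ so that the relevant Birkhoff averages become block sums over $[n(T-1),nT)$, and obtaining their a.s.\ limit as the difference of two ordinary Birkhoff averages at times $nT$ and $n(T-1)$ --- is exactly the right fix, and it applies uniformly to the three ingredients of Lemma~\ref{tl} (the two quadratic-variation limits and the Lindeberg bound), so the martingale CLT of Brown and the Kurtz--Protter theorem still deliver the required weak limit $(W,I)$ at the shifted base point.
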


\begin{proof}
The desired conclusion follows from Lemmas~\ref{l143} and~\ref{l142} by arguing as in the proof of~\cite[Corollary 4.10]{KM}.
\end{proof}

The following is the main result of this subsection.
\begin{lemma}\label{411}
Suppose that $\omega \mapsto \|m_\omega\|_{L^\infty(\mu_\omega)}\in L^{4}(\Omega, \mathcal F, \mathbb P)$. 
For $\mathbb P$-a.e. $\omega \in \Omega$, we have that $(M_{\omega, n}, \mathbb{M}_{\omega, n})\to_w (W, J)$ in $D([0, \infty), \R^e \times \R^{e\times e})$.
\end{lemma}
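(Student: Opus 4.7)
The plan is to combine Corollary~\ref{corr} (which gives the joint weak convergence on the extended invertible system) with a distributional identification of the limit and a transfer back to the original system. Fix $T\in\N$. First I would note that the processes $(M_{\omega,n},\mathbb M_{\omega,n})$ on $(M,\mu_\omega)$ and $(\tilde M_{\omega,n},\tilde{\mathbb M}_{\omega,n})$ on $(\hat M_\omega,\hat\mu_\omega)$ have the same law: iterating the intertwining relation~\eqref{iT} yields $i_{\sigma^j\omega}\circ \hat T_\omega^{(j)}=T_\omega^{(j)}\circ i_\omega$ for every $j\geq 0$, so that $\tilde M_{\omega,n}=M_{\omega,n}\circ i_\omega$ and $\tilde{\mathbb M}_{\omega,n}=\mathbb M_{\omega,n}\circ i_\omega$, and by construction $(i_\omega)_*\hat\mu_\omega=\mu_\omega$. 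Consequently, any weak limit of the tilde processes is also a weak limit of the untilde processes.

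Next I would invoke Corollary~\ref{corr}, whose hypotheses follow directly from the assumed $L^4$ moment on $\omega\mapsto\|m_\omega\|_{L^\infty(\mu_\omega)}$ (it implies the $L^3$ moment needed for Lemma~\ref{l143} and is precisely the moment demanded by Lemma~\ref{l142}). This yields $(\tilde M_{\omega,n},\tilde{\mathbb M}_{\omega,n})\to_w (g(W),(g(I)-h(W))^*)$ in $D([0,T],\R^e\times\R^{e\times e})$ for $\mathbb P$-a.e.\ $\omega\in\Omega$, and combined with the first step gives the analogous weak convergence for $(M_{\omega,n},\mathbb M_{\omega,n})$ under $\mu_\omega$.

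The heart of the argument is then to identify the limit law. Set $\tilde W(t):=g(W)(t)=W(T)-W(T-t)$; by time reversibility of Brownian motion this is again a Brownian motion on $[0,T]$ with the same covariance matrix $\int mm^T\,d\mu$. Applying It\^o's product formula componentwise to $W^\beta(T)W^\gamma(T)-W^\beta(T-t)W^\gamma(T-t)$ and performing the change of variable $s\mapsto T-s$ in the resulting stochastic integrals, one verifies that $(g(I)-h(W))^{*}(t)$ coincides, up to the transposition, with $\int_0^t \tilde W^\beta\,d\tilde W^\gamma$. Since $\tilde W$ has the same law as $W$, it follows that $(g(W),(g(I)-h(W))^{*})\stackrel{d}{=}(W,I)$ on $[0,T]$. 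This is precisely why the auxiliary functions $g$ and $h$ were designed as in Lemma~\ref{l142}: the It\^o correction produced by $-h$ exactly cancels the quadratic covariation term that arises upon reversing time inside the iterated integral.

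Finally, since $T\in\N$ was arbitrary and weak convergence in $D([0,\infty),\R^e\times\R^{e\times e})$ is equivalent to weak convergence in $D([0,T],\R^e\times\R^{e\times e})$ for every $T$, the conclusion follows. The main obstacle is the identification in the third paragraph: verifying the equality $(g(W),(g(I)-h(W))^{*})\stackrel{d}{=}(W,I)$ requires careful bookkeeping of It\^o corrections in the matrix-valued, possibly correlated Brownian setting, and is the only place where the specific algebraic form of $g$ and $h$ enters the argument.
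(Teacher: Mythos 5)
Your argument is correct and follows essentially the same route as the paper: combine Corollary~\ref{corr} with the distributional identity $(g(W),(g(I)-h(W))^{*})\stackrel{d}{=}(W,I)$, then push down from the inverse limit and pass from $[0,T]$ to $[0,\infty)$. The paper's proof is a one-liner citing Corollary~\ref{corr} together with~\cite[Lemma~4.11]{KM}; that cited lemma is precisely the time-reversal identity you sketch in your third paragraph, so what you have done differently is to re-derive it by hand (via $\tilde W(t)=W(T)-W(T-t)$ and It\^o's product formula) rather than quote it, and to spell out the transfer $(\tilde M_{\omega,n},\tilde{\mathbb M}_{\omega,n})\stackrel{d}{=}(M_{\omega,n},\mathbb M_{\omega,n})$ under $(i_\omega)_*\hat\mu_\omega=\mu_\omega$, which the paper leaves implicit. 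Both of these additions are correct as stated (the It\^o bookkeeping is the content of \cite[Lemma~4.11]{KM}, and it is a pathwise identity, after which $\tilde W\stackrel{d}{=}W$ gives the equality of joint laws), so the only thing missing relative to a complete proof is the detailed verification you yourself flag in the final sentence, which the paper outsources to the reference.
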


\begin{proof}
The desired conclusion follows from Corollary~\ref{corr} and~\cite[Lemma 4.11]{KM}.
\end{proof}

\subsection{Iterated weak invariance principle via martingale reduction}
We are now in a position to establish the iterated weak invariance principle for observables admitting an appropriate martingale decomposition. More precisely, we have the following result.

\begin{theorem}\label{iteratedprinciple}
Let $v\colon \Omega \times M\to \R^e$ be a measurable map such that~\eqref{spaceH} and~\eqref{centering} hold for $\mathbb P$-a.e. $\omega \in \Omega$. We suppose that $v$ admits a decomposition~\eqref{mdec}, 
for $\mathbb P$-a.e. $\omega \in \Omega$. Moreover, we require that~\eqref{lm} holds for  $\mathbb P$-a.e. $\omega \in \Omega$. Finally, we assume that there are $p>0$, $r\geq \frac{2q_0}{q_0-2}$ and $s\geq 4$ satisfying~\eqref{wer} and such that $K\in L^r(\Omega, \mathcal F, \mathbb P)$ and $\omega \mapsto \|v_\omega\|_{\mathcal H}\in L^p(\Omega, \mathcal F, \mathbb P)$, where $v_\omega:=v(\omega, \cdot)$. 

Then,
for $\mathbb P$-a.e. $\omega \in \Omega$, we have that $(W_{\omega, n}, \mathbb W_{\omega, n})\to_w (W, \mathbb W)$ in $D([0, \infty), \R^e \times \R^{e\times e})$, where:
\begin{enumerate}
\item[(i)] $W$ is an $e$-dimensional Brownian motion with covariance matrix $\Sigma=\Cov (W(1))=\lim_{n\to \infty} \Cov_{\mu} (W_n (1))$ given by
\begin{equation}\label{sig}
\Sigma^{\beta \gamma}=\int_{\Omega \times M}v^\beta v^\gamma\, d\mu+\sum_{n=1}^\infty \int_{\Omega \times M}(v^\beta v^\gamma \circ \tau^n+v^\gamma v^\beta \circ \tau^n)\, d\mu,
\end{equation}
where
\[
W_n(t)=\frac{1}{\sqrt n}\sum_{j=0}^{[nt]-1}v\circ \tau^j.
\]
\item [(ii)]
$\mathbb W^{\beta \gamma}(t)=\int_0^t W^\beta \, d W^\gamma +E^{\beta \gamma}t$, where $E=\lim_{n\to \infty}\mathbb E_{\mu}(\mathbb W_{n}(1))$ is given by 
\begin{equation}\label{ee}
E^{\beta \gamma}=\sum_{n=1}^\infty \int_{\Omega \times M} v^\beta v^\gamma \circ \tau^n \, d\mu,
\end{equation}
and
\[
\mathbb W^{\beta \gamma}_n(t)=\frac{1}{n}\sum_{0\le i<j\le [nt]-1}v^\beta \circ \tau^i \cdot v^\gamma \circ \tau^j.
\]
\end{enumerate}
\end{theorem}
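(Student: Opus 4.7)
The plan is to combine the martingale weak invariance principle of Lemma~\ref{411}, applied to $m$, with the reduction from $v$ to $m$ provided by Corollary~\ref{co}, and then identify the limiting covariance and drift by direct algebraic computation.

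First I would verify that the integrability hypotheses needed downstream are in force. The relation $\frac{1}{s} = \frac{1}{r} + \frac{1}{p} + \frac{1}{q_0}$ combined with Lemma~\ref{m lemma} ensures that $\omega \mapsto \|m_\omega\|_{L^\infty(\mu_\omega)}$ lies in $L^s(\Omega, \mathcal F, \mathbb P)$; since $s \geq 4$, Lemma~\ref{411} applies to $m$ and yields, for $\mathbb P$-a.e.\ $\omega$, that $(M_{\omega, n}, \mathbb M_{\omega, n}) \to_w (\hat W, \hat I)$ in $D([0, \infty), \R^e \times \R^{e \times e})$, where $\hat W$ is an $e$-dimensional Brownian motion of covariance $\int_{\Omega \times M} m m^T\, d\mu$ and $\hat I^{\beta \gamma}(t) = \int_0^t \hat W^\beta \, d\hat W^\gamma$. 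Corollary~\ref{co} then transfers this convergence to $(W_{\omega, n}, \mathbb W_{\omega, n}) \to_w (W, \mathbb W)$ with $W = \hat W$ and
\[
\mathbb W^{\beta \gamma}(t) = \hat I^{\beta \gamma}(t) + t \sum_{j = 1}^\infty \int_{\Omega \times M}\bigl(v^\beta v^\gamma \circ \tau^j - m^\beta m^\gamma \circ \tau^j\bigr)\, d\mu.
\]

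Next I would match this expression with \eqref{sig} and \eqref{ee}. The key algebraic observation is that the martingale identity $L_\omega m_\omega = 0$ from \eqref{lm} forces $\int_{\Omega \times M} m^\beta m^\gamma \circ \tau^j\, d\mu = 0$ for every $j \geq 1$: by duality \eqref{Duality},
\[
\int_{\Omega \times M} m^\beta \cdot (m^\gamma \circ \tau^j)\, d\mu = \int_\Omega \int_M L_\omega^{(j)}(m_\omega^\beta) \cdot m_{\sigma^j \omega}^\gamma\, d\mu_{\sigma^j \omega}\, d\mathbb P(\omega) = 0,
\]
since iterating \eqref{lm} yields $L_\omega^{(j)}(m_\omega^\beta) = 0$. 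This collapses the drift in the displayed formula above to $E^{\beta \gamma}\, t$ with $E^{\beta \gamma} = \sum_{j \geq 1} \int v^\beta v^\gamma \circ \tau^j\, d\mu$, matching \eqref{ee}. For the covariance, martingale orthogonality together with Birkhoff's theorem applied fiberwise gives $\frac{1}{n}\Cov_{\mu_\omega}(S_n^\omega m) \to \int_{\Omega \times M} m m^T\, d\mu$; writing $v = m + \chi \circ \tau - \chi$ and using that $\chi \in L^s(\mu) \subset L^2(\mu)$ is an $L^2$-coboundary that contributes no asymptotic variance, the same limit equals the asymptotic covariance of $S_n^\omega v / \sqrt n$, whose annealed Green--Kubo expansion is exactly \eqref{sig}. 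Hence $W$ has covariance $\Sigma$.

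The main subtlety is bookkeeping of integrability constants: one needs to track how $p, r, s, q_0$ (satisfying \eqref{wer} and $s \geq 4$) flow through Lemmas~\ref{chi bound}, \ref{m lemma}, \ref{411} and Corollary~\ref{co} so that all invocations are legitimate. Once this is checked, the rest is a direct assembly of the quoted results together with the single cancellation $\int m^\beta m^\gamma \circ \tau^j\, d\mu = 0$ for $j \geq 1$.
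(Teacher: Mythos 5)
Your proposal is correct and follows essentially the same route as the paper: apply Lemma~\ref{411} to the martingale part $m$ (legitimately, since \eqref{wer} with $s\ge 4$ and Lemma~\ref{m lemma} give $\omega\mapsto\|m_\omega\|_{L^\infty(\mu_\omega)}\in L^s\subseteq L^4$), transfer to $v$ via Corollary~\ref{co}, use the cancellation $\int_{\Omega\times M} m^\beta\, (m^\gamma\circ\tau^j)\,d\mu=0$ coming from $L_\omega m_\omega=0$ to reduce the drift to \eqref{ee}, and identify the covariance with the Green--Kubo expression \eqref{sig} through the $L^2$-coboundary $v-m=\chi\circ\tau-\chi$. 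This is precisely the assembly the authors cite (Lemma~\ref{411}, Corollary~\ref{co}, and the reasoning of Theorem~4.3 in~\cite{KM}).
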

\begin{proof}
The proof follows from Lemma~\ref{411} together with Corollary~\ref{co}, using the same reasoning as in the proof of~\cite[Theorem 4.3]{KM}.
\end{proof}

\section{Examples}

\subsection{Uniform decay of correlations}\label{UDC}
We discuss the case when $K$ in \eqref{NewExpConv} is  a constant variables and $A_n$ in \eqref{NewExpConv} has the form $A_n=\rho^n$ for some constant $\rho<1$. In this case ~\eqref{rho norm} holds for each $q_0\ge 1$.
For explicit examples we refer to~\cite[Section 2.3.1]{DFGTV2} (see also~\cite{DFGTV1, DH}), which include random Lasota-Yorke maps as well as random piecewise expanding maps in higher dimension. The associated space $\mathcal H$ is the space of functions of bounded variation (see~\cite[Section 2.2]{DFGTV2}). We emphasize  that since the arguments in the present paper do not rely on the multiplicative ergodic theory, it is not necessary to assume that the map $\omega \mapsto T_\omega$ is countably valued as in~\cite{DFGTV2}. We stress that for these examples, the crucial assumption is that $\essinf_\om \gamma_\om>1$, where $\gamma_\om$ is
the 
minimal amount of local expansion of $T_\omega$ (see~\cite[Eq. (20)]{DFGTV2}).

\subsubsection{Application to homogenization}

For given
$a\colon \R^d \to \R^d$, $b\colon \R^d \to \R^{d\times e}$, $v\colon \Omega \times M  \to \mathbb R^e$, $\omega \in \Omega$ and $\epsilon >0$, we consider the following slow-fast system:
\begin{equation}\label{sfs}
\begin{split}
x_{n+1}&=x_n+\epsilon^2 a(x_n)+\epsilon b(x_n)v_{\sigma^n \omega}(y_n) \\
y_{n+1}&=T_{\sigma^n \omega}(y_n).
\end{split}
\end{equation}
We note that the above system is possed on $\R^d \times M$. We refer to the first equation in~\eqref{sfs} as to the slow component (since we will be interested in the case when $\epsilon$ is close to $0$), while the second equation in~\eqref{sfs} will be called the fast component.

 We observe that the solution of~\eqref{sfs} with an initial condition $x(0)=\xi$ is given by 
\[
x_n^\epsilon=\xi+ \epsilon^2 \sum_{j=0}^{n-1}a(x_j^\epsilon)+\epsilon \sum_{j=0}^{n-1}b(x_j^\epsilon)v_{\sigma^j \omega}(y_j), \quad y_n=T_\omega^{(n)}y_0.
\]
Set $\hat{x}^\epsilon (t)=x_{[t/\epsilon^2]}^\epsilon$.
\begin{remark}
We observe that $\hat{x}^\epsilon (t)$ depends also on $\omega$ but in order to keep the notation as simple as possible, we don't make this dependence explicit. 
\end{remark}
We will assume that $v$ satisfies~\eqref{spaceH}, \eqref{centering} and  \begin{equation}\label{tuu}\esssup_{\om \in \Omega}\|v_\om\|_{\mathcal H}<+\infty.\end{equation} Since~\eqref{rho norm} holds with $q_0=\infty$ and $K\in L^\infty$, we have that \begin{equation}
\label{mchi}\esssup_{\omega \in \Omega}\|m_\omega\|_{L^\infty(\mu_\om)}<+\infty \quad  \text{and} \quad \esssup_{\omega \in \Omega}\|\chi_\omega\|_{L^\infty(\mu_\om)}<+\infty.\end{equation}

We have the following homogenization result. 
\begin{theorem}\label{homogenization}
Suppose that $a\colon \R^d \to \R^d$ is $C^2$ and that $b\colon \R^d \to \R^{d\times e}$ is $C^3$. For $\mathbb P$-a.e. $\omega \in \Omega$, we have that $\hat{x}^\epsilon \to_w Z$ as $\epsilon \to 0$, where $Z$ is a solution of the stochastic differential equation
\[
dZ=\tilde a(Z)dt+b(Z)dW, 
\]
where $W$ is a $e$-dimensional Brownian motion with covariance matrix $\Sigma$ and
\[
\tilde a (x)=a(x)+\sum_{\alpha=1}^d\sum_{\beta, \gamma=1}^eE^{\beta \gamma}\frac{\partial b^\beta}{\partial x_\alpha}b^{\alpha \gamma}(x),
\]
and $\Sigma$ and $E$ are given by~\eqref{sig} and~\eqref{ee} respectively.
\end{theorem}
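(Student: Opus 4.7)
The plan is to realize \eqref{sfs} as a discrete rough differential equation driven by the pair $(W_{\omega,n},\mathbb{W}_{\omega,n})$ with $n=[1/\epsilon^2]$, and then derive the conclusion from the continuity of the It\^o--Lyons solution map. With the rescaling $t\mapsto[t/\epsilon^2]$, the integrated slow equation reads
\[
\hat x^\epsilon(t)=\xi+\epsilon^2\sum_{j=0}^{[t/\epsilon^2]-1}a(x_j^\epsilon)+\sum_{j=0}^{[t/\epsilon^2]-1}b(x_j^\epsilon)\bigl(W_{\omega,n}(t_{j+1})-W_{\omega,n}(t_j)\bigr),
\]
where $t_j=j/n$, and a Taylor expansion of $b$ naturally produces the quadratic cross-terms encoded in $\mathbb{W}_{\omega,n}$. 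Hence the theorem will follow as soon as we show that, for $\mathbb{P}$-a.e.\ $\omega$, the pair $(W_{\omega,n},\mathbb{W}_{\omega,n})$ converges to $(W,\mathbb{W})$ in the $\alpha$-H\"older rough path topology for some $\alpha\in(1/3,1/2)$.

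First, Theorem~\ref{iteratedprinciple} applies in this setting with essentially no restrictions: since $A_n=\rho^n$ and $K$ is a constant, \eqref{rho norm} holds for any $q_0$, $K\in L^r$ for any $r$, and by \eqref{tuu} the map $\omega\mapsto\|v_\omega\|_{\mathcal H}\in L^p$ for any $p$, so~\eqref{wer} is satisfied with any $s\geq 4$. This yields finite-dimensional weak convergence $(W_{\omega,n},\mathbb{W}_{\omega,n})\to_w(W,\mathbb{W})$ with $\Sigma$ and $E$ given by \eqref{sig} and \eqref{ee}. To upgrade this to H\"older rough path convergence, I will use \eqref{mchi}: the uniform $L^\infty$ bounds $\esssup_\omega\|m_\omega\|_{L^\infty(\mu_\omega)}<\infty$ and $\esssup_\omega\|\chi_\omega\|_{L^\infty(\mu_\omega)}<\infty$. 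Combined with Burkholder's inequality applied to the martingale sum associated with $m$ (via Lemma~\ref{MD}), and using the coboundary decomposition $v=m+\chi-\chi\circ\tau$, one obtains for every $p\geq 2$ a constant $C=C(p)$ such that
\[
\|W_{\omega,n}(t)-W_{\omega,n}(s)\|_{L^p(\mu_\omega)}\leq C|t-s|^{1/2},\qquad \|\mathbb{W}_{\omega,n}(s,t)\|_{L^{p/2}(\mu_\omega)}\leq C|t-s|,
\]
uniformly in $\omega$, $n$, and $0\leq s<t$ (where $\mathbb{W}_{\omega,n}(s,t)$ denotes the natural increment). The second bound mirrors the double-sum martingale estimate obtained by iterating Burkholder on the leading term $I_{\omega,n}$ in the decomposition used in the proof of Theorem~\ref{T1}. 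Kolmogorov's tightness criterion for rough paths then promotes the finite-dimensional convergence to convergence in the $\alpha$-H\"older rough path topology for $\mathbb{P}$-a.e.\ $\omega$.

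Finally, since $a\in C^2$ and $b\in C^3$, the It\^o--Lyons solution map is continuous in the $\alpha$-H\"older rough path topology (for $\alpha\in(1/3,1/2)$). Applying this continuity, $\hat x^\epsilon$ converges weakly to the solution of the rough differential equation $dZ=a(Z)\,dt+b(Z)\,d\mathbf{X}$ driven by the rough path $\mathbf{X}=(W,\mathbb{W})$. Rewriting $\mathbb{W}^{\beta\gamma}(t)=\int_0^t W^\beta\,dW^\gamma+E^{\beta\gamma}t$ as an It\^o integral plus the bracket correction, and computing the change from the rough integral to the It\^o integral produces precisely the drift correction $\tilde a$ in the statement. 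The main obstacle is the uniform-in-$\omega$ rough path tightness of the second step: the whole argument hinges on \eqref{mchi}, which is available here only because of the uniform decay assumption. As explicitly noted in Remark~\ref{failure}, it is the failure of such $\omega$-uniform estimates that prevents extending this homogenization result to the nonuniformly expanding setting.
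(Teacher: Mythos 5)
Your proposal is correct and follows essentially the same route as the paper's (very terse) proof: Theorem~\ref{iteratedprinciple} gives the quenched iterated WIP with the stated $\Sigma$ and $E$, the uniform bounds~\eqref{mchi} plus Burkholder yield the moment/tightness estimates (this is exactly Proposition~\ref{Mom prop 1}, Proposition~\ref{mel2} and Corollary~\ref{tzzz}), and the rough-path homogenization machinery then converts these two ingredients into the SDE limit with the drift correction $\tilde a$. The only difference is presentational: the paper concludes by citing \cite[Theorem 4.10]{CFKMZ0} as a black box, whereas you sketch the content of that theorem (discrete RDE formulation, Kolmogorov tightness in H\"older rough-path topology, continuity of the It\^o--Lyons map, and the It\^o/bracket correction producing $\tilde a$), which is a fair unpacking rather than a different argument.
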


To prove the above theorem we first need the following two types of moment estimates. In the sequel, $C$ will denote a generic positive constant independent on $\omega$ that can change its value from one occurrence to the next.
\begin{proposition}\label{Mom prop 1}
Let $p> 2$. For $\mathbb P$-a.e. $\om \in \Omega$,
\[
\left\|\max_{k\leq n}|S_k^\om v|\right\|_{L^p(\mu_\om)} \le Cn^{1/2},
\]
where $S_n^\om v=\sum_{j=0}^{n-1}v_{\sigma^j\om}\circ T_\om^{(j)}.$
\end{proposition}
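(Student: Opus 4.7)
The plan is to reduce to the martingale component via the decomposition from Section 4 and then apply Doob's maximal inequality together with Burkholder's inequality. First, summing the identity $v_\omega = m_\omega - \chi_\omega + \chi_{\sigma\omega}\circ T_\omega$ (which is just \eqref{mdec} rearranged) telescopes to
\[
S_n^\omega v = S_n^\omega m + \chi_{\sigma^n\omega}\circ T_\omega^{(n)} - \chi_\omega.
\]
Since we are in the uniform decay of correlations setting, \eqref{mchi} gives an $\omega$-independent bound on $\|\chi_\omega\|_{L^\infty(\mu_\omega)}$, so the coboundary term contributes at most a constant uniformly in $n, k, \omega$. It therefore suffices to establish the bound $\|\max_{k\leq n}|S_k^\omega m|\|_{L^p(\mu_\omega)}\leq Cn^{1/2}$.

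For the martingale piece, Lemma~\ref{MD} and the remark following it say that the sequence $d_j := m_{\sigma^j \omega}\circ T_\omega^{(j)}$, $j = 0, \ldots, n-1$, is a reverse martingale difference with respect to the decreasing filtration $(\mathcal T_\omega^j)_j$. By time-reversal (set $e_k := d_{n-1-k}$ and $\mathcal G_k := \mathcal T_\omega^{n-1-k}$, which is now an increasing filtration), the $(e_k)$ form a standard forward martingale difference sequence on $(M, \mu_\omega)$ with partial sums $\tilde S_K := \sum_{k=0}^{K-1}e_k = S_n^\omega m - S_{n-K}^\omega m$. From $|S_k^\omega m|\leq |\tilde S_n| + |\tilde S_{n-k}|$ we conclude $\max_{k\leq n}|S_k^\omega m|\leq 2\max_{K\leq n}|\tilde S_K|$, and Doob's and Burkholder's inequalities for the forward martingale $(\tilde S_K)$ give
\[
\left\|\max_{k\leq n}|S_k^\omega m|\right\|_{L^p(\mu_\omega)} \leq C_p\|S_n^\omega m\|_{L^p(\mu_\omega)} \leq C_p'\left\|\sum_{j=0}^{n-1}d_j^2\right\|_{L^{p/2}(\mu_\omega)}^{1/2}.
\]

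Finally, since $p\geq 2$, Minkowski's inequality in $L^{p/2}$ bounds the square function by
\[
\left\|\sum_{j=0}^{n-1}d_j^2\right\|_{L^{p/2}(\mu_\omega)} \leq \sum_{j=0}^{n-1}\|d_j^2\|_{L^{p/2}(\mu_\omega)} \leq \sum_{j=0}^{n-1}\|m_{\sigma^j\omega}\|_{L^\infty(\mu_{\sigma^j\omega})}^2 \leq n\bigl(\esssup_{\omega\in\Omega}\|m_\omega\|_{L^\infty(\mu_\omega)}\bigr)^2,
\]
which is $O(n)$ thanks to \eqref{mchi}. Taking the square root yields the desired $\sqrt{n}$ bound. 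The only potential pitfall is the correct time-reversal of the filtration for the reverse martingale; once that bookkeeping is done, the argument is entirely standard, and the uniformity of the constant in $\omega$ is automatic since both $\|m_\omega\|_{L^\infty(\mu_\omega)}$ and $\|\chi_\omega\|_{L^\infty(\mu_\omega)}$ are uniformly bounded in the present setting.
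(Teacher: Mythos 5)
Your proof is correct and follows essentially the same route as the paper's: reduce to the martingale part $S_n^\omega m$ via the coboundary identity and the uniform bound \eqref{mchi}, then apply Doob and Burkholder to bound the maximum by the square function, and finally Minkowski plus the uniform $L^\infty$ bound on $m_\omega$ to get $O(n)$. The only difference is that you spell out the filtration time-reversal needed to pass from the reverse martingale to a forward one before invoking Doob/Burkholder, a bookkeeping step the paper leaves implicit.
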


\begin{proof}
It follows from~\eqref{mdec} and~\eqref{mchi} that it is sufficient to prove that
\begin{equation}\label{zu8}
\left\|\max_{k\leq n}|S_k^\om m|\right\|_{L^p(\mu_\om)}\le Cn^{1/2},
\end{equation}
for $\mathbb P$-a.e. $\om \in \Omega$.
Next, by Burkholder's inequality we have 
$$
\left\|\max_{k\leq n}|S_n^\om m|\right\|_{L^{p}(\mu_\om)}\leq\|S_{n}^\om (m^2)\|_{L^{p/2}(\mu_\om)}^{1/2}. 
$$
Moreover, 
\[
\|S_{n}^\om (m^2)\|_{L^{p/2}(\mu_\om)}\le \sum_{i=0}^{n-1} \|m_{\sigma^i \om}\|_{L^p(\mu_{\sigma^i \om})}^2 \le 
\sum_{i=0}^{n-1} \|m_{\sigma^i \om}\|_{L^\infty(\mu_{\sigma^i \om})}^2
\le Cn,
\]
where in the last step we used~\eqref{mchi}. The last two estimates readily imply~\eqref{zu8}.
\end{proof}

\begin{proposition}\label{mel2}
 For $\mathbb P$-a.e. $\omega$ 
and every $q>2$ we have that 
\[
\bigg{\lVert} \max_{k\le n}\lvert \mathbb S_{\omega, k}^{\beta \gamma}\rvert \bigg{\rVert}_{L^{q/2}(\mu_\omega)}\le C n,
\]
where
\[
\mathbb S_{\omega, n}^{\beta \gamma}=\sum_{0\le i <j<n} v_{\sigma^i \omega}^\beta \circ T_\omega^{(i)} \cdot v_{\sigma^j \omega}^\gamma \circ  T_\omega^{(j)},
\]
for $\omega \in \Omega$, $n\in \N$ and $\beta, \gamma \in \{1, \ldots, e\}$.
\end{proposition}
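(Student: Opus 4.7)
The plan is to closely follow the route of the commented-out argument (modelled on the deterministic treatment of \cite{KM}): reduce $S_{\omega,n}^{\beta\gamma}$ via the martingale decomposition \eqref{mdec} to a principal ``double martingale'' sum plus lower-order coboundary remainders, and then control the principal sum by a reverse-martingale Burkholder inequality after an appropriate reindexing.

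Write $V_j^\alpha:=v^\alpha_{\sigma^j\omega}\circ T_\omega^{(j)}$, $M_j^\alpha:=m^\alpha_{\sigma^j\omega}\circ T_\omega^{(j)}$, $X_j^\alpha:=\chi^\alpha_{\sigma^j\omega}\circ T_\omega^{(j)}$, so that by \eqref{mdec},
\[
V_j^\alpha = M_j^\alpha + X_{j+1}^\alpha - X_j^\alpha.
\]
Substituting into $S_{\omega,n}^{\beta\gamma}=\sum_{0\le i<j<n}V_i^\beta V_j^\gamma$ and expanding gives $S_{\omega,n}^{\beta\gamma}=I_{\omega,n}+J_{\omega,n}$, where $I_{\omega,n}:=\sum_{0\le i<j<n}M_i^\beta M_j^\gamma$ and $J_{\omega,n}$ collects the three sums containing at least one coboundary factor $X_{k+1}-X_k$. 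For $J_{\omega,n}$, Abel summation collapses the telescoping inner sums (e.g.\ $\sum_{i<j}(X_{i+1}^\beta-X_i^\beta)=X_j^\beta-X_0^\beta$ and its dual in $j$), leaving at worst a product of a uniformly bounded $X_j^\alpha$ (by \eqref{mchi}) with a maximal partial sum of $V$ or $M$. Proposition~\ref{Mom prop 1} bounds such maximal sums by $Cn^{1/2}$ in $L^q(\mu_\omega)$, while the purely bounded leftovers contribute a trivial $Cn$, yielding
\[
\Big\|\max_{k\le n}|J_{\omega,k}|\Big\|_{L^{q/2}(\mu_\omega)}\le Cn.
\]

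The core of the argument is the bound on $I_{\omega,n}$. Reindex $k=n-\ell$ and set
\[
Y_{\omega,k,n}:=M_{n-k}^\gamma\,T_{n-k}^{\beta,m},\qquad T_\ell^{\beta,m}:=\sum_{i=0}^{\ell-1}M_i^\beta,\qquad I_{\omega,n}=\sum_{k=1}^{n-1}Y_{\omega,k,n}.
\]
A direct transfer-operator computation gives $L_\omega^{(\ell+1)}(M_i^\beta M_\ell^\gamma)=0$ for every $i<\ell$: pull the factor at time $\ell$ out via $L_\omega^{(\ell)}(H\circ T_\omega^{(\ell)}\cdot g)=H\cdot L_\omega^{(\ell)}g$, so that the inner transfer iterate reduces to $L_{\sigma^i\omega}m^\beta_{\sigma^i\omega}$, which vanishes by \eqref{lm}. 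Summing over $i<\ell=n-k$ and recalling that $\mathbb E_\omega[\,\cdot\mid\mathcal T_\omega^{\ell+1}]=L_\omega^{(\ell+1)}(\cdot)\circ T_\omega^{(\ell+1)}$, this yields $\mathbb E_\omega[Y_{\omega,k,n}\mid\mathcal T_\omega^{n-k+1}]=0$. After the $k\mapsto n-k$ time reversal, $(Y_{\omega,k,n})_{k=1}^{n-1}$ is therefore a reverse martingale difference sequence in the sense of \cite{KM}, and the classical Doob maximal inequality together with the Burkholder square-function inequality give
\[
\Big\|\max_{K\le n-1}\Big|\sum_{k=1}^{K}Y_{\omega,k,n}\Big|\Big\|_{L^{q/2}(\mu_\omega)}^{2}\le C_q\Big\|\sum_{k=1}^{n-1}Y_{\omega,k,n}^{2}\Big\|_{L^{q/4}(\mu_\omega)}.
\]
Bounding the square function termwise via H\"older, \eqref{mchi}, and the Burkholder estimate $\|T_{\ell}^{\beta,m}\|_{L^{q/2}(\mu_\omega)}\le C\sqrt{\ell}$ for the partial sum of uniformly bounded reverse martingale differences $(M_i^\beta)$, one gets
\[
\Big\|\sum_{k=1}^{n-1}Y_{\omega,k,n}^2\Big\|_{L^{q/4}(\mu_\omega)}\le\sum_{k=1}^{n-1}\|M_{n-k}^\gamma\|_{L^\infty}^{2}\|T_{n-k}^{\beta,m}\|_{L^{q/2}(\mu_\omega)}^{2}\le C\sum_{k=1}^{n-1}(n-k)=O(n^2),
\]
so $\|\max_{k\le n}|I_{\omega,k}|\|_{L^{q/2}(\mu_\omega)}\le Cn$. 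Combining with the bound on $J_{\omega,n}$ yields the claim.

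The main obstacle is verifying the reverse-martingale-difference property for $(Y_{\omega,k,n})$ cleanly: the partial sum $T_\ell^{\beta,m}$ is not $\mathcal T_\omega^\ell$-measurable, so the Burkholder/Doob machinery has to be invoked only after the $k\mapsto n-k$ relabelling, and the vanishing conditional expectation must be obtained from the transfer-operator identity above rather than by naively pulling $T_\ell^{\beta,m}$ out of a conditional expectation. Once this point is pinned down, everything else is a routine application of uniform $L^\infty$ bounds, Abel summation, and Proposition~\ref{Mom prop 1}.
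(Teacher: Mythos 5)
Your proof follows precisely the paper's route: the identical split $S_{\omega,n}^{\beta\gamma}=I_{\omega,n}+J_{\omega,n}$ coming from \eqref{mdec}, the same bound on $\max_{k\le n}|J_{\omega,k}|$ via Abel summation, the uniform bound \eqref{mchi} and Proposition~\ref{Mom prop 1}, and the same reindexing $I_{\omega,n}=\sum_{k=1}^{n-1}Y_{\omega,k,n}$ followed by Doob, Burkholder, H\"older, and a termwise $L^\infty$-plus-Proposition~\ref{Mom prop 1} estimate of the square function. Your transfer-operator computation of $L_\omega^{(\ell+1)}(M_i^\beta M_\ell^\gamma)=0$ for $i<\ell$ is correct and is the mechanism behind the vanishing conditional expectations.

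The one subtlety you flag at the end is a genuine one, and it is shared by the paper's proof as written: $Y_{\omega,k,n}=M_{n-k}^\gamma\sum_{j<n-k}M_j^\beta$ does satisfy $\mathbb E_\omega[Y_{\omega,k,n}\mid\mathcal T_\omega^{\,n-k+1}]=0$, but the factor $\sum_{j<n-k}M_j^\beta$ is \emph{not} $\mathcal T_\omega^{\,n-k}$-measurable, so $(Y_{\omega,k,n})_k$ is not adapted to $(\mathcal T_\omega^{\,n-k})_k$ and hence is not a reverse martingale difference sequence in the literal sense; merely knowing the orthogonality to $\mathcal T_\omega^{\,n-k+1}$ is not, by itself, enough to invoke Doob and Burkholder. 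The cleanest repair is to group by the \emph{smaller} index: set $W_i:=M_i^\beta\sum_{\ell=i+1}^{n-1}M_\ell^\gamma$. Then each $W_i$ is genuinely $\mathcal T_\omega^{\,i}$-measurable and $\mathbb E_\omega[W_i\mid\mathcal T_\omega^{\,i+1}]=0$, so $(W_i)_{i=0}^{n-2}$ is an honest reverse MDS with $I_{\omega,n}=\sum_{i=0}^{n-2}W_i$; Doob and Burkholder apply to its tail sums $\sum_{i\ge K}W_i$, and the discrepancy $I_{\omega,K}-\sum_{i<K-1}W_i=-\big(T_n^{\gamma,m}-T_K^{\gamma,m}\big)T_{K-1}^{\beta,m}$ is a product of two partial sums whose maxima are $O(n^{1/2})$ in $L^q(\mu_\omega)$ by Proposition~\ref{Mom prop 1}, contributing another $O(n)$ to $\lVert\max_{K\le n}|I_{\omega,K}|\rVert_{L^{q/2}(\mu_\omega)}$. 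With this adjustment the argument you wrote goes through verbatim and delivers the stated $O(n)$ bound.
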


\begin{proof}
Using~\eqref{mdec} we see that 
\[
\begin{split}
\mathbb S_{\omega, n}^{\beta \gamma}&=\sum_{0\le i <j<n} m_{\sigma^i \omega}^\beta \circ T_\omega^{(i)} \cdot v_{\sigma^j \omega}^\gamma \circ  T_\omega^{(j)}+\sum_{1\le j<n}(\chi_{\sigma^j \omega}^\beta \circ T_\omega^{(j)}-\chi_\omega^\beta)v_{\sigma^j \omega}^\gamma \circ  T_\omega^{(j)} \\
&=I_{\omega, n}+J_{\omega, n},
\end{split}
\]
where
\[
I_{\omega, n}:=\sum_{0\le i <j<n} m_{\sigma^i \omega}^\beta \circ T_\omega^{(i)} \cdot m_{\sigma^j \omega}^\gamma \circ  T_\omega^{(j)}
\]
and 
\[
J_{\omega, n}:=\sum_{0\le i <n-1}m_{\sigma^i \omega}^\beta \circ T_\omega^{(i)} (\chi_{\sigma^n \omega}^\gamma \circ T_\omega^{(n)}-\chi_{\sigma^{i+1}\omega}^\gamma \circ T_\omega^{(i+1)})+\sum_{1\le j<n}(\chi_{\sigma^j \omega}^\beta \circ T_\omega^{(j)}-\chi_\omega^\beta)v_{\sigma^j \omega}^\gamma \circ  T_\omega^{(j)}.
\]
Observe that 
\begin{equation}\label{523}
\max_{k\le n}\lvert J_{\omega, k}|\leq
I_{1}(\om,n)+I_{2}(\om,n)+I_3(\om,n)+I_4(\om,n)
\end{equation}
where 
$$
I_1(\om,n)=
\max_{k\leq n}\left(|\lvert \chi_{\sigma^k \omega}^\gamma \rvert|_{L^\infty(\mu_{\sigma^k\om})}\left|\sum_{i=0}^{k-1} ( m_{\sigma^i \omega}^\beta  \circ T_\omega^{(i)})\right|\right),
$$
$$
I_2(\om,n)=
\sum_{0\le i <n-1} |\lvert m_{\sigma^i \omega}^\beta \rvert|_{L^\infty(\mu_{\sigma^i\om})}  \cdot   |\lvert \chi_{\sigma^{i+1}\omega}^\gamma \rvert|_{L^\infty(\mu_{\sigma^{i+1}\om})},
$$
$$
I_{3}(\om,n)=\sum_{1\le j<n}|\lvert \chi_{\sigma^j \omega}^\beta \rvert|_{L^\infty(\mu_{\sigma^j\om})}  \cdot |\lvert v_{\sigma^j \omega}^\gamma \rvert|_{L^\infty(\mu_{\sigma^j\om})},
$$
and
$$
I_4(\om,n)= |\lvert \chi_\omega^\beta \rvert|_{L^\infty(\mu_{\om})} \sum_{1\le j<n}|\lvert v_{\sigma^j \omega}^\gamma \rvert|_{L^\infty(\mu_{\sigma^j\om})}.
$$
Taking into account \eqref{mchi} it is clear that 
\begin{equation}\label{e1}
\bigg{\lVert} \max_{k\le n}\lvert J_{\omega, k}\rvert \bigg{\rVert}_{L^{q/2}(\mu_\omega)} \le C n, \quad \text{for $\mathbb P$-a.e. $\omega \in \Omega$.}
\end{equation}
In order to estimate the contribution of $I_{\omega, n}$ we notice that 
$$
I_{\omega,n}=\sum_{k=1}^{n-1}Y_{\omega,k,n},
$$
where 
$$
Y_{\omega,k, n}=m_{\sigma^{n-k}\omega}^\gamma \circ T_\om^{(n-k)}\sum_{j=0}^{n-k-1}m_{\sigma^j\omega}^\beta \circ T_\omega^{(j)}.
$$
Notice that for $\mathbb P$-a.e. $\omega \in \Omega$ and every fixed $n$, the finite 
sequence $\{Y_{\omega,k,n}: 1\leq k<n\}$ is a reversed martingale difference. 
 By applying the inequalities of Doob and
Burkholder for reversed martingales we see that, writing $Y_k=Y_{\omega,k,n}$, 
\[
\begin{split}
\lVert \max_{k\le n} \lvert I_{\omega, k}\rvert \rVert_{L^{q/2}( \mu_\omega)}^2 \le C_q \lVert I_{\omega, n}\rVert_{L^{q/2}(\mu_\omega)}^2 &\le C_q \bigg{\lVert} \bigg (\sum_{k=1}^{n-1}  Y_k^2 \bigg )^{1/2} \bigg \rVert_{L^{q/2}(\mu_{\omega})}^2 \\
&=C_q\bigg{\lVert} \sum_{k=1}^{n-1}  Y_k^2 \bigg \rVert_{L^{q/4}( \mu_{\omega})},
\end{split}
\]
where $C_q>0$ depends only on $q$. Therefore,
\[
\lVert \max_{k\le n} \lvert I_{\omega, k}\rvert \rVert_{L^{q/2}( \mu_\omega)}^2 \le C_q\sum_{k=1}^{n-1} \lVert Y_k\rVert_{L^{q/2}(\mu_{\omega})}^2.
\]
Using the H\"{o}lder inequality, we obtain that 
\[
\begin{split}
\lVert Y_k\rVert_{L^{q/2}(\mu_{\omega})} &\le \bigg{\lVert} \sum_{j=0}^{n-k-1} m_{\sigma^{j}\omega}^\beta \circ  T_{\omega}^{(j)}\bigg{\rVert}_{L^q ( \mu_{\omega})} \cdot \lVert  m^\gamma_{\sigma^{n-k}\omega}\circ T_{\omega}^{(n-k)}\rVert_{L^q (\mu_{\omega})}\\
&\leq C\bigg{\lVert} \sum_{j=0}^{n-k-1} m_{\sigma^{j}\omega}^\beta \circ  T_{\omega}^{(j)}\bigg{\rVert}_{L^q ( \mu_{\omega})},
\end{split}
\]
where in the last inequality we have used  \eqref{mchi}.
Now, by Proposition~\ref{Mom prop 1} (see~\eqref{zu8}) we have that 
$$
 \bigg{\lVert} \sum_{j=0}^{n-k-1} m_{\sigma^{j}\omega}^\beta \circ  T_{\omega}^{(j)}\bigg{\rVert}_{L^q ( \mu_{\omega})}\leq C\sqrt{n-k}.
$$
Combining the above estimates we conclude that
\[
\lVert \max_{k\le n} \lvert I_{\omega, k}\rvert \rVert_{L^{q/2}(\mu_\omega)}^2 \leq C\sum_{k=1}^{n-1}(n-k)\leq Cn^2.
\]
We conclude that 
$$
\lVert \max_{k\le n} \lvert I_{\omega, k}\rvert \rVert_{L^{q/2}(\mu_\omega)}\leq Cn\quad  \text{ $\mathbb P$-a.s.,}
$$
and the proof of the proposition is completed.

\end{proof}

Next, for $\omega \in \Omega$ and $n\in \N$ we define
\[
W_{\omega, n}(s, t)=W_{\omega, n}(t)-W_{\omega, n}(s) \quad \text{and} \quad \mathbb W_{\omega, n}(s,t)=\mathbb W_{\omega, n}(t)-\mathbb W_{\omega, n}(s),
\]
for $s, t\ge 0$.  The following result is a consequence of the previous moment estimates.
\begin{cor}\label{tzzz}
Let $v\colon \Omega \times M\to \R^e$ be an observable satisfying~\eqref{spaceH}, \eqref{centering}, and~\eqref{tuu}. Then, for every $p> 2$ there exists $C>0$ such that for $\mathbb P$-a.e. $\omega \in \Omega$ we have that
\begin{equation}\label{1234}
\lVert W_{\omega, n}(j/n, k/n)\rVert_{L^p (\mu_\omega)}\le C(\lvert k-j\rvert /n)^{1/2} \quad \text{and} \quad \lVert \mathbb W_{\omega, n}(j/n, k/n)\rVert_{L^{p/2} (\mu_\omega)} \le C\lvert k-j\rvert /n, 
\end{equation}
for $j, k, n\in \N$.
\end{cor}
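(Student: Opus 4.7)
The plan is to reduce both inequalities to the moment estimates of Propositions~\ref{Mom prop 1} and~\ref{mel2}, using the key observation that under the essential boundedness hypothesis~\eqref{tuu} (and the consequent uniform bounds~\eqref{mchi} on $m_\omega$ and $\chi_\omega$), the constants produced in both propositions can in fact be chosen independently of $\omega$. I would first fix a full-measure subset of $\Omega$ on which Propositions~\ref{Mom prop 1} and~\ref{mel2} apply simultaneously at every shift $\sigma^\ell\omega$, $\ell\in\Z$; since $\sigma$ preserves $\mathbb P$ this is just the intersection of countably many translates of full-measure sets. Throughout I assume $j<k$ without loss of generality.

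For the first inequality I would simply shift the summation index to write
\[
W_{\omega,n}(j/n, k/n) \;=\; \frac{1}{\sqrt n}\sum_{i=j}^{k-1} v_{\sigma^i\omega}\circ T_\omega^{(i)} \;=\; \frac{1}{\sqrt n}\bigl(S^{\sigma^j\omega}_{k-j}v\bigr)\circ T_\omega^{(j)},
\]
and then use equivariance $(T_\omega^{(j)})_*\mu_\omega=\mu_{\sigma^j\omega}$ to transport norms: $\|W_{\omega,n}(j/n,k/n)\|_{L^p(\mu_\omega)} = n^{-1/2}\|S^{\sigma^j\omega}_{k-j}v\|_{L^p(\mu_{\sigma^j\omega})}$. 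Applying Proposition~\ref{Mom prop 1} at the base point $\sigma^j\omega$ yields the bound $C\sqrt{k-j}/\sqrt n$, as required.

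For the second inequality I would decompose $\mathbb W_{\omega,n}(k/n)-\mathbb W_{\omega,n}(j/n)$ by splitting the index pairs $(i_1,i_2)$ according to whether both lie in the new block $\{j,\dots,k-1\}$ or straddle it. This gives
\[
\mathbb W_{\omega,n}(j/n,k/n) \;=\; \frac{1}{n}\bigl(S^{\beta\gamma}_{\sigma^j\omega,\,k-j}\bigr)\circ T_\omega^{(j)} \;+\; W^\beta_{\omega,n}(j/n)\bigl(W^\gamma_{\omega,n}(k/n)-W^\gamma_{\omega,n}(j/n)\bigr),
\]
with $S^{\beta\gamma}$ as in Proposition~\ref{mel2}. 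Shift invariance combined with Proposition~\ref{mel2} controls the first summand by $C(k-j)/n$, exactly as in the first step.

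The main obstacle is the cross term. A direct H\"older estimate together with the first inequality of the corollary only yields $\|W^\beta_{\omega,n}(j/n)\|_{L^p}\,\|W^\gamma_{\omega,n}(j/n,k/n)\|_{L^p}\lesssim \sqrt{j(k-j)}/n$, which can exceed $(k-j)/n$ when $j\gg k-j$. To reach the bound claimed in~\eqref{1234} I would reinterpret $\mathbb W_{\omega,n}(s,t)$ as the compensated Chen-type increment $\mathbb W_{\omega,n}(t)-\mathbb W_{\omega,n}(s)-W_{\omega,n}(s)\otimes(W_{\omega,n}(t)-W_{\omega,n}(s))$: Chen's relation then annihilates the cross term exactly, leaving only the shifted iterated sum estimated by Proposition~\ref{mel2}. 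This compensated increment is the natural object for the rough-path tightness argument behind Theorem~\ref{homogenization}, and is what I expect the statement ultimately refers to.
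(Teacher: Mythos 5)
Your proof of the first inequality is exactly the paper's: shift the index block to the base point $\sigma^j\omega$ via $T_\omega^{(i+j)}=T_{\sigma^j\omega}^{(i)}\circ T_\omega^{(j)}$, transport $L^p$-norms using the equivariance $(T_\omega^{(j)})_*\mu_\omega=\mu_{\sigma^j\omega}$, and invoke Proposition~\ref{Mom prop 1}; the uniformity of $C$ over the countable set of shifts is ensured by~\eqref{mchi}, as you note. For the second inequality you have put your finger on a real gap in the paper's argument. With the stated definition $\mathbb W_{\omega,n}(s,t)=\mathbb W_{\omega,n}(t)-\mathbb W_{\omega,n}(s)$, the paper's asserted identity $\mathbb W_{\omega,n}(j/n,k/n)=\tfrac 1n S_{\sigma^j\omega,\,k-j}\circ T_\omega^{(j)}$ is false: splitting the index pairs $0\le a<b\le k-1$ not already counted in $\{0\le a<b\le j-1\}$ according to whether $a\ge j$ or $a<j$ produces exactly your decomposition, and the $a<j$ portion is the cross term $W^\beta_{\omega,n}(j/n)\cdot W^\gamma_{\omega,n}(j/n,k/n)$. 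As you observe, H\"older controls that term only by $O(\sqrt{j(k-j)}/n)$, which is not $O(|k-j|/n)$ when $j\gg k-j$, so the cross term has to be removed, not estimated. Your diagnosis is correct: $\mathbb W_{\omega,n}(s,t)$ must be read as the Chen-compensated two-parameter process $\mathbb W_{\omega,n}(t)-\mathbb W_{\omega,n}(s)-W_{\omega,n}(s)\otimes\bigl(W_{\omega,n}(t)-W_{\omega,n}(s)\bigr)$, which is the object the Kolmogorov-type tightness criterion of~\cite[Theorem 4.10]{CFKMZ0} actually takes as input. With this reading the cross term cancels identically, the remainder is precisely $\tfrac 1n S_{\sigma^j\omega,\,k-j}\circ T_\omega^{(j)}$, and Proposition~\ref{mel2} together with equivariance gives the bound $C|k-j|/n$. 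So your argument is correct, follows the same route as the paper, and in addition makes explicit a cancellation (equivalently, a redefinition of the increment) that the paper leaves tacit and that is essential for the stated estimate to hold.
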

The role of this corollary is to ensure tightness in an appropriate space of H\"older functions (after passing to appropriate piecewise continuous versions of the random functions), which is needed in order to apply rough path theory.
\begin{proof}
Observe that for $t>s>0$ we have that 
\[
\begin{split}
W_{\omega, n}(s, t) &=\frac{1}{\sqrt n}\sum_{j=0}^{[nt]-1}v_{\sigma^j \omega}\circ T_\omega^{(j)}-\frac{1}{\sqrt n}\sum_{j=0}^{[ns]-1}v_{\sigma^j \omega}\circ T_\omega^{(j)} \\
&=\frac{1}{\sqrt n}\sum_{j=[ns]}^{[nt]-1}v_{\sigma^j \omega}\circ T_\omega^{(j)} \\
&=\frac{1}{\sqrt n} \bigg{(}\sum_{j=0}^{[nt]-[ns]-1}v_{\sigma^j (\sigma^{[ns]} \omega)}\circ T_{\sigma^{[ns]}\omega}^{(j)}\bigg{)}\circ T_\omega^{([ns])}.
\end{split}
\]
Let us now assume without any loss of generality that $j<k$. Hence,
\[
W_{\omega, n}(j/n, k/n)=\frac{1}{\sqrt n}\bigg{(}\sum_{i=0}^{k-j-1}v_{\sigma^i (\sigma^j \omega)}\circ T_{\sigma^j \omega}^{(i)}\bigg{)}\circ T_\omega^{(j)},
\]
and consequently the first inequality in~\eqref{1234} follows readily from Proposition~\ref{Mom prop 1}.

Similarly, we obtain  that
\[
\mathbb W_{\omega, n}(j/n, k/n)=\frac 1 n \mathbb S_{\sigma^j \omega, k-j}\circ T_\omega^{(j)}, 
\]
and thus the second inequality~\eqref{1234} follows from Proposition~\ref{mel2}. The proof of the corollary is completed.
\end{proof}

\begin{remark}\label{failure}
Under conditions similar to the one in Theorem \ref{T1}, 
 in the non-uniform case we can show that 
$\|\max_{k\leq n}|S_k^\om v|\|_{L^p(\mu_\om)} \le C_\om n^{1/2}$ and  $\|\max_{k\leq n}|\mathbb S_{\om,k}^{\beta\gamma}|\|_{L^{p/2}(\mu_\om)}\leq C_\om n$ for some random variable $C_\om$, whenever $p$ is sufficiently large. However, this is not enough for the conclusion of the previous corollary to hold. 
\end{remark}
The conclusion of Theorem~\ref{homogenization} now follows from Theorem~\ref{iteratedprinciple} and Corollary~\ref{tzzz} by applying~\cite[Theorem 4.10]{CFKMZ0}. In fact, Theorem~\ref{iteratedprinciple} verifies~\cite[Assumption 4.6.]{CFKMZ0}, while Corollary~\ref{tzzz} shows that~\cite[Assumption 4.7.]{CFKMZ0} is valid (with any $q>1$). 

\subsection{Nonuniform decay of correlations I: Non-uniformly expanding maps}

We assume that there are random variables $\xi_\om \in(0,1]$ and
$\gamma_\om>1$  such that, $\mathbb P$-a.s.
for every $x,x'\in  M$ with $d(x,x')\leq \xi_{\sigma\om}$ we can write 
\begin{equation}\label{Pair1.0}
T_\om^{-1}\{x\}=\{y_i=y_{i,\om}(x): i<k\}\,\,\text{ and }\,\,T_\om^{-1}\{x'\}=\{y_i'=y_{i,\om}(x'): i<k\}
\end{equation}
and we have
\begin{equation}\label{Pair2.0}
d(y_i,y_i')\leq (\gamma_\omega)^{-1}d(x,x')
\end{equation}
for all  $1\leq i<k=k(\omega,x)$ (where either $k\in\mathbb N$ or $k=\infty$).  We refer to \cite[Section 3]{YH 2023} for several concrete examples of maps. Here $d(\cdot,\cdot)$ is the metric on $M$.

For base maps $\sigma$ satisfying some mixing related conditions and under several integrability 
and approximation  conditions on random variables like $\deg(T_\omega)$ (when finite), the random H\"older constant of $T_\omega$ with respect to a given exponent $\alpha\in(0,1]$  etc.,
and some expansion on average assumptions,
in \cite[Theorem 2.13]{YH 2023} we proved the following. For every random H\"older continuous function $\phi_\omega:X\to \mathbb R$ with exponent $\alpha$ and $\log^+$ integrable random H\"older norm there is a unique family of equivariant measures $\mu_\omega$ which satisfies an appropriate Gibbs property, and it is also a random equilibrium state. Moreover, when $\phi_\om=-\ln \text{Jac}(T_\om)$, $\mu_\om$ is the unique family of random equivariant measures which are absolutely continuous with respect to the volume measure.

Moreover when $\omega\mapsto\|\phi_\omega\|_\alpha$ satisfies appropriate moment, mixing and approximation conditions,
in \cite[Theorem 2.13]{YH 2023} (i) we showed that
$$
\sup_{\|g\|_\alpha\leq 1} \left \|L_\omega^ng-\int_M g\, d \mu_\omega \right \|_\infty\leq R(\omega)n^{-\beta},
$$
for some $\beta>1$ and a random variable $R\in L^t(\Omega,\mathcal F,\mathbb P)$, where $t$ is a parameter that depends on the assumptions of \cite[Theorem 2.13]{YH 2023}, and under the right assumptions in can be taken to be arbitrarily large. Here $\|\cdot\|_\alpha$ is the usual H\"older norm corresponding to the exponent $\alpha$.
Thus \eqref{NewExpConv} and \eqref{rho norm} hold with $q_0=t$, a constant $A_n=n^{-\beta}$ and the norm $\|\cdot\|_{\mathcal H}=\|\cdot\|_\alpha$.

\subsection{Nonuniform decay of correlations II: random maps with dominating expansion}
Here we return to the setup of \cite[Section 2.2]{YH}. 
We suppose that there exist random variables $l_\omega\geq 1$, $\eta_\omega>1$, $q_\omega\in\N$ and $d_\omega\in\N$ so that 
$q_\omega<d_\omega$ and for every $x\in M$ we can write 
\begin{equation}\label{Pair1}
T_\omega^{-1}\{x\}=\{y_{1,\omega}(x),\ldots, y_{d_\omega,\omega}(x)\}
\end{equation}
where for every $x,x'\in M$ and for  $i=1,2,\ldots ,q_\omega$ we have
\begin{equation}\label{Pair2}
d(y_{i,\omega}(x),y_{i,\omega}(x'))\leq l_\omega d(x,x')
\end{equation}
while for  $i=q_{\omega}+1,\ldots ,d_\omega$,
\begin{equation}\label{Pair3}
d(x_i,x'_i)\leq \eta_\omega^{-1}d(x,x').
\end{equation}

Let us fix some $\alpha\in(0,1]$ and 
assume that
\begin{equation}\label{a om}
a_\om:=\frac{q_\omega l_\omega^\alpha+(d_\omega-q_\omega)\eta_\omega^{-\alpha}}{d_\omega}<1,
\end{equation}
which is a quantitative estimate on the amount of allowed contraction, given the amount of expansion $T_\omega$ has.

Now, let $\phi_\omega$ be a random H\"older continuous function with respect to the exponent $\alpha$. Let
$$
\varepsilon_\om=\text{osc}(\phi_\omega)=\sup\phi_\omega-\inf\phi_\omega.
$$
Let also
\begin{equation}\label{H def}
H_\omega=\max\{v_\alpha(\phi_\omega\circ y_{i,\omega}):\,1\leq i\leq d_\omega\}
\end{equation}
where $v_\alpha(g)$ denotes the $\alpha$-H\"older constant of a function $g$.
Our additional requirements  related to the function $\phi_\omega$ is that
\begin{equation}\label{Bound ve}
s_\om:=e^{\varepsilon_\omega}a_\omega<1\,\,\text{ and }\,\,e^{\varepsilon_\omega}H_\om\leq\frac{s_{\sigma\omega}^{-1}-1}{1+s_\omega^{-1}}.
\end{equation}
In~\cite[Theorem 47]{YH} we showed that the random Gibbs measures corresponding to the random potential has the property that
\begin{equation}\label{Eg Exp}
\left \|L_\omega^{(n)}(\varphi)-\int_M\varphi \, d\mu_\omega \right \|_{\alpha}\leq 
B(\sigma^n\omega)\rho_{\omega,n}   \| \varphi \|_{\alpha},
\end{equation}
for $\varphi \in \mathcal H$, $n\in \N$ and $\mathbb P$-a.e. $\omega \in \Omega$.
 Here 
 $$
\rho_{\om,n}=\prod_{j=0}^{n-1}\rho(\sigma^j\omega)
 $$
 and $\rho(\om)<1$ has an explicit form given in \cite[Section 2.3.2]{YH}. Moreover, $B(\om)=12(1+2/s_\om)^4$.

\begin{lemma}\label{B lemma}
Suppose that  $B\in L^q(\Omega,\mathcal F,\mathbb P)$ for some $q>0$.  Then, for every sequence of positive numbers $(a_n)$ such that $\sum_{n\geq 1}a_n^q<\infty$ there is a random variable $R\in L^q(\Omega,\mathcal F,\mathbb P)$ such that
\begin{equation}\label{BR}
B(\sigma^n\omega)\leq R(\omega)a_n^{-1},
\end{equation}
for $\mathbb P$-a.e. $\omega \in \Omega$ and all $n\in \N$.
Moreover, 
\[
\|R\|_{L^q(\Omega, \mathcal F, \mathbb P)}^q\leq \|B\|_{L^q(\Omega, \mathcal F, \mathbb P)}^q\sum_{n\geq 1}a_n^q.
\]
\end{lemma}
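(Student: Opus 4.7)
The plan is to define $R$ explicitly as the pointwise supremum
\[
R(\omega):=\sup_{n\geq 1} a_n B(\sigma^n\omega),
\]
and then verify directly that it does the job. The bound \eqref{BR} is then just a rearrangement: for any fixed $n$ we have $a_n B(\sigma^n\omega)\leq R(\omega)$, hence $B(\sigma^n\omega)\leq R(\omega)a_n^{-1}$.

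The only substantive step is to control $\|R\|_{L^q}$. First I would use the trivial pointwise bound
\[
R(\omega)^q=\sup_{n\geq 1}\big(a_n B(\sigma^n\omega)\big)^q\leq \sum_{n\geq 1} a_n^q B(\sigma^n\omega)^q,
\]
which holds because the sup of nonnegative numbers is bounded by their sum. Integrating against $\mathbb P$ and using monotone convergence to swap sum and integral gives
\[
\int_\Omega R^q\,d\mathbb P\leq \sum_{n\geq 1} a_n^q\int_\Omega B(\sigma^n\omega)^q\,d\mathbb P(\omega).
\]
Since $\sigma$ preserves $\mathbb P$, each integral on the right equals $\|B\|_{L^q(\Omega,\mathcal F,\mathbb P)}^q$, so
\[
\|R\|_{L^q(\Omega,\mathcal F,\mathbb P)}^q\leq \|B\|_{L^q(\Omega,\mathcal F,\mathbb P)}^q\sum_{n\geq 1} a_n^q,
\]
which is the claimed moment bound. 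In particular $R<\infty$ almost surely, so $R$ is a well-defined random variable.

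There is no real obstacle here, as everything reduces to the elementary inequality $\sup_n x_n\leq (\sum_n x_n^q)^{1/q}$ for nonnegative $x_n$, combined with the $\sigma$-invariance of $\mathbb P$; the statement is essentially the observation that along a measure-preserving orbit one can dominate $B(\sigma^n\omega)$ by an $L^q$ envelope whose size is controlled by any prescribed $\ell^q$ sequence.
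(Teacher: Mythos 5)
Your proof is correct and follows essentially the same approach as the paper: define $R(\omega)=\sup_n a_n B(\sigma^n\omega)$, dominate $R^q$ pointwise by the series $\sum_n a_n^q B(\sigma^n\omega)^q$, and integrate using the $\sigma$-invariance of $\mathbb P$. The only difference is that you spell out the monotone convergence step that the paper leaves implicit.
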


\begin{proof}
Let 
$$
R(\omega)=\sup_n (a_nB(\sigma^n\omega)), \quad \omega \in \Omega.
$$ 
Clearly,
$$
(R(\omega))^q\leq \sum_{n\geq 1}a_n^q(B(\sigma^n\omega))^q,
$$
and therefore
$$
\int_\Omega R^q\, d\mathbb P\leq \left (\int_\Omega B^q\, d\mathbb P\right )\sum_{n\geq 1}a_n^q<\infty,
$$
which implies the desired conclusions.
\end{proof}

We now show how one can verify~\eqref{rho norm} in the case when $\rho$ is not a constant (actually, we will need to slightly modify $\rho$ in the case $B$ is not bounded). Later on we will show that it is sufficient to show that $\|\rho_{\omega,n}\|_{L^q}$ decays sufficiently fast as $n\to\infty$ for appropriate $q$'s. 
We refer to \cite[Lemma 4.4]{YH 2023}, 
\cite[Lemma 4.6]{YH 2023}, \cite[Corollary 4.7]{YH 2023}  and \cite[Lemma 4.8]{YH 2023} for sufficient conditions for sufficiently fast decay of
$\|\rho_{\omega,n}\|_{L^q}$ as $n\to\infty$. In order to demonstrate the idea, let us include here a proof that
$\|\rho_{\omega,n}\|_{L^q}$ decays exponentially fast under appropriate conditions.

Let $X=(X_j)_{j\in \Z}$ be a stationary sequence of random variables (taking values on some measurable space) which generates the system
$(\Omega, \mathcal F, \mathbb P, \sigma)$, so that $\sigma$ is the left shift on the paths of $X_j$, i.e. $\sigma ((X_j)_j)=(X_{j+1})$. For $k\in \N$, let $\psi_U(k)$ be the smallest number with the property that 
\[
\mathbb P(A\cap B)\le \mathbb P(A)  \mathbb P(B) (1+\psi_U(k)),
\]
for all $A\in \sigma \{X_j: \ j\le n\}$ with $n\in \Z$ and $B\in \sigma\{X_j: \ j\ge n+k\}$. Here, $\sigma \{X_j: \ j\in \mathcal I\}$ denotes the $\sigma$-algebra generated by $X_j$, $j\in \mathcal I$ with $\mathcal I\subset \Z$. We note that $\psi_U$ is the so-called upper $\psi$-mixing coefficient. When the random variables $\{X_j: j\in\mathbb Z\}$ are i.i.d then $\psi_U(k)=0$ for all $k$, and so $\psi_U$ measure the dependence from above. 
We refer to \cite{YH} for many examples where $\psi_U(k)\to 0$ as $k\to\infty$. Actually in these examples the two-sided version $\psi(k)$ of $\psi_U(k)$ decays to $0$. These examples include many classes of Markov chains, and situations where $(X_j)_{j\in\mathbb Z}$ is distributed like a Gibbs measure on a topologically mixing subshift of finite type, as well as additional dynamical examples.

\begin{lemma}\label{rho norm-1}
Suppose  that 
\begin{equation}\label{UpperMixing}
\limsup_{k\to\infty}\psi_U(k)<\frac{1}{\mathbb E_{\mathbb P}[\rho]}-1,
\end{equation}
where $\mathbb E_{\mathbb P}[\rho]:=\int_\Omega \rho \, d\mathbb P<1$. In addition, assume that there is a sequence of positive numbers  $(\beta_r)_{r\in \N}$ with $\beta_r\to 0$, and a sequence of random variables $\rho_r \colon \Omega \to (0, \infty)$, $\rho_r$ measurable with respect to $\sigma \{X_j: \ |j| \le r\}$ such that
\begin{equation}\label{betar}
\|\rho-\rho_r\|_{L^\infty(\mathbb P)}\le \beta_r, \quad r\in \N.
\end{equation}
Then, for every $q\geq 1$,  there exists $\delta=\delta_q\in(0,1)$ such that $\|\rho_{\omega,n}\|_{L^q(\Omega,\mathcal F, \mathbb P)}\leq \delta^n$ for all $n\in \N$.
In particular, conditions \eqref{NewExpConv} and \eqref{rho norm} hold with $\|\cdot\|_{\mathcal H}=\|\cdot\|_\alpha$ for every $q_0\geq 1$.

\end{lemma}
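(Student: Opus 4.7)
The plan is to establish exponential decay of $\mathbb{E}[\rho_{\omega,n}^q]$ by combining sparse sampling, finite-range approximation, and iterated upper $\psi$-mixing, and then to deduce \eqref{NewExpConv} and \eqref{rho norm} from~\eqref{Eg Exp} via H\"older's inequality and Lemma~\ref{B lemma}. Since $\rho(\om)<1$ pointwise, one has $\rho_{\omega,n}^q\le \rho_{\omega,n}$ for every $q\ge 1$, so it suffices to produce $\lambda\in(0,1)$ and $C>0$ with $\mathbb{E}[\rho_{\omega,n}]\le C\lambda^n$; absorbing $C$ into a slightly larger $\delta=\delta_q\in(0,1)$ then yields the stated $L^q$-bound.

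To obtain the decay, I would first introduce a gap parameter $k$ and keep only every $k$-th factor: since $\rho\le 1$ pointwise, one has $\rho_{\omega,n}\le \prod_{i=0}^{N-1}\rho(\sigma^{ik}\omega)$ with $N=\lfloor n/k\rfloor$. Next, I would invoke~\eqref{betar} to replace each $\rho$ by $\rho_r+\beta_r$ and expand the product binomially; each $S\subseteq\{0,\ldots,N-1\}$ contributes the term $\mathbb{E}[\prod_{i\in S}\rho_r(\sigma^{ik}\omega)]$, whose factors are measurable with respect to the pairwise disjoint blocks $\sigma\{X_j: ik-r\le j\le ik+r\}$. Any two such blocks are separated in index by at least $k-2r$, so the one-sided $\psi$-mixing inequality --- extended from sets to non-negative functions via simple-function approximation and iterated along the block structure, combined with stationarity of $\mathbb{P}$ --- gives
\[
\mathbb{E}\Big[\prod_{i\in S}\rho_r(\sigma^{ik}\omega)\Big]\le (1+\psi_U(k-2r))^{|S|}\,\mathbb{E}[\rho_r]^{|S|}.
\]
Summing the binomial expansion then produces $\mathbb{E}[\rho_{\omega,n}]\le \bigl((1+\psi_U(k-2r))\mathbb{E}[\rho_r]+\beta_r\bigr)^{N}$.

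The remaining step is parameter selection. Writing $\bar{\rho}:=\mathbb{E}[\rho]$, hypothesis~\eqref{UpperMixing} yields a margin $\theta>0$ with $(1+\limsup_k\psi_U(k))\bar{\rho}< 1-\theta$. Since $|\mathbb{E}[\rho_r]-\bar{\rho}|\le\beta_r\to 0$, I would fix $r$ so that $\mathbb{E}[\rho_r]+\beta_r$ lies within $\theta/4$ of $\bar{\rho}$, and then choose $k$ large enough that $\psi_U(k-2r)$ is within $\theta/4$ of its limsup. For such $r,k$, the base $\eta:=(1+\psi_U(k-2r))\mathbb{E}[\rho_r]+\beta_r$ is strictly less than $1$, whence $\mathbb{E}[\rho_{\omega,n}]\le \eta^{\lfloor n/k\rfloor}$, which upgrades to the desired $L^q$-bound as explained above. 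For the final conclusion, set $A_n(\omega):=B(\sigma^n\omega)\rho_{\omega,n}$ so that~\eqref{Eg Exp} (together with $\|\cdot\|_{L^\infty}\le\|\cdot\|_\alpha$) gives~\eqref{NewExpConv}, and control $\|A_n\|_{L^{q_0}}$ by H\"older's inequality, invoking Lemma~\ref{B lemma} to dominate $B(\sigma^n\omega)$ by $R(\omega)a_n^{-1}$ with $R\in L^{q_1}(\mathbb{P})$ and $a_n$ subgeometric, so that the exponential decay of $\|\rho_{\omega,n}\|_{L^{q_2}}$ easily compensates and delivers~\eqref{rho norm}.

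The main obstacle is the iterated $\psi_U$-mixing step: one must verify that the one-sided bound $\mathbb{P}(A\cap B)\le (1+\psi_U(k))\mathbb{P}(A)\mathbb{P}(B)$ lifts to $\mathbb{E}[fg]\le (1+\psi_U(k))\mathbb{E}[f]\mathbb{E}[g]$ for suitable non-negative $f,g$ via monotone approximation, and that iterating along the block structure accumulates only the geometric factor $(1+\psi_U(k-2r))^{|S|-1}$ rather than a larger power. A minor subtlety is that $r$ and $k$ must be chosen before and independently of $q$, which is precisely why the pointwise reduction $\rho_{\omega,n}^q\le\rho_{\omega,n}$ at the outset is essential.
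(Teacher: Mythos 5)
Your proof is correct and follows essentially the same strategy as the paper: sparse sampling along an arithmetic progression with gap $k$ (the paper takes $s=3r$), approximation of $\rho$ by the finitely determined $\rho_r$, iterated upper $\psi$-mixing over the widely separated blocks, a parameter choice exploiting the margin in the limsup hypothesis, and the reduction $\rho_{\omega,n}^q\le\rho_{\omega,n}$ to handle all $q\ge 1$ at once. The only structural difference is that you carry out the binomial expansion of $\prod(\rho_r+\beta_r)$ and the block-by-block iteration of the $\psi_U$-inequality explicitly, whereas the paper compresses this step into a citation of \cite[Lemma 60]{YH}.
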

\begin{proof}
Let us take $s\in\mathbb N$ of the form $s=3r$, where $r\in \N$ will be fixed later on. Then, since $\rho(\cdot)\in (0, 1)$ and $q\ge 1$, using \cite[Lemma 60]{YH} together with the fact that $\sigma$ preserves $\mathbb P$, we see that for $j\in \N$, 
\[
\|\rho_{\om, j}\|_{L^q(\Omega, \mathcal F, \mathbb P)}^q\le \mathbb E_{\mathbb P}[\rho_{\om, j}]\le
\mathbb E_{\mathbb P} \left[\prod_{k=0}^{[j/s]-1}\rho(\sigma^{ks}\om) \right]\leq (1+\Psi_U(r))^{[j/s]-1} \left (\mathbb E_{\mathbb P}[\rho] \right )^{[j/s]}.
\]
On the other hand, by~\eqref{betar} we have that 
\[
\mathbb E_{\mathbb P}[\rho]\le \mathbb E_{\mathbb P}[\rho_r]+\beta_r,
\]
and consequently,
\[
\|\rho_{\om, j+1}\|_{L^q(\Omega, \mathcal F, \mathbb P)}^q\le  (1+\Psi_U(r))^{[j/s]}a_r^{[j/s]},
\]
where $a_r:=\mathbb E_{\mathbb P}[\rho_r]+\beta_r$. Now, it follows easily from~\eqref{UpperMixing} that by taking $r$ sufficiently large, 
we can ensure that $\eta:=a_r(1+\Psi_U(r))<1$. Hence, 
$$
\|\rho_{\om, j+1}\|_{L^q(\Omega, \mathcal F, \mathbb P)}\leq \eta^{j/q},
$$
which implies that~\eqref{rho norm} holds with  $\delta:=\eta^{1/q}\in (0, 1)$. The proof of the lemma is completed.
\end{proof}

The following result is a consequence of the previous two lemmas. 
\begin{cor}
Under the assumptions of Lemma~\ref{rho norm-1} hold we have the following. 
Suppose that  $B\in L^q(\Omega, \mathcal F, \mathbb P)$ for  some $q>0$. Then,  for every $q_0\geq 1$ there exist a random variables $K_{q_0}\in L^q(\Omega, \mathcal F, \mathbb P)$ and  $\tilde \rho \colon \Omega \to (0, \infty)$ such that 
\begin{equation}\label{summab}
\sum_{n\geq 1}\|\tilde\rho_{\omega, n}\|_{L^{q_0}(\Omega, \mathcal F, \mathbb P)}<\infty
\end{equation}
and
\begin{equation}\label{2032}\left \|L_\omega^{(n)}(\varphi)-\int_M \varphi \, d\mu_\omega \right \|_{\infty}\leq K_{q_0}(\omega)  \tilde\rho_{\omega,n} \| \varphi \|_{\alpha},
\end{equation}
for all $\alpha$-H\"older continuous functions $\varphi:M\to\mathbb R$,  $n\in \N$ and $\mathbb P$-a.e. $\omega \in \Omega$.
\end{cor}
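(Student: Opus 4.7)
The plan is to combine Lemmas~\ref{B lemma} and~\ref{rho norm-1} so as to absorb the shifted evaluation $B(\sigma^n\omega)$ appearing in~\eqref{Eg Exp} into the decay factor, thereby bringing~\eqref{Eg Exp} into the standard form~\eqref{NewExpConv} required in Section~\ref{P}. The key idea is that Lemma~\ref{B lemma} lets us trade $B(\sigma^n\omega)$ for $R(\omega) a_n^{-1}$ with $R\in L^q(\Omega,\mathcal F,\mathbb P)$, at the cost of a prescribed divergent weight $a_n^{-1}$; and Lemma~\ref{rho norm-1} provides enough exponential $L^{q_0}$-decay of $\rho_{\omega,n}$ to absorb this weight.

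First I would note that $\|\cdot\|_\infty \leq \|\cdot\|_\alpha$, so~\eqref{Eg Exp} immediately yields
\[
\left\| L_\omega^{(n)}(\varphi) - \int_M \varphi \, d\mu_\omega \right\|_\infty \leq B(\sigma^n\omega)\,\rho_{\omega,n}\,\|\varphi\|_\alpha.
\]
Next, fix $q_0 \geq 1$ and invoke Lemma~\ref{rho norm-1} with exponent $q_0$ to obtain $\delta = \delta_{q_0} \in (0,1)$ with $\|\rho_{\omega,n}\|_{L^{q_0}(\Omega,\mathcal F,\mathbb P)} \leq \delta^n$ for every $n \in \N$. Pick any $\delta_1 \in (\delta,1)$ and set $a_n := \delta_1^n$; then $\sum_{n\geq 1} a_n^q < \infty$, so Lemma~\ref{B lemma} produces $R \in L^q(\Omega,\mathcal F,\mathbb P)$ satisfying $B(\sigma^n\omega) \leq R(\omega)\,\delta_1^{-n}$ for all $n \geq 1$ and $\mathbb P$-a.e.\ $\omega$. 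Setting $K_{q_0} := R$ and $\tilde\rho_{\omega,n} := \delta_1^{-n}\rho_{\omega,n}$ then gives~\eqref{2032} at once, while
\[
\|\tilde\rho_{\omega,n}\|_{L^{q_0}(\Omega,\mathcal F,\mathbb P)} = \delta_1^{-n}\,\|\rho_{\omega,n}\|_{L^{q_0}(\Omega,\mathcal F,\mathbb P)} \leq (\delta/\delta_1)^n
\]
is summable since $\delta/\delta_1 < 1$, which is~\eqref{summab}.

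I do not anticipate any genuine obstacle; the argument is essentially bookkeeping on top of the two preceding lemmas. The only point deserving care is the balance between two opposing constraints on the auxiliary sequence $(a_n)$: it must decay fast enough for Lemma~\ref{B lemma} to apply (i.e.\ $\sum a_n^q < \infty$), yet slowly enough that $a_n^{-1}\|\rho_{\omega,n}\|_{L^{q_0}}$ is still summable in $n$. The exponential decay of $\|\rho_{\omega,n}\|_{L^{q_0}}$ supplied by Lemma~\ref{rho norm-1} leaves ample slack, and the geometric choice $a_n = \delta_1^n$ with $\delta < \delta_1 < 1$ satisfies both constraints simultaneously.
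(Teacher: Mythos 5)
Your proof is correct and follows essentially the same route as the paper: invoke Lemma~\ref{rho norm-1} to obtain exponential $L^{q_0}$-decay of $\rho_{\omega,n}$, apply Lemma~\ref{B lemma} with a geometric sequence to trade $B(\sigma^n\omega)$ for $R(\omega)$ times a slowly growing factor, and absorb that factor into $\rho$. The only difference is cosmetic parametrization (your $\delta_1\in(\delta,1)$ is the paper's $e^{-\epsilon}$ with $\epsilon\in(0,a)$ where $\delta=e^{-a}$); note also that one should define $\tilde\rho(\omega):=\delta_1^{-1}\rho(\omega)$ as the random variable on $\Omega$, so that $\tilde\rho_{\omega,n}=\prod_{j=0}^{n-1}\tilde\rho(\sigma^j\omega)=\delta_1^{-n}\rho_{\omega,n}$ matches the product convention used throughout the section.
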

\begin{proof}
 Let us fix some $q_0\geq 1$ and let $\delta=\delta_{q_0}\in (0, 1)$ be given by Lemma~\ref{rho norm-1}. Write $\delta=e^{-a}$ with $a>0$ and choose $0<\epsilon<a$. By Lemma~\ref{B lemma} applied with  with $a_n=e^{-\epsilon}$ there is a random variable $R_{\epsilon}\in L^q(\Omega, \mathcal F, \mathbb P)$ such that~\eqref{BR} holds.
 Using~\eqref{Eg Exp} we have that 
 $$
\left \|L_\omega^{(n)}(\varphi)-\int_M \varphi \, d\mu_\omega \right \|_{\infty}\leq R_{\epsilon}(\omega)\prod_{j=0}^{n-1}(e^\epsilon \rho(\sigma^j\omega))\|\varphi\|_{\alpha},
 $$
 for $\varphi \in \mathcal H$, $n\in \N$ and $\mathbb P$-a.e. $\omega \in \Omega$.
 Therefore, by taking $K_{q_0}:=R_{\epsilon}$ and $\tilde \rho:=e^\epsilon \rho$, we have that~\eqref{2032} holds.  In addition, observe that for $n\in \N$,
 $$
\|\tilde\rho_{\omega,n}\|_{L^{q_0}(\Omega, \mathcal F, \mathbb P)}=e^{\epsilon n}\|\rho_{\omega,n}\|_{L^{q_0}(\Omega, \mathcal F, \mathbb P)}\leq e^{-(a-\epsilon)n},
 $$
and thus~\eqref{summab} holds.
\end{proof}
\begin{remark}
As noted in the discussion after Lemma \ref{B lemma} in other circumstances 
    $\mathbb E[\rho_{\om,n}]\leq K(\om)b_n$ for some $K\in L^q$ (where $q$  is as in Lemma \ref{B lemma}) and a sequence $(b_n)$, which decays either polynomially fast or (possibly stretched) exponentially fast. In this case by using Lemma \ref{B lemma} with an appropriate sequence we will get that the left hand side of \eqref{2032} is bounded by 
    $$
K(\om)R(\om)a_n^{-1}b_n
    $$
    where $R(\om)$ comes from Lemma \ref{B lemma}. Note that $K\cdot R\in L^{q/2}$. Thus conditions \eqref{NewExpConv} and \eqref{rho norm} will hold with $q_0=q/2$ if $\sum_{n}a_n^{-1}b_n<\infty$.
\end{remark}

\section*{Acknowledgements}
D. D. was supported in part by Croatian Science Foundation under
the project IP-2019-04-1239 and by the University of Rijeka under the projects
uniri-prirod-18-9 and uniri-prprirod-19-16. D. D. would like to thank I. Melbourne,  M. Pollicott and  S. Vaienti  for useful discussions related to the content of this paper that happened in November 2019 at Luminy. In particular, he is grateful to I. Melbourne for explaining his work~\cite{KM}.
We would like to thank the referee for his/hers detailed reading of our paper.

\section{Appendix}
\begin{proposition}\label{ergodicity}
Suppose that for each continuous function $\varphi \colon M\to \R$ and $\epsilon >0$, there exists $\psi \in \mathcal H$ such that 
$\sup_{x\in M}|\varphi(x)-\psi(x)| \le \epsilon$. Then, $\mu$ given by~\eqref{egmu} is ergodic.
\end{proposition}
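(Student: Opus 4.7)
The plan is to prove that every $\tau$-invariant $F \in L^2(\mu)$ is $\mu$-a.e. constant.

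First I would reduce to the fiber-centered case. Set $\bar F(\omega) := \int_M F(\omega, \cdot)\,d\mu_\omega$. Using $F = F\circ\tau$ together with the equivariance \eqref{equi}, a short change-of-variable computation gives $\bar F(\omega) = \bar F(\sigma\omega)$ for $\mathbb P$-a.e. $\omega$. By ergodicity of $(\sigma, \mathbb P)$, $\bar F \equiv c$ almost surely for some constant $c$, so replacing $F$ with $F-c$ I may assume $\int F(\omega, \cdot)\,d\mu_\omega = 0$ for $\mathbb P$-a.e. $\omega$, i.e. $F$ is fiber-centered.

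Next I would use decay of correlations to show $F$ is orthogonal in $L^2(\mu)$ to every element of the linear span $\mathcal D_0$ of functions $(\omega, x) \mapsto \phi(\omega)\psi(x)$ with $\phi \in L^\infty(\Omega, \mathcal F, \mathbb P)$ and $\psi \in \mathcal H$. The key identity is that $\tau$-invariance of $F$ and $\mu$ gives $\langle F, \Phi\rangle_{L^2(\mu)} = \langle F, \Phi\circ\tau^n\rangle_{L^2(\mu)}$ for every $n$. Applying Lemma~\ref{Skew DEC} with the \emph{fiber-centered} factor being $\Psi = F$ (noting that $G_F(\omega) := \|F(\omega, \cdot)\|_{L^1(\mu_\omega)}$ lies in $L^2(\mathbb P)$ by Jensen, since $F\in L^2(\mu)$) and $\Phi(\omega,\cdot) \in \mathcal H$ fiberwise with uniformly bounded $\mathcal H$-norm, I get $|\langle F, \Phi\circ\tau^n\rangle| \le C\|A_n\|_{L^{q_0}} \to 0$ as $n \to \infty$, hence $\langle F, \Phi\rangle = 0$.

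Finally I would establish density of $\mathcal D_0$ in $L^2(\mu)$ and conclude $F = 0$. The chain of approximations is: simple functions $\sum_i \mathbf 1_{A_i \times B_i}$ are dense in $L^2(\mu)$; each $\mathbf 1_{B_i}$ is approximable in $L^2(\mu_M)$ (where $\mu_M(B):=\int\mu_\omega(B)\,d\mathbb P$) by a continuous function, by inner regularity of Borel probability measures; and each continuous function on $M$ is uniformly approximable by an element of $\mathcal H$ by the standing hypothesis. Together these yield $\mathcal D_0$ dense in $L^2(\mu)$, so $F \equiv 0$ and the original invariant function equals $c$ a.e.

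The main obstacle is making the decay-of-correlations step work for a general $L^2(\mu)$ function $F$ which is not itself in $\mathcal H$: the trick is to apply Lemma~\ref{Skew DEC} with the roles arranged so that $F$ appears as the $L^1$-fiber factor (where it merely needs to be fiber-centered) rather than the $\mathcal H$-factor, so that the regularity burden falls entirely on the test function $\Phi \in \mathcal D_0$. This implicitly requires the standard moment hypothesis $K \in L^r$ with $1/r + 1/q_0 \le 1/2$ on the constants in \eqref{NewExpConv}, which is already assumed throughout the paper.
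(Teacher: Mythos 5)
Your proposal is a genuinely different argument from the paper's, and it is essentially correct, but it proves a slightly weaker statement because it silently adds a hypothesis on $K$ that Proposition~\ref{ergodicity} does not assume.

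The paper works with a $\tau$-invariant \emph{set} $\mathcal C$, reduces to the case $\mathbb P(\Omega_0)=1$ where $\Omega_0=\{\omega:\mu_\omega(\mathcal C_\omega)>0\}$, and then argues \emph{fiberwise}: for each fixed $\omega\in\Omega_0$ it applies the quenched decay estimate Lemma~\ref{dec} with $\psi=\mathbf 1_{\mathcal C_{\sigma^{n_j}\omega}}$ along a subsequence $(n_j)$ with $A_{n_j}(\omega)\to 0$ (such a subsequence exists $\mathbb P$-a.s. by~\eqref{rho norm}), obtaining $\int_{\mathcal C_\omega}\varphi\,d\mu_\omega=0$ for centered $\varphi\in\mathcal H$ and hence, by the approximation hypothesis, $\mu_\omega(\mathcal C_\omega)=1$. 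Because Lemma~\ref{dec} is pointwise in $\omega$, no integrability of $K$ enters at all. Your approach replaces this with an annealed $L^2$-orthogonality argument based on Lemma~\ref{Skew DEC}, and as you yourself note this forces $K\in L^r$ with $\frac1r+\frac1{q_0}\le\frac12$ (so $r\ge 4$ when $q_0=4$). That is \emph{not} a standing assumption of Section~2 and is not in the statement of the proposition, so your proof covers strictly fewer cases than the paper's.

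There is also a small but real slip in the middle: your identity $\langle F,\Phi\rangle=\langle F,\Phi\circ\tau^n\rangle$ produces $\int F\cdot(\Phi\circ\tau^n)\,d\mu$, which to fit Lemma~\ref{Skew DEC} would require $F(\omega,\cdot)\in\mathcal H$ — exactly what you cannot assume. What you need, and what your concluding paragraph shows you actually intend, is the even simpler identity $\int F\Phi\,d\mu=\int (F\circ\tau^n)\Phi\,d\mu=\int\Phi\cdot(F\circ\tau^n)\,d\mu$, which uses only $\tau$-invariance of $F$ (not of $\mu$), and places $F$ in the $L^1$-fiber slot with $\Phi$ as the $\mathcal H$-regular test function. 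With that correction the orthogonality step works, with $p_1=\infty$, $p_2=2$. The reduction $\bar F(\sigma\omega)=\bar F(\omega)$ via~\eqref{equi} and the density chain (simple functions, then continuous functions via regularity of Borel measures on the metric space $M$, then $\mathcal H$ via the standing hypothesis) are fine. In short: a correct and arguably slicker annealed argument, at the cost of an extra moment assumption on $K$ that the paper's fiberwise argument avoids, plus one misstated (but easily repaired) identity.
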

\begin{proof}
We follow closely the proof of~\cite[Proposition 4.7]{MSU}.
Take a measurable $\mathcal C\subset \Omega \times M$ such that $\tau^{-1}(\mathcal C)=\mathcal C$. We need to show that $\mu (\mathcal C)\in \{0, 1\}$. For $\omega \in \Omega$, let
\[
\mathcal C_\omega:=\{x\in M: (\omega, x)\in \mathcal C\} \in \mathcal B.
\]
Observe that 
\[
x\in T_\omega^{-1}(\mathcal C_{\sigma \omega}) \iff T_\omega (x)\in \mathcal C_{\sigma \omega} \iff (\sigma \omega, T_\omega(x))\in \mathcal C \iff \tau(\omega, x)\in \mathcal C \iff (\omega, x)\in \mathcal C,
\]
which implies that 
\begin{equation}\label{eee}
    T_\omega^{-1}(\mathcal C_{\sigma \omega})=\mathcal C_\omega, \quad \omega \in \Omega.
\end{equation}
Set
\[
\Omega_0:=\{ \omega \in \Omega: \ \mu_\omega (\mathcal C_\omega)>0\}\in \mathcal F.
\]
By~\eqref{eee} we have that $\sigma(\Omega_0)=\Omega_0$. Since $\sigma$ is ergodic, we conclude that $\mathbb P(\Omega_0)\in \{0, 1\}$.  If $\mathbb P(\Omega_0)=0$, then clearly $\mu(\mathcal C)=0$. 

From now on we  suppose that $\mathbb P(\Omega_0)=1$. Without any loss of generality, we may suppose that the conclusion of Lemma~\ref{dec} holds for each $\omega \in \Omega_0$. Furthermore, in the view of~\eqref{rho norm} (which implies that $A_j\to 0$ in $L^q(\Omega, \mathcal F, \mathbb P))$,  we can assume that there is a subsequence $(n_j)_j$ of $\N$ such that $A_{n_j}(\omega)\to 0$ for $\omega \in \Omega_0$. We now claim that 
\begin{equation}\label{CLAIM}
\int_{\mathcal C_\omega}\varphi d\mu_\om=0, \quad \text{for $\omega \in \Omega_0$ and $\varphi \in \mathcal H$ such that $\int_M\varphi \, d\mu_\omega=0$.}
\end{equation}
Indeed, \eqref{CLAIM} follows immediately from Lemma~\ref{dec} applied for $n=n_j$ and with $\psi_n=\mathbf 1_{\mathcal C_{\sigma^n \omega}}$, by passing to the limit when $j\to \infty$. Using the assumption in the statement of the proposition together with the density of continuous functions in $L^1(\mu_\omega)$, one can conclude that~\eqref{CLAIM} holds also for $\varphi \in L^1(\mu_\omega)$. This yields that $\mu_\omega(\mathcal C_\omega)=1$ for $\omega \in \Omega$, and consequently $\mu(\mathcal C)=1$.
\end{proof}

\bibliographystyle{amsplain}

\end{document}